\documentclass[reqno]{amsart}
\usepackage{yhmath,amsmath,amsfonts,amssymb,enumerate,amsthm,appendix,caption,lipsum,}
\usepackage{enumitem}
\usepackage[english]{babel}
\usepackage{cmap,mathtools}
\usepackage{graphics} 
\usepackage{epsfig} 
\usepackage{graphicx}\usepackage{epstopdf}
\usepackage[pagewise]{lineno}

\usepackage{a4wide}
\setlength{\parskip}{0.5em}
\usepackage[margin=0.96in]{geometry}
\usepackage{cite}
\usepackage[utf8]{inputenc}
\usepackage{xcolor}
\usepackage{hyperref}
\usepackage[hyperpageref]{backref}
\hypersetup{
    colorlinks=true,
    linkcolor=myblue,
    filecolor=magenta,      
    urlcolor=mygreen,
    citecolor=mygreen,
}
\definecolor{mygreen}{rgb}{0.01,0.6,0.2}
\definecolor{myblue}{rgb}{0.01, 0.18, 1.0}
\parskip.20cm

\newtheorem{theorem}{Theorem}
\newtheorem{proposition}[theorem]{Proposition}
\newtheorem{lemma}[theorem]{Lemma}

\theoremstyle{definition}
\newtheorem{definition}[theorem]{Definition}
\newtheorem{remark}[theorem]{Remark}

\numberwithin{equation}{section}
\numberwithin{theorem}{section}
\numberwithin{equation}{section}
\numberwithin{theorem}{section}
\title[Fractional system of NLS-KDV equations
 with Hardy potential]{On a fractional system of NLS-KDV
equations with Hardy potentials}
\author[R. Kumar, T. Mukherjee \& A. Sarkar]{Rohit Kumar$^1$, Tuhina Mukherjee$^{1}$ and Abhishek Sarkar$^{1,*}$}
\subjclass{35R11, 47G30.}
\keywords{Fractional Laplacian, coupled system, variational methods, fractional-Hardy potential, concentration-compactness}

\thanks{$^*$Corresponding author.}

\begin{document}

\maketitle 
\centerline{$^{1}$Department of Mathematics, Indian Institute of Technology Jodhpur,}
\centerline{Rajasthan 342030, India}
 \begin{abstract}
  In this article, our main concern is to study the existence of bound and ground state solutions for the following fractional system of nonlinear Schr\"odinger-Korteweg-De Vries (NLS-KdV, in short) equations with Hardy potentials:
  \begin{equation*}
    \left\{
     \begin{aligned}
          (-\Delta)^{s_{1}} u - \lambda_{1} \frac{u}{|x|^{2s_{1}}} - u^{2_{s_{1}}^{*}-1} &= 2\nu  h(x) u^{}v^{} & \quad \mbox{in} ~ \mathbb{R}^{N}, \\
             (-\Delta)^{s_{2}} v - \lambda_{2} \frac{v}{|x|^{2s_{2}}} - v^{2_{s_{2}}^{*}-1} &= \nu h(x) u^{2} & \quad \mbox{in} ~ \mathbb{R}^{N},\\
             u,v >0 \quad \mbox{in} ~ \mathbb{R}^{N} \setminus \{0\},
    \end{aligned}
    \right.
\end{equation*}
 where $s_{1},s_{2} \in (0,1)~\text{and}~\lambda_{i}\in (0, \Lambda_{N,s_{i}})$ with $\Lambda_{N,s_{i}} = 2 \pi^{N/2} \frac{\Gamma^{2}(\frac{N+2s_i}{4}) \Gamma(\frac{N+2s_i}{2})}{\Gamma^{2}(\frac{N-2s_i}{4}) ~|\Gamma(-s_{i})|}, (i=1,2)$.
By imposing certain assumptions on the parameter $\nu$ and on the function $h$, we obtain ground-state solutions using the concentration-compactness principle and the mountain-pass theorem. 
 \end{abstract} 
\section{Introduction}\label{S1}
The study of elliptic equations and systems involving fractional Laplacian is attracting many researchers over the last decade. The keen aspect of studying such equations is due to physical models in many different applications, e.g., geostrophic flows, crystal dislocation, water waves, etc. we refer to \cite{ Bisci2016variational, Bucur2016nonlocal,Dipierro2017book} and the references therein for more details. In this article, we are concerned with the system of fractional NLS-KDV equations with singular Hardy potential and coupled with a parameter $\nu$ on the entire $\mathbb{R}^N$ given below
\begin{equation} \label{main problem}
    \left\{
     \begin{aligned}
          (-\Delta)^{s_{1}} u - \lambda_{1} \frac{u}{|x|^{2s_{1}}} - u^{2_{s_{1}}^{*}-1} &= 2\nu  h(x) u^{}v^{} & \quad \mbox{in} ~ \mathbb{R}^{N},\\
        (-\Delta)^{s_{2}} v - \lambda_{2} \frac{v}{|x|^{2s_{2}}} - v^{2_{s_{2}}^{*}-1} &= \nu h(x) u^{2} & \quad \mbox{in} ~ \mathbb{R}^{N},\\
             u,v >0 \quad \mbox{in} ~ \mathbb{R}^{N} \setminus \{0\},
    \end{aligned}
    \right.
\end{equation}
where $s_{1},s_{2} \in (0,1)~\text{and}~\lambda_{i}\in (0, \Lambda_{N,s_{i}})$ with $\Lambda_{N,s_{i}} = 2 \pi^{N/2} \frac{\Gamma^{2}(\frac{N+2s_i}{4}) \Gamma(\frac{N+2s_i}{2})}{\Gamma^{2}(\frac{N-2s_i}{4}) ~|\Gamma(-s_{i})|}, (i=1,2)$. The constant $\Lambda_{N,s_{i}}, (i=1,2)$ is an optimal constant for the fractional Hardy inequality \cite[Theorem 1.1]{Frank2008}. The parameter $\nu$ is positive and  $2_{s_i}^{*} = \frac{2N}{N-2s_i},$ $(2s_i < N \text{ and }i=1,2)$ is the fractional critical Sobolev exponent. Further, we assume that
\begin{align} \label{ alpha beta condition}
    \max\{2s_{1},2s_{2}\}< N \leq \min\{6s_{1},6s_{2}\},
\end{align}
 and $h$ is a  function defined on $\mathbb{R}^{N}$ satisfying  
 \begin{align} \label{condition on h}
    0<h \in L^{1}(\mathbb{R}^{N})\cap L^{\infty}(\mathbb{R}^{N}).
 \end{align}
\noindent For $s_1=s_2=1$, the system similar to \eqref{main problem} is given as follows:
\begin{equation} \label{classical problem}
    \begin{cases}
        \begin{aligned}
            -\Delta u - \lambda_{1} \frac{u~}{|x|^{2}} - u^{2^{*}-1} = \nu \alpha h(x) |u|^{\alpha-2}|v|^{\beta}u &  \mbox{ in } \mathbb{R}^{N}, \\
            -\Delta v - \lambda_{2} \frac{v~}{|x|^{2}} - v^{2^{*}-1} = \nu \beta h(x) |u|^{\alpha}|v|^{\beta-2}v &  \mbox{ in }  \mathbb{R}^{N}.
        \end{aligned}
    \end{cases}
\end{equation}
 Observe that for $\alpha=2$ and $\beta=1$, the system \eqref{main problem} is a fractional counterpart of \eqref{classical problem}. For $\nu=0$, the system \eqref{classical problem} becomes a single nonlinear elliptic equation and  the author in  \cite[Terracini]{Terracini1996} discussed the existence of positive solutions as well as their qualitative properties. In 2009, Abdellaoui et al. (see \cite{Abdellaoui2009}) dealt with the local system \eqref{classical problem}, and they obtained the existence of positive ground state solutions depending on the parameter $\nu>0$ (large or small) and the non-negative function $h(x) \in L^{1}(\mathbb{R}^N) \cap L^{\infty}(\mathbb{R}^N)$ for $2<\alpha + \beta < 2^*$ and $h(x) \in L^{\infty}(\mathbb{R}^N)$ for $\alpha + \beta = 2^*$. Later in  2014, Kang \cite{Kang2014} proved the existence of a positive solution to \eqref{classical problem} considering $ h(x),\lambda_{1}(x),\lambda_{2}(x) \in C(\mathbb{R}^N)$ with additional assumptions. In 2015, Chen and Zou \cite{Chen2015classical} dealt with the critical case i.e., $\alpha + \beta = 2^*$ with $h(x)=1$, and proved the existence of positive solutions which are radially symmetric. Further, Zhong and Zou \cite{Zhong2015critical} proved the existence of ground state solutions by allowing $h(x)$ to change its sign with coupling parameter $\nu =1$. Recently Colorado et al. \cite{Colorado2022} studied the problem \eqref{classical problem} with $\alpha+\beta \leq 2^*$ and $0\leq h(x) \in L^{\infty}(\mathbb{R}^{N})$, and obtained positive ground and bound state solutions depending on the behaviour of parameter $\nu>0$.
 
For $s_1 =s_2=s$ and $\nu=0$, the system \eqref{main problem} reduces to a fractional doubly critical equation 
 \begin{equation} \label{single problem}
    (-\Delta)^{s_{}} u - \lambda_{} \frac{u~~}{|x|^{2s}} = u^{2_{s_{}}^{*}-1} \quad  \text{in}~ \mathbb{R}^N.
\end{equation}
In 2016 Dipierro et al. in their paper \cite[Theorem 1.5]{Dipierro2016} proved the existence of a positive solution using a variational approach for any $0\leq \lambda < \Lambda_{N,s}$. Moreover, they used the moving plane method to obtain the qualitative behavior (such as radial symmetry, asymptotic behaviors, etc.) of solutions of \eqref{single problem}.
 In 2020, He and Peng \cite{He2020} considered the following fractional system in $\mathbb{R}^{N}$
\begin{equation} \label{Qihan}
    \left\{
        \begin{array}{ll}
            (-\Delta)^{s} u + P(x)u - \mu_{1}|u|^{2p-2}u = \beta  |v|^{p}|u|^{p-2}u & \quad \text{in} ~ \mathbb{R}^{N}, \\
            (-\Delta)^{s} v + Q(x)u - \mu_{2}|v|^{2p-2}v = \beta  |u|^{p}|v|^{p-2}v & \quad \text{in} ~ \mathbb{R}^{N},\\
             u,v \in H^{s}(\mathbb{R}^{N}), 
        \end{array}
    \right.
\end{equation}
where $N\geq 2,~ 0 < s < 1,~ 1 < p < \frac{N}{N-2s} ,~ \mu_{1}>0,~ \mu_{2}>0$ and $\beta \in \mathbb{R}$ is a coupling constant, and $P(x),Q(x)$ are continuous bounded radial functions. The authors used variational methods to obtain the existence of infinitely many non-radial positive solutions. Observe that the above system contains only the subcritical nonlinear terms and coupled terms up to subcritical power. Recently, Shen \cite{Shen2022brezis} considered the following fractional elliptic systems with Hardy-type singular potentials and coupled by critical homogeneous nonlinearities on the bounded domain $\Omega \subset \mathbb{R}^{N}$
\begin{equation} \label{Bounded domain problem}
    \begin{cases}
            (-\Delta)^{s} u - \lambda_{1}  \frac{u}{|x|^{2s}}  - |u|^{2_{s}^{*}-2}u = \frac{n\alpha}{2_{s}^{*}} |u|^{\alpha-2}u|v|^{\beta}+ \frac{1}{2}Q_{u}(u,v) & \text{ in} ~ \Omega, \\
            (-\Delta)^{s}v - \lambda_{2}  \frac{v}{|x|^{2s}}  - |v|^{2_{s}^{*}-2}v = \frac{n\beta}{2_{s}^{*}} |u|^{\alpha}|v|^{\beta-2}v + \frac{1}{2}Q_{v}(u,v) &  \text{ in} ~ \Omega,\\
             u=v=0 ~~\text{in}~~\mathbb{R}^{N} \backslash \Omega,
     \end{cases}
\end{equation}
where $\lambda_{1}, \lambda_{2} \in (0, \Lambda_{N,s})$ and $2_{s}^{*} = \frac{2N}{N-2s}$ is the fractional critical Sobolev exponent. The existence of positive solutions to the systems through variational methods was ascertained for the critical case, i.e., $\alpha+\beta = 2_{s}^{*}$ on the bounded domain $\Omega$. Recently the authors in \cite{RMS2022} established the existence of positive bound and ground state solutions for the following system:
\begin{equation} \label{Generalized}
    \left\{
    \begin{aligned}
         (-\Delta)^{s_{1}} u - \lambda_{1} \frac{u~~}{|x|^{2s_{1}}} - u^{2_{s_{1}}^{*}-1} = \nu \alpha h(x) u^{\alpha-1}v^{\beta} & \quad \mbox{in} ~ \mathbb{R}^{N}, \\
            (-\Delta)^{s_{2}} v - \lambda_{2} \frac{v~~}{|x|^{2s_{2}}} - v^{2_{s_{2}}^{*}-1} = \nu \beta h(x) u^{\alpha}v^{\beta-1} & \quad \mbox{in} ~ \mathbb{R}^{N},\\
             u,v >0 \quad \mbox{in} ~ \mathbb{R}^{N} \setminus \{0\},
    \end{aligned}
    \right.
\end{equation}
with $\alpha,\beta>1~~\text{and}~~\alpha+\beta \leq \min \{ 2_{s_{1}}^{*},2_{s_{2}}^{*}\}$.
Considering the case $\alpha=2$ and $\beta=1$, and $s_1=s_2=1$, 
 the existence of bound and ground state solutions of \eqref{Generalized} is obtained by Colorado et al. \cite{Colorado2021bound} for $0 \leq h(x) \in L^{\infty}(\mathbb{R}^{N})$.  The above case has not been dealt with in the fractional system literature (to the best of the authors' knowledge). Therefore, in this article, we are interested in studying the system \eqref{Generalized} when $\alpha=2$ and $\beta=1$.
We are interested in finding the existence of positive solutions to the system (\ref{main problem}). The lack of compactness due to the nonlinearities in the source terms and the singular Hardy potential terms make it delicate to employ the variational methods to the problem \eqref{main problem}. To deal with such kind of non-compactness, we will use the concentration compactness principle for fractional problems in unbounded domains considered in \cite[Bonder et al.]{Bonder2018}, \cite[Chen et al.]{Chen2018} and \cite[Pucci and Temperini]{Pucci2021}, and the Mountain pass theorem. These concentration compactness principles are fractional analogous to the celebrated concentration compactness principles discussed by P. L. Lions \cite{lions1985Partone,lions1985Parttwo}. Next, we recall the following definition.
 {
\begin{definition}[Bound and Ground State Solution]
If $(u_1,v_1) \in \mathbb{D} \backslash \{(0, 0)\}$ is a critical point of $J_\nu$ over $\mathbb{D}$, then we say that the pair $(u_1,v_1)$ is a bound state solution of \eqref{main problem}. This bound state solution $(u_{1},v_{1})$ is said to be a ground state solution if its energy is minimal among all the bound state solutions i.e.
 \begin{equation} \label{ground state level}
     c_{\nu} = J_{\nu}(u_{1},v_{1}) = \min \{ J_{\nu}(u,v): (u, v) \in \mathbb{D} \backslash \{(0, 0)\} ~\text{and}~ J'_{\nu}(u,v)=0 \}.
 \end{equation}
\end{definition} 
\noindent The following hypotheses are required to prove our main results,
\begin{equation} \label{C}
\begin{split}
     & \text{Either}  \max\{2s_{1},2s_{2}\}< N < \min\{6s_{1},6s_{2}\}~\text{and}~h~\text{satisfies}~(\ref{condition on h})\\
    &~~~~~~~~~~~~~~~~~~ \text{or} \\
    & N = \min\{6s_1, 6s_2\}~\text{and}~h~\text{is radial and satisfies}~(\ref{H one})   \hspace{1cm}
\end{split} 
\end{equation}
In the following theorem, we prove the existence of a positive ground state solution of (\ref{main problem}) when the correlation parameter $\nu$ is very large and $h$ satisfies the assumption (\ref{C}).
\begin{theorem} \label{First theorem}
Assume that $\nu > \bar{\nu}$ defined by \eqref{ Nu Bar}. If the hypothesis (\ref{C}) is satisfied, then a positive ground state solution to the system (\ref{main problem}) exists. 
\end{theorem}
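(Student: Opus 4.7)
The plan is to obtain the ground state through a mountain-pass argument on the product space $\mathbb{D} = D^{s_1,2}(\mathbb{R}^N) \times D^{s_2,2}(\mathbb{R}^N)$, endowed with the natural Hardy-perturbed inner product for which the equivalent norm is $\|(u,v)\|_{\mathbb{D}}^2 = \|u\|_{s_1,\lambda_1}^2 + \|v\|_{s_2,\lambda_2}^2$, the positivity of each factor being guaranteed by $\lambda_i \in (0,\Lambda_{N,s_i})$ via the fractional Hardy inequality. The functional associated with \eqref{main problem} is
\begin{equation*}
J_\nu(u,v) = \tfrac{1}{2}\|(u,v)\|_{\mathbb{D}}^2 - \tfrac{1}{2^*_{s_1}}\int_{\mathbb{R}^N}(u^+)^{2^*_{s_1}}\,dx - \tfrac{1}{2^*_{s_2}}\int_{\mathbb{R}^N}(v^+)^{2^*_{s_2}}\,dx - \nu\int_{\mathbb{R}^N} h(x)(u^+)^2 v^+\,dx.
\end{equation*}
The first step is the standard verification of mountain-pass geometry: the quadratic term dominates near the origin thanks to the Hardy-Sobolev embedding ($h \in L^\infty$ controls the cubic coupling by interpolation), while along any ray $(tu_0, tv_0)$ the critical terms drive $J_\nu \to -\infty$ as $t \to \infty$. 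This yields the minimax level $c_\nu$ and an associated Palais--Smale sequence $\{(u_n,v_n)\}$ via Ekeland, whose boundedness follows in a routine way from $J_\nu(u_n,v_n) - \tfrac{1}{2^*_{\min}} \langle J_\nu'(u_n,v_n),(u_n,v_n)\rangle$.

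The decisive step is to locate $c_\nu$ strictly below the compactness threshold dictated by the concentration-compactness principles of Bonder--Saintier--Silva, Chen--Liu--Zheng, and Pucci--Temperini. Writing $S_{\lambda_i,s_i}$ for the best constant of the Hardy-critical embedding, the critical level is
\begin{equation*}
c^* = \min\left\{ \tfrac{s_1}{N} S_{\lambda_1,s_1}^{N/(2s_1)},\ \tfrac{s_2}{N} S_{\lambda_2,s_2}^{N/(2s_2)} \right\},
\end{equation*}
below which the PS sequence cannot lose mass either at $0$ (Hardy singularity) or at infinity (critical Sobolev exponent). To get $c_\nu < c^*$, I would test $J_\nu$ on the cone $\{(tU_{\lambda_1,s_1},\tau U_{\lambda_2,s_2}) : t,\tau > 0\}$, where $U_{\lambda_i,s_i}$ is an extremal of the Hardy-critical inequality (a positive solution of the single equation \eqref{single problem}). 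The coupling term contributes $-\nu \cdot t^2 \tau \cdot \int h(x) U_{\lambda_1,s_1}^2 U_{\lambda_2,s_2}\,dx$, and a two-parameter maximization shows that $\max_{t,\tau > 0} J_\nu(tU_{\lambda_1,s_1},\tau U_{\lambda_2,s_2}) \to 0$ as $\nu \to \infty$; the threshold $\bar\nu$ in \eqref{ Nu Bar} is precisely the value at which this maximum equals $c^*$. The hypothesis \eqref{C} plays an essential role here: condition $N < \min\{6s_1,6s_2\}$ ensures $\int h(x) U_{\lambda_1,s_1}^2 U_{\lambda_2,s_2}\,dx$ is finite for merely $L^1 \cap L^\infty$ weights, whereas in the borderline case $N = \min\{6s_1,6s_2\}$ the extra decay demanded by $(\ref{H one})$ on the radial $h$ is exactly what is needed to control the integral.

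Once $c_\nu < c^*$ is secured, I would apply the concentration-compactness principles to the bounded PS sequence: the associated measures $|(-\Delta)^{s_i/2}u_n|^2\,dx$, $|u_n|^{2^*_{s_i}}\,dx$ and the Hardy measures $|u_n|^2|x|^{-2s_i}\,dx$ decompose into a regular part plus point masses at $0$ and at infinity. A direct energy balance, combined with the strict inequality $c_\nu < c^*$, forces all concentration coefficients to vanish, so $(u_n,v_n) \to (u_1,v_1)$ strongly in $\mathbb{D}$. The limit is a critical point at level $c_\nu > 0$, hence nontrivial. Positivity of each component follows because $J_\nu$ involves $u^+, v^+$ and the coupling is linear in each variable with non-negative coefficient; testing against the negative parts shows $u_1^-=v_1^-=0$, after which the strong maximum principle for the fractional Hardy operator (valid since $\lambda_i < \Lambda_{N,s_i}$) yields $u_1,v_1 > 0$ on $\mathbb{R}^N \setminus \{0\}$. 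Finally, the ground state property is verified by showing that any non-trivial critical point lies above the mountain-pass level (a Nehari-type argument), so $c_\nu$ coincides with the infimum in \eqref{ground state level}. The main obstacle I anticipate is the third paragraph's strict level estimate, because the cross-interaction integral of three Hardy extremals with different exponents must be delicately estimated under the borderline dimensional constraint \eqref{ alpha beta condition}.
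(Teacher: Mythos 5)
Your overall architecture (bounded PS sequence, concentration--compactness with possible atoms at the $x_j$, at $0$ and at infinity, the compactness threshold $c^*=\min\{\tfrac{s_1}{N}S^{N/(2s_1)}(\lambda_1),\tfrac{s_2}{N}S^{N/(2s_2)}(\lambda_2)\}$, positivity via negative parts and the fractional maximum principle) matches the paper, but the decisive step --- placing the minimax level strictly below $c^*$ \emph{for every} $\nu>\bar\nu$ --- has a genuine gap. You identify $\bar\nu$ as ``the value at which the two-parameter maximum of $J_\nu$ over the cone $\{(tU_{\lambda_1,s_1},\tau U_{\lambda_2,s_2})\}$ equals $c^*$.'' That is not what \eqref{ Nu Bar} is: the paper defines
\begin{equation*}
\bar\nu=\inf_{\phi\in\mathcal{D}^{s_1,2}(\mathbb{R}^N),\,\phi\not\equiv 0}\frac{\|\phi\|_{\lambda_1,s_1}^2}{2\int_{\mathbb{R}^N}h(x)\,\phi^2\,z_{\mu,s_2}^{\lambda_2}\,dx},
\end{equation*}
which is the threshold at which the second variation of $J_\nu$ at the \emph{semitrivial critical point} $(0,z_{\mu,s_2}^{\lambda_2})$ acquires a negative direction $(\phi_1,0)$ tangent to $\mathcal{N}_\nu$ (Proposition \ref{Semitrivial solution as a minimizer}): for $\nu<\bar\nu$ that point is a local minimum of $J_\nu|_{\mathcal{N}_\nu}$, for $\nu>\bar\nu$ it is a saddle. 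The paper's level estimate is then purely local and immediate: since $(0,z_{\mu,s_2}^{\lambda_2})$ is a saddle on $\mathcal{N}_\nu$ and $(z_{\mu,s_1}^{\lambda_1},0)$ is never a constrained critical point, $\tilde c_\nu=\inf_{\mathcal{N}_\nu}J_\nu<\min\{J_\nu(z_{\mu,s_1}^{\lambda_1},0),J_\nu(0,z_{\mu,s_2}^{\lambda_2})\}=c^*$, and Lemma \ref{PS compactness lemma} (resp.\ Lemma \ref{critical with h radial}(i) in the borderline case) applies. Your global computation on the extremal cone only shows, as you yourself state, that the test level tends to $0$ as $\nu\to\infty$; this proves existence for $\nu$ \emph{sufficiently large}, an unquantified range that need not contain $(\bar\nu,\infty)$, so it does not prove the theorem as stated. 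To repair it you would have to either carry out the second-order analysis at $(0,z_{\mu,s_2}^{\lambda_2})$ or show that your two-parameter maximum drops below $c^*$ exactly at \eqref{ Nu Bar}, which is not true in general.

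Two smaller points. First, the ground state is obtained in the paper by minimization on the Nehari manifold, not by a mountain pass from the origin; your remark that the mountain-pass level coincides with the Nehari infimum does hold here, but only because the fibering map $\psi(t)=J_\nu(tu,tv)$ satisfies $\psi'''<0$ (a consequence of $N\le\min\{6s_1,6s_2\}$, i.e.\ $2^*_{s_i}\ge 3$), and that verification is doing real work that you should not omit. Second, the role of \eqref{H one} in the case $N=\min\{6s_1,6s_2\}$ is not the finiteness of $\int h\,U_{\lambda_1,s_1}^2U_{\lambda_2,s_2}\,dx$ (H\"older with $h\in L^\infty$ already gives that), but rather $h(0)=\lim_{|x|\to\infty}h(x)=0$, which is what kills the coupling term's contribution to the concentration at $0$ and at infinity in the concentration--compactness step.
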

\noindent Further, we show that the order relation between the Hardy-Sobolev constants $S^{\frac{N}{2s_{2}}}(\lambda_{2})$ and $S^{\frac{N}{2s_{1}}}(\lambda_{1})$ plays an important role in order to prove the existence of a positive ground state solution.
\begin{theorem} \label{second theorem}
Assume (\ref{C}). If $S^{\frac{N}{2s_{2}}}(\lambda_{2}) \geq S^{\frac{N}{2s_{1}}}(\lambda_{1}), s_{2} \geq s_{1}$, then system (\ref{main problem}) admits a positive ground state $(\Tilde{u}, \Tilde{v}) \in \mathbb{D}$.
\end{theorem}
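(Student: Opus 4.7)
The plan is to produce a ground state by applying the mountain pass theorem to the natural energy functional $J_\nu$ on $\mathbb{D}$, and to recover compactness by certifying that the mountain pass level sits strictly below the threshold at which bubbling occurs. The hypotheses $s_2\ge s_1$ and $S^{N/(2s_2)}(\lambda_2)\ge S^{N/(2s_1)}(\lambda_1)$ enter precisely to identify that threshold and to tune a test pair that realizes a subcritical level.

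First I would verify the mountain pass geometry of $J_\nu$: the fractional Hardy--Sobolev inequality combined with \eqref{condition on h} yields $J_\nu(u,v)\ge\delta>0$ on a small sphere, while dilating any fixed positive pair sends $J_\nu$ to $-\infty$, producing a minmax value $c_\nu>0$ and, via Ekeland, a bounded Palais--Smale sequence at level $c_\nu$. The natural compactness threshold furnished by the fractional concentration-compactness principles of \cite{Bonder2018,Chen2018,Pucci2021} is the minimum of the semi-trivial energies $c_i^{\infty}=\tfrac{s_i}{N}\,S(\lambda_i)^{N/(2s_i)}$, $i=1,2$, coming from the decoupled critical Hardy equation \eqref{single problem}. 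Since $s_2\ge s_1$ and $S^{N/(2s_2)}(\lambda_2)\ge S^{N/(2s_1)}(\lambda_1)$, one has $c_1^{\infty}\le c_2^{\infty}$, so it suffices to prove
\[
c_\nu<c_1^{\infty}.
\]

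The crucial technical step is to construct a mountain pass path along which $J_\nu$ stays below $c_1^{\infty}$. I would use a rescaled family $U_{\varepsilon}$ of fractional Hardy--Sobolev extremals realizing $S(\lambda_1)$ and concentrating at the origin, paired with a fixed positive test function $\psi$ in the $s_2$-Sobolev space chosen so that $\int h(x)\,U_{\varepsilon}^{2}\,\psi$ is bounded below by a positive constant uniformly in $\varepsilon$. Computing $\max_{t>0}J_\nu(tU_{\varepsilon},t\psi)$ and comparing it with $c_1^{\infty}$, the expansion takes the schematic form
\[
\max_{t>0}J_\nu(tU_{\varepsilon},t\psi)=c_1^{\infty}-\nu\cdot(\text{positive coupling gain})+(\text{Hardy--Sobolev error}),
\]
and the dimensional restriction $N\le 6s_i$ in \eqref{C} is designed exactly so that the Hardy--Sobolev error is of strictly lower order than the coupling gain; in the borderline case $N=\min\{6s_1,6s_2\}$ the radiality hypothesis \eqref{H one} on $h$ prevents the coupling integral from degenerating.

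Once $c_\nu<c_1^{\infty}$ is secured, the argument closes in a standard fashion. Applied to the Palais--Smale sequence at $c_\nu$, the fractional concentration-compactness lemmas rule out vanishing (by strict positivity of $c_\nu$), dichotomy (by strict subadditivity below $c_1^{\infty}$), and concentration at the origin or at infinity (again by the strict energy bound), forcing strong convergence to a critical point $(\tilde u,\tilde v)\in\mathbb{D}\setminus\{(0,0)\}$. Both components must be nontrivial, since a semi-trivial limit would carry energy at least $c_1^{\infty}$, contradicting $c_\nu<c_1^{\infty}$; the fractional strong maximum principle then yields positivity in $\mathbb{R}^{N}\setminus\{0\}$, and minimality of $c_\nu$ among the critical levels identifies $(\tilde u,\tilde v)$ as the desired ground state. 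The main obstacle is the energy estimate of the third paragraph: balancing the asymmetric coupling $h(x)u^{2}v$ against two critical terms with distinct exponents $2_{s_1}^{*}\neq 2_{s_2}^{*}$ and no common scaling requires a delicate asymptotic analysis in which the coupling gain dominates both the Hardy--Sobolev remainder and the lower-order critical correction, and this is exactly where the order relations $s_2\ge s_1$ and $S^{N/(2s_2)}(\lambda_2)\ge S^{N/(2s_1)}(\lambda_1)$ are indispensable.
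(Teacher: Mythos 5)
Your skeleton --- reduce everything to showing that the relevant minimization level lies strictly below the compactness threshold, which under $s_2\ge s_1$ and $S^{N/(2s_2)}(\lambda_2)\ge S^{N/(2s_1)}(\lambda_1)$ equals $\frac{s_1}{N}S^{N/(2s_1)}(\lambda_1)$, then invoke concentration--compactness and the maximum principle --- matches the paper. The gap is in how you propose to obtain the strict inequality. Your expansion
\[
\max_{t>0}J_\nu(tU_\varepsilon,t\psi)=c_1^{\infty}-\nu\cdot(\text{coupling gain})+(\text{Hardy--Sobolev error})
\]
omits the contribution $\tfrac{t^2}{2}\|\psi\|_{\lambda_2,s_2}^{2}-\tfrac{1}{2^{*}_{s_2}}t^{2^{*}_{s_2}}\|\psi\|_{2^{*}_{s_2}}^{2^{*}_{s_2}}$ of the second component, which near the maximizing $t$ pushes the level \emph{up} and must itself be beaten by the cubic coupling term; as written nothing guarantees this unless $\psi$ is rescaled small so that this term is second order against the first-order coupling gain. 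Moreover there is no Hardy--Sobolev error to control: $z^{\lambda_1}_{\mu,s_1}$ solves the limiting single equation exactly on all of $\mathbb{R}^N$, no cut-off is introduced, and no concentration parameter $\varepsilon\to0$ is needed. Consequently the restriction $N\le 6s_i$ is not used for any such error term (its role is to make the cubic coupling subcritical, $3\le\min\{2^{*}_{s_1},2^{*}_{s_2}\}$, giving coercivity of $J_\nu$ on $\mathcal{N}_\nu$ and boundedness of (PS) sequences), and the hypotheses $s_2\ge s_1$, $S^{N/(2s_2)}(\lambda_2)\ge S^{N/(2s_1)}(\lambda_1)$ play no part in any asymptotic balancing: their only job is to identify $\frac{s_1}{N}S^{N/(2s_1)}(\lambda_1)$ as the smaller of the two semitrivial levels, i.e.\ as the level below which compactness must be certified.

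The paper obtains the strict inequality structurally, for every $\nu>0$ and with no test-function computation: the pair $(z^{\lambda_1}_{\mu,s_1},0)$ lies on $\mathcal{N}_\nu$ but is not a critical point of $J_\nu$ (nor of $J_\nu|_{\mathcal{N}_\nu}$), because the asymmetric NLS--KdV coupling makes the second equation fail, namely $\langle J_\nu'(z^{\lambda_1}_{\mu,s_1},0)\,|\,(0,\psi)\rangle=-\nu\int_{\mathbb{R}^N} h\,(z^{\lambda_1}_{\mu,s_1})^{2}\psi\,\mathrm{d}x<0$ for $0\le\psi\not\equiv0$. Hence $\tilde c_\nu=\inf_{\mathcal{N}_\nu}J_\nu<J_\nu(z^{\lambda_1}_{\mu,s_1},0)=\frac{s_1}{N}S^{N/(2s_1)}(\lambda_1)$, and Lemma \ref{PS compactness lemma} (resp.\ Lemma \ref{critical with h radial}(i) when $N=\min\{6s_1,6s_2\}$ and $h$ is radial) yields convergence of a minimizing (PS) sequence on $\mathcal{N}_\nu$; nontriviality of both components and positivity then follow as in Theorem \ref{First theorem}. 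This is in effect the correct repair of your estimate: the first-order loss from the coupling beats the second-order gain from turning on the $v$-component. Finally, a mountain pass run on all of $\mathbb{D}$ does not by itself produce a \emph{ground} state; the paper minimizes on the Nehari manifold precisely so that the limit realizes \eqref{ground state level}.
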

\noindent If the correlation parameter $\nu$ is sufficiently small and the order relation between the Hardy-Sobolev constants $S^{\frac{N}{2s_{2}}}(\lambda_{2})$ and $S^{\frac{N}{2s_{1}}}(\lambda_{1})$ is strict, then we can say something more about the semi-trivial solutions i.e., the solutions of type $(0, z_{\mu,s_{2}}^{\lambda_{2}})$ are ground state solutions.
\begin{theorem} \label{third theorem}
Assume (\ref{C}). If $S^{\frac{N}{2s_{1}}}(\lambda_{1})> S^{\frac{N}{2s_{2}}}(\lambda_{2}), s_{1} \geq s_{2}$, then there exists $\nu_0>0$ such that for any $0<\nu<\nu_0$ the pair $(0, z_{\mu,s_{2}}^{\lambda_{2}})$ is a ground state of (\ref{main problem}).
\end{theorem}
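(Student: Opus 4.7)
\textbf{Proof proposal for Theorem \ref{third theorem}.}

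The plan is to show that the semi-trivial energy level
\[
c^{\star} := J_\nu(0, z_{\mu,s_2}^{\lambda_2}) = \tfrac{s_2}{N}\, S^{N/(2s_2)}(\lambda_2)
\]
is a lower bound for $J_\nu$ over all nontrivial critical points once $\nu$ is small. Since $(0, z_{\mu,s_2}^{\lambda_2})$ is itself a critical point of $J_\nu$ (the first equation collapses to $0=0$ and the second is exactly the single fractional Hardy--Sobolev equation solved by $z_{\mu,s_2}^{\lambda_2}$), this identifies it as a ground state. The easy sub-cases go as follows. The choice $v \equiv 0$ is incompatible with nontriviality, since the second equation of \eqref{main problem} would force $\nu h u^2 \equiv 0$ and hence $u \equiv 0$. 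If instead $u \equiv 0$ and $v \not\equiv 0$, then $v$ solves the single fractional Hardy--Sobolev equation, so the Nehari identity $\|v\|_{s_2,\lambda_2}^2 = \int v^{2_{s_2}^{*}}$ combined with the sharp Hardy--Sobolev inequality yields $\int v^{2_{s_2}^{*}} \ge S^{N/(2s_2)}(\lambda_2)$, whence $J_\nu(0,v) = \tfrac{s_2}{N}\int v^{2_{s_2}^{*}} \ge c^{\star}$.

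The substantive case is $u, v > 0$. Testing the two equations of \eqref{main problem} against $u$ and $v$ respectively and writing $B := \int u^{2_{s_1}^{*}}$, $D := \int v^{2_{s_2}^{*}}$, $E := \nu \int h u^2 v$, I obtain the Nehari-type identities
\[
\|u\|_{s_1,\lambda_1}^2 = B + 2E, \qquad \|v\|_{s_2,\lambda_2}^2 = D + E,
\]
which, substituted into $J_\nu$, produce the clean form $J_\nu(u,v) = \tfrac{s_1}{N} B + \tfrac{s_2}{N} D + \tfrac{1}{2} E$. A three-factor H\"older inequality with exponent triple $\bigl(2_{s_1}^{*}/2,\, 2_{s_2}^{*},\, r\bigr)$ satisfying $\tfrac{2}{2_{s_1}^{*}} + \tfrac{1}{2_{s_2}^{*}} + \tfrac{1}{r} = 1$ is admissible precisely when $N \le 4s_1 + 2s_2$, a free consequence of \eqref{C} since $N \le 6\min\{s_1, s_2\}$ directly implies $N \le 4s_1 + 2s_2$; this delivers
\[
E \le \nu\, C_h\, B^{2/2_{s_1}^{*}} D^{1/2_{s_2}^{*}}.
\]
Combining this bound with the identity $\|u\|_{s_1,\lambda_1}^2 = B + 2E$ and the fractional Hardy--Sobolev inequality $\|u\|_{s_1,\lambda_1}^2 \ge S(\lambda_1) B^{2/2_{s_1}^{*}}$ yields the key pointwise lower bound
\[
B \ge \bigl(S(\lambda_1) - 2\nu\, C_h\, D^{1/2_{s_2}^{*}}\bigr)_{+}^{N/(2s_1)}.
\]

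To close, I argue by contradiction: assume $J_\nu(u,v) \le c^{\star}$. The clean form of the energy immediately yields $\tfrac{s_2}{N} D \le J_\nu(u,v) \le c^{\star}$, hence the uniform a~priori bound $D \le S^{N/(2s_2)}(\lambda_2)$. Substituting into the $B$-estimate, as $\nu \to 0^+$ one gets $B \ge S^{N/(2s_1)}(\lambda_1) + o_\nu(1)$, whence
\[
J_\nu(u,v) \ge \tfrac{s_1}{N} B \ge \tfrac{s_1}{N}\, S^{N/(2s_1)}(\lambda_1) + o_\nu(1).
\]
The strict ordering $\tfrac{s_1}{N}\, S^{N/(2s_1)}(\lambda_1) > \tfrac{s_2}{N}\, S^{N/(2s_2)}(\lambda_2) = c^{\star}$, which follows from $s_1 \ge s_2$ together with the strict hypothesis $S^{N/(2s_1)}(\lambda_1) > S^{N/(2s_2)}(\lambda_2)$, then contradicts $J_\nu(u,v) \le c^{\star}$ for all $\nu < \nu_0$, with $\nu_0$ depending only on $C_h$ and the Hardy--Sobolev constants. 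Hence every fully nontrivial critical point satisfies $J_\nu(u,v) > c^{\star}$, so $c_\nu = c^{\star}$ is attained by $(0, z_{\mu,s_2}^{\lambda_2})$, which is therefore a ground state.

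The main obstacle is the sharp three-factor H\"older estimate for $E$ together with the extraction of a lower bound on $B$ that is \emph{uniform} across all critical points of subcritical energy. In the borderline sub-case of \eqref{C} with $N = \min\{6s_1, 6s_2\}$ (where one must have $s_1 = s_2$) the exponent $r$ degenerates to $\infty$ and one falls back on $\|h\|_{L^{\infty}}$; the radial assumption on $h$ in \eqref{C} is consistent with this but does not directly enter the energy comparison above.
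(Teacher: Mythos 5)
Your argument is correct in its quantitative core and it reaches the conclusion by a genuinely more direct route than the paper. Both proofs hinge on the same two ingredients: the H\"older bound $\nu\int_{\mathbb{R}^N} h\,u^2v \le \nu\,C(h)\,B^{2/2_{s_1}^{*}}D^{1/2_{s_2}^{*}}$ (with the admissibility $N\le 4s_1+2s_2$ coming for free from \eqref{ alpha beta condition}, exactly as you note) and the resulting uniform lower bound $\int u^{2_{s_1}^{*}}\ge (1-\epsilon)S^{N/(2s_1)}(\lambda_1)$ for small $\nu$, which the paper packages as Lemma \ref{Abdelloui fractional version} and which you re-derive inline from $B+2E\ge S(\lambda_1)B^{2/2_{s_1}^{*}}$. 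The difference is organizational: the paper argues by contradiction on $\tilde c_{\nu}=\inf_{\mathcal{N}_\nu}J_\nu$, invokes the Palais--Smale machinery (Lemma \ref{PS compactness lemma}, resp.\ Lemma \ref{critical with h radial} in the critical radial case) to produce a nonnegative, fully nontrivial minimizer, and then runs the estimate on that minimizer; you instead bound from below the energy of \emph{every} fully nontrivial positive critical point, which avoids concentration--compactness altogether and works uniformly in the subcritical and critical cases. That is a real simplification where it applies.

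There is, however, one gap relative to the paper's Definition of ground state, where $c_\nu$ is the minimum of $J_\nu$ over \emph{all} critical points in $\mathbb{D}\setminus\{(0,0)\}$, not only the positive ones. For a sign-changing critical point the coupling term $E=\nu\int h\,u^2v$ may be negative, and then your step ``$J_\nu(u,v)\le c^{\star}\Rightarrow \tfrac{s_2}{N}D\le c^{\star}$'' fails, since it uses $\tfrac{s_1}{N}B+\tfrac{E}{2}\ge 0$; the crude substitute $J_\nu\ge\tfrac16\|(u,v)\|_{\mathbb{D}}^2$ does not recover the needed comparison with $\tfrac{s_2}{N}S^{N/(2s_2)}(\lambda_2)$ in general. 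The paper's route closes this automatically, because every critical point lies on $\mathcal{N}_\nu$ and the contradiction is run against $\inf_{\mathcal{N}_\nu}J_\nu$ (with the minimizer replaced by its absolute values as in Theorem \ref{First theorem}). To repair your version you would need either to restrict the ground-state competition to nonnegative pairs or to add an argument excluding lower-energy sign-changing critical points. Two smaller points: your parenthetical that $N=\min\{6s_1,6s_2\}$ forces $s_1=s_2$ is not correct --- only the degeneration of the H\"older exponent $r$ to $\infty$ requires $s_1=s_2$; and in the critical case the hypothesis \eqref{C} additionally imposes \eqref{H one} on $h$, which your energy comparison indeed never uses, consistent with the fact that the paper only needs it for compactness.
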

\noindent The previous theorems are considering $h$ to be radial only while dealing with the critical case $N = \min\{6s_1, 6s_2\}$. In the critical case $N = \min\{6s_1, 6s_2\}$ to prove the existence of ground state solutions with the function $h$ to be non-radial, we need to restrict ourselves with $s_1=s_2=s$ and the smallness of the coupling parameter $\nu$ is also required to fulfill our goal. We state the following theorem:
\begin{theorem} \label{analogus 1}
Assume that $N= 6s$ with $\nu$ sufficiently small and $h$ is a non-radial function and $S^{\frac{N}{2s_{}}}(\lambda_{2}) \geq S^{\frac{N}{2s_{}}}(\lambda_{1})$. Then the system (\ref{main problem}) has a positive ground state solution $(\Tilde{u}, \Tilde{v}) \in \mathbb{D}$.
\end{theorem}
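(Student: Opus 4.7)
The plan is to apply the mountain pass theorem together with the fractional concentration-compactness principles of \cite{Bonder2018, Chen2018, Pucci2021}, essentially reusing the scheme of Theorem \ref{second theorem} but replacing the radial-compactness argument with a sharp upper bound on the min-max level. Throughout set $s_{1}=s_{2}=s$, so $2_{s}^{*}=3$, and work with
\[ J_{\nu}(u,v)=\tfrac{1}{2}\|u\|_{\lambda_{1}}^{2}+\tfrac{1}{2}\|v\|_{\lambda_{2}}^{2}-\tfrac{1}{3}\!\int|u|^{3}-\tfrac{1}{3}\!\int|v|^{3}-\nu\!\int h(x)u^{2}v \]
on $\mathbb{D}$, whose quadratic part is coercive because $\lambda_{i}<\Lambda_{N,s}$.

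First I would verify mountain pass geometry of $J_{\nu}$ in the standard way: the quadratic terms dominate near the origin while the cubic terms drive $J_{\nu}\to -\infty$ along rays, giving a min-max level $c_{\nu}>0$. The principal step is then to show the strict inequality
\[ c_{\nu}<\tfrac{s}{N}\,S^{\frac{N}{2s}}(\lambda_{1})=\min\!\Big\{\tfrac{s}{N}S^{\frac{N}{2s}}(\lambda_{1}),\,\tfrac{s}{N}S^{\frac{N}{2s}}(\lambda_{2})\Big\}, \]
which is the expected compactness threshold under the hypothesis $S^{\frac{N}{2s}}(\lambda_{2})\geq S^{\frac{N}{2s}}(\lambda_{1})$. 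For this I would test the path $t\mapsto J_{\nu}(t z_{\mu,s}^{\lambda_{1}},t\varepsilon\phi)$ with a fixed positive $\phi\in C_{c}^{\infty}(\mathbb{R}^{N})$ and $\varepsilon>0$ small; optimizing in $t$ yields an expression of the form $A^{3}/(6B^{2})$ whose Taylor expansion in $\varepsilon$ exhibits a negative first-order correction proportional to $\nu\varepsilon\!\int h(x)(z_{\mu,s}^{\lambda_{1}})^{2}\phi$, which defeats the $O(\varepsilon^{2})$ diagonal terms. Crucially, this single integral is strictly positive for any positive $\phi$ and positive $h$, so no radial symmetry is required, and the estimate survives in the non-radial regime.

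With the sharp level bound in place I would run the concentration-compactness principle on a Palais-Smale sequence $(u_{n},v_{n})$ at level $c_{\nu}$. The profile decomposition identifies bubbles concentrating at the Hardy singularity with energy at least $\tfrac{s}{N}S^{\frac{N}{2s}}(\lambda_{i})$, together with bubbles at other points whose threshold is the larger pure Sobolev constant; the bound of Step~2 forbids all of them, and a Brezis-Lieb-type splitting for the coupling term, available because $h\in L^{1}\cap L^{\infty}$ and $(u_{n}),(v_{n})$ are uniformly bounded in $L^{3}$, yields strong convergence to a nontrivial critical point $(\tilde u,\tilde v)$. Replacing the sequence by $(|u_{n}|,|v_{n}|)$ from the outset gives $\tilde u,\tilde v\geq 0$; if $\tilde u\equiv 0$ then $(\tilde u,\tilde v)=(0,z_{\mu,s}^{\lambda_{2}})$ with energy $\tfrac{s}{N}S^{\frac{N}{2s}}(\lambda_{2})$, contradicting the estimate $c_{\nu}<\tfrac{s}{N}S^{\frac{N}{2s}}(\lambda_{1})\leq\tfrac{s}{N}S^{\frac{N}{2s}}(\lambda_{2})$, while if $\tilde v\equiv 0$ then the second equation forces $\nu h\tilde u^{2}\equiv 0$, giving $\tilde u\equiv 0$ and the same contradiction. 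Strict positivity then follows from the fractional strong maximum principle.

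The main obstacle I anticipate is running concentration-compactness without the radial embedding: the coupling term $\nu\!\int hu_{n}^{2}v_{n}$ links the two profile decompositions, so an independent analysis of each component's bubbles is not automatic. This is where the smallness of $\nu$ enters: it turns the coupling into a genuine perturbation so that the cross-terms in the Brezis-Lieb splitting are controlled by the quadratic diagonal, and the profiles can be analyzed separately. Finally, identifying $c_{\nu}$ with the ground-state level follows from the $3$-homogeneity of the cubic and coupling terms, which places every nontrivial critical point of $J_{\nu}$ on a Nehari-type manifold whose infimum coincides with $c_{\nu}$.
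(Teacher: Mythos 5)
Your proposal is correct in outline but reaches the result by a genuinely different route from the paper. The paper's proof of Theorem \ref{analogus 1} is a two-line reduction: it reuses the argument of Theorem \ref{second theorem}, namely minimization of $J_\nu$ on the Nehari manifold $\mathcal{N}_\nu$, where the crucial strict bound $\Tilde{c}_\nu < J_\nu(z_{\mu,s}^{\lambda_1},0)=\frac{s}{N}S^{\frac{N}{2s}}(\lambda_1)=\min\{\frac{s}{N}S^{\frac{N}{2s}}(\lambda_1),\frac{s}{N}S^{\frac{N}{2s}}(\lambda_2)\}$ comes \emph{structurally} from the fact that $(z_{\mu,s}^{\lambda_1},0)$ lies on $\mathcal{N}_\nu$ but is not a critical point of $J_\nu|_{\mathcal{N}_\nu}$; compactness below that level is then supplied by Lemma \ref{critical case with h non radial}. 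You instead set up a mountain-pass min-max and obtain the same strict bound \emph{computationally}, by expanding $\max_t J_\nu(tz_{\mu,s}^{\lambda_1},t\varepsilon\phi)$ in $\varepsilon$ and observing that the coupling contributes a first-order term $-\,c\,\nu\varepsilon\int h (z_{\mu,s}^{\lambda_1})^2\phi\,\mathrm{d}x$ that beats the $O(\varepsilon^2)$ diagonal corrections. Since for $N=6s$ all nonlinear terms are cubic, every ray meets $\mathcal{N}_\nu$ exactly once at the fibering maximum, so your min-max level coincides with the Nehari infimum and the two formulations are equivalent; your version makes the mechanism for the energy gap explicit, while the paper's is shorter and does not require the test-path expansion.

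Two points need tightening. First, in the critical case the standing hypothesis on $h$ is \eqref{H one}, which only requires $h\ge 0$ with $h(0)=\lim_{|x|\to\infty}h(x)=0$; hence your inference ``$\tilde v\equiv 0$ forces $\nu h\tilde u^2\equiv 0$, giving $\tilde u\equiv 0$'' fails where $h$ vanishes. The repair is the paper's: if $\tilde v\equiv 0$ the first equation decouples, so $\tilde u=z_{\mu,s}^{\lambda_1}$ and $J_\nu(\tilde u,0)=\frac{s}{N}S^{\frac{N}{2s}}(\lambda_1)$, contradicting the level bound. Second, you should separate the two mechanisms that kill concentration: at $0$ and $\infty$ the thresholds $\frac{s}{N}S^{\frac{N}{2s}}(\lambda_i)$ from \eqref{rho node inequalities} and \eqref{rho infty inequalities} are already excluded by your level bound together with $h(0)=h(\infty)=0$, with no smallness of $\nu$ needed; it is only at interior points $x_j\neq 0,\infty$, where $h$ need not vanish and the coupling genuinely degrades the Sobolev threshold to $\frac{s}{N}\bigl(S/(1+2_s^*\nu\Tilde{C})\bigr)^{\frac{N}{2s}}$ as in \eqref{368}--\eqref{369}, that the smallness of $\nu$ is indispensable. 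Your ``main obstacle'' paragraph gestures at this but should be made quantitative exactly as in Lemma \ref{critical case with h non radial}.
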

\begin{theorem}\label{analogous 2}
Assume that $N = 6s$ with $\nu$ sufficiently small and $h$ is a non-radial function. If $S^{\frac{N}{2s_{1}}}(\lambda_{1})> S^{\frac{N}{2s_{2}}}(\lambda_{2})$, then the pair $(0, z_{\mu,s_{2}}^{\lambda_{2}})$ is a ground state of (\ref{main problem}).
\end{theorem}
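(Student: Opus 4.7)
\emph{Proof proposal.} The plan is to verify that $(0, z_{\mu,s_{2}}^{\lambda_{2}})$ lies in $\mathbb{D}$, is a critical point of $J_{\nu}$, and realises the ground-state level $c_{\nu}$ defined in \eqref{ground state level}. Setting $u\equiv 0$ makes the first equation of \eqref{main problem} trivially satisfied and reduces the second to the single fractional Hardy equation $(-\Delta)^{s_{2}}v-\lambda_{2}v/|x|^{2s_{2}}=v^{2_{s_{2}}^{*}-1}$, whose positive solution $z_{\mu,s_{2}}^{\lambda_{2}}$ is supplied by \cite[Theorem 1.5]{Dipierro2016}. A direct computation (pairing the second equation with $z_{\mu,s_{2}}^{\lambda_{2}}$ and using the fact that $z_{\mu,s_{2}}^{\lambda_{2}}$ is an extremal of $S^{N/2s_{2}}(\lambda_{2})$) then gives $J_{\nu}(0, z_{\mu,s_{2}}^{\lambda_{2}})=\frac{s_{2}}{N}S^{N/2s_{2}}(\lambda_{2})$.

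Next I would show that no other bound state lies strictly below this level. Classify any $(u,v)\in\mathbb{D}\setminus\{(0,0)\}$ with $J_{\nu}'(u,v)=0$ into three cases. If $u\equiv 0$ and $v\not\equiv 0$, then $v$ solves the single Hardy equation and hence $J_{\nu}(0,v)\geq \frac{s_{2}}{N}S^{N/2s_{2}}(\lambda_{2})$. If $v\equiv 0$ and $u\not\equiv 0$, then $u$ solves the Hardy equation associated with $\lambda_{1}$, so $J_{\nu}(u,0)\geq \frac{s_{1}}{N}S^{N/2s_{1}}(\lambda_{1})$; because the reduction $s_{1}=s_{2}=s$ is forced in the non-radial critical regime $N=6s$ (as discussed in the paragraph preceding Theorem \ref{analogus 1}), the hypothesis $S^{N/2s_{1}}(\lambda_{1})>S^{N/2s_{2}}(\lambda_{2})$ makes this level strictly larger than that of the semi-trivial pair. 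The remaining case, $u,v\not\equiv 0$, constitutes the main work of the proof.

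For a fully non-trivial bound state, testing each equation against the corresponding function and applying the fractional Hardy--Sobolev inequality produces two coupled scalar estimates involving $\|u\|_{L^{2_{s}^{*}}}$, $\|v\|_{L^{2_{s}^{*}}}$, and the cross term $\nu\int_{\mathbb{R}^{N}}h u^{2} v\,dx$. Under $N=6s$ and $h\in L^{\infty}\cap L^{1}$, H\"older's inequality bounds the cross term by $C\nu\|u\|_{L^{2_{s}^{*}}}^{2}\|v\|_{L^{2_{s}^{*}}}$, which is a vanishing perturbation as $\nu\to 0^{+}$. A continuity argument parallel to the proof of Theorem \ref{third theorem} (and its critical-case analogue Theorem \ref{analogus 1}) then shows
\begin{equation*}
    J_{\nu}(u,v)\ \geq\ \tfrac{s}{N}\bigl(S^{N/2s}(\lambda_{1})+S^{N/2s}(\lambda_{2})\bigr)-o_{\nu}(1),
\end{equation*}
which exceeds $\frac{s}{N}S^{N/2s}(\lambda_{2})$ once $\nu$ is sufficiently small. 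Combining the three cases forces $c_{\nu}=J_{\nu}(0,z_{\mu,s_{2}}^{\lambda_{2}})$, so the semi-trivial pair is a ground state.

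The principal obstacle is the third case. Without radial symmetry of $h$ one loses access to Palais--Smale compactness on radial spaces, while the critical dimensional identity $N=6s$ places the cubic coupling $u^{2}v$ in a borderline Lebesgue class where standard H\"older estimates are tight. To handle this, I would invoke the fractional concentration--compactness principles of \cite{Bonder2018,Chen2018,Pucci2021} to produce a profile decomposition of any minimising Palais--Smale sequence, identify possible concentration at the origin, at infinity, and at interior points, and rule out non-trivial coupled profiles via the quantitative smallness of $\nu$ and the strict Hardy--Sobolev gap $S^{N/2s}(\lambda_{1})-S^{N/2s}(\lambda_{2})>0$.
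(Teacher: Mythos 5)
Your overall architecture (compute the energy of the semi-trivial pair, then show no bound state can lie strictly below it) is legitimate and would prove the theorem, but the decisive step in your third case is not established and, as stated, is false in the claimed uniformity. You assert that \emph{every} fully non-trivial bound state satisfies $J_{\nu}(u,v)\geq \frac{s}{N}\bigl(S^{\frac{N}{2s}}(\lambda_{1})+S^{\frac{N}{2s}}(\lambda_{2})\bigr)-o_{\nu}(1)$. Testing the first equation gives $S(\lambda_{1})\sigma_{1}^{2/2_{s}^{*}}\leq \sigma_{1}+2\nu C\,\sigma_{1}^{2/2_{s}^{*}}\sigma_{2}^{1/2_{s}^{*}}$ with $\sigma_{i}$ the critical Lebesgue masses, and the perturbation term is \emph{not} uniformly $o_{\nu}(1)$ over all bound states because $\sigma_{2}$ has no a priori bound; for $\sigma_{2}$ large the inequality ceases to force $\sigma_{1}$ away from zero. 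The way the paper closes this (in the proof of Theorem \ref{third theorem}, to which the proof of Theorem \ref{analogous 2} reduces) is to argue only about bound states with energy \emph{below} $\frac{s}{N}S^{\frac{N}{2s}}(\lambda_{2})$: for those, the identity $J_{\nu}(u,v)=\frac{s}{N}(\sigma_{1}+\sigma_{2})+\frac{\nu}{2}\int h u^{2}v$ yields the a priori bound $\sigma_{1}+\sigma_{2}<S^{\frac{N}{2s}}(\lambda_{2})$ (cf.\ \eqref{four four}), which controls $\sigma_{2}^{1/2_{s}^{*}}$ by a constant depending only on $h$ and $S(\lambda_{2})$; then Lemma \ref{Abdelloui fractional version} gives $\sigma_{1}\geq(1-\epsilon)S^{\frac{N}{2s}}(\lambda_{1})>S^{\frac{N}{2s}}(\lambda_{2})$ for $\nu$ small, contradicting the a priori bound. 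Without either this restriction-plus-dichotomy or your unproven uniform estimate, Case 3 is open. A second, smaller slip: your Case 2 ($v\equiv 0$, $u\not\equiv 0$) is vacuous, since the second equation then reads $0=\nu h u^{2}$ and $h>0$ forces $u\equiv 0$; your reasoning there (that $u$ solves the $\lambda_{1}$-Hardy equation and contributes the level $\frac{s}{N}S^{\frac{N}{2s}}(\lambda_{1})$) is harmless but not what actually happens.

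Separately, note where the hypotheses ``$h$ non-radial, $\nu$ small, $N=6s$'' actually enter the paper's proof: they are consumed by Lemma \ref{critical case with h non radial}, the Palais--Smale compactness statement needed to turn a hypothetical level $\tilde{c}_{\nu}<\frac{s}{N}S^{\frac{N}{2s}}(\lambda_{2})$ into an actual positive solution to which the $\sigma$-argument applies. Your final paragraph correctly identifies that concentration--compactness in the borderline class is the principal obstacle, but leaves it as a plan; in the paper this is precisely the content of Lemma \ref{critical case with h non radial} (testing with $(u_{n}\Psi_{j,\epsilon},0)$ and $(0,v_{n}\Psi_{j,\epsilon})$, combining the resulting inequalities, and using smallness of $\nu$ to push $\bigl(\frac{S}{1+2_{s}^{*}\nu\tilde{C}}\bigr)^{\frac{N}{2s}}$ above the admissible energy range). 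If you intend to bypass Palais--Smale entirely by classifying bound states directly, you must still execute the Case 3 estimate along the lines above; if you retain the paper's contradiction scheme, you must actually prove the compactness lemma rather than cite it as an intention.
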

\noindent In the next theorem, we show the existence of a positive bound state solution of the Mountain pass type.
\begin{theorem} \label{Bound state}
Assume (\ref{C}) with $s_{1} = s_{2} = s$. If 
     \begin{equation} \label{inequalities in bound state theorem}
          \frac{1}{2}<\bigg(\frac{S(\lambda_{2})}{S(\lambda_{1})}\bigg)^{\frac{N}{2s}} <1, 
    \end{equation}
then for $\nu$ sufficiently small, there exists a  Mountain pass type positive bound state solution to the problem (\ref{main problem}).
\end{theorem}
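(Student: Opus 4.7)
The plan is to construct a second positive critical point of $J_\nu$ of mountain pass type whose energy lies strictly above the semi-trivial ground state level. Since the inequality \eqref{inequalities in bound state theorem} in particular forces $S^{N/(2s)}(\lambda_2)<S^{N/(2s)}(\lambda_1)$ when $s_1=s_2=s$, Theorem~\ref{third theorem} applies for $\nu$ small enough and identifies the ground state of \eqref{main problem} as the semi-trivial pair $(0,z_{\mu,s}^{\lambda_2})$ at energy $c_\nu$. I would set up a mountain pass around this point. The $u$-Hessian
\[
D^{2}_{u}J_\nu(0,z_{\mu,s}^{\lambda_2})[\varphi,\varphi]=\|\varphi\|^{2}_{s,\lambda_1}-2\nu\int_{\mathbb{R}^{N}}h\,z_{\mu,s}^{\lambda_2}\,\varphi^{2}
\]
is coercive in the $u$-direction for small $\nu$, and together with the standard stability of $z_{\mu,s}^{\lambda_2}$ in the $v$-variable this shows that $(0,z_{\mu,s}^{\lambda_2})$ is a strict local minimum of $J_\nu$ in $\mathbb{D}$. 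Picking a positive extremal $u_0=z_{\mu',s}^{\lambda_1}$ for the scalar $\lambda_1$-problem and choosing $T>0$ large enough that $J_\nu(Tu_0,z_{\mu,s}^{\lambda_2})<c_\nu$ (a consequence of the dominance of the critical $2_s^{*}$-term), the class of continuous paths in $\mathbb{D}$ from $(0,z_{\mu,s}^{\lambda_2})$ to $(Tu_0,z_{\mu,s}^{\lambda_2})$ gives a mountain pass level $c_{MP}>c_\nu$.

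The crux of the proof is to show that $c_{MP}$ stays strictly below the first compactness threshold dictated by the fractional concentration-compactness principle used for \eqref{main problem}. Along the path $\gamma(t)=(tu_0,z_{\mu,s}^{\lambda_2})$, a direct computation yields
\[
\max_{t\geq 0}J_\nu(\gamma(t))=c_\nu+\frac{s}{N}\,\frac{\bigl(\|u_0\|^{2}_{s,\lambda_1}-2\nu\int_{\mathbb{R}^{N}} h\,u_0^{2}z_{\mu,s}^{\lambda_2}\bigr)^{N/(2s)}}{\bigl(\int_{\mathbb{R}^{N}} u_0^{2_s^{*}}\bigr)^{(N-2s)/(2s)}},
\]
and I would then optimize the scaling parameter $\mu'$ of $u_0$ so as to maximize the positive coupling integral $\int h\,u_0^{2}z_{\mu,s}^{\lambda_2}$. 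The role of \eqref{inequalities in bound state theorem} is exactly here: the lower bound $\tfrac12<(S(\lambda_2)/S(\lambda_1))^{N/(2s)}$ is precisely what permits a single $\lambda_1$-bubble placed on top of the semi-trivial ground state to keep the total energy below the threshold for bubble splitting, once the small coupling correction is factored in.

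Having trapped $c_{MP}$ between $c_\nu$ and this compactness threshold, the mountain pass theorem furnishes a Palais-Smale sequence at level $c_{MP}$. The fractional concentration-compactness results of Bonder et al., Chen et al., and Pucci-Temperini, combined with the strict upper bound just established, rule out bubble loss and force a subsequence to converge in $\mathbb{D}$ to a critical point $(\tilde u,\tilde v)$ with $J_\nu(\tilde u,\tilde v)=c_{MP}>c_\nu$. Since the level exceeds $c_\nu$, $(\tilde u,\tilde v)$ is distinct from the semi-trivial ground state, so it is a genuine bound state; positivity of each component is obtained by running the whole scheme with the positive parts $(|u_n|,|v_n|)$ of the Palais-Smale sequence and applying the strong maximum principle to the limiting equations.

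The hardest part is clearly the strict energy estimate in the second paragraph: one must quantify precisely how much the coupling term $\nu\int h\,u_0^{2}z_{\mu,s}^{\lambda_2}$ subtracts from the uncoupled mountain pass value $c_\nu+\tfrac{s}{N}S^{N/(2s)}(\lambda_1)$, and show that the remainder stays strictly below the compactness threshold uniformly as $\nu\to 0^{+}$. This relies on sharp Hardy-Sobolev asymptotics for the extremals $z_{\mu,s}^{\lambda_i}$ together with a careful optimization of the concentration parameter $\mu'$, and it is exactly here that the full two-sided condition \eqref{inequalities in bound state theorem} enters: the upper bound $(S(\lambda_2)/S(\lambda_1))^{N/(2s)}<1$ has already been used to place the ground state at the semi-trivial level, while the lower bound $\tfrac12<(S(\lambda_2)/S(\lambda_1))^{N/(2s)}$ is what ultimately makes $c_{MP}$ admissible for compactness.
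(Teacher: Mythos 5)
Your overall strategy (mountain pass plus concentration--compactness below a threshold) is in the right family, but two of your key steps fail, and they are exactly the places where the paper's proof does something different. First, $(0,z_{\mu,s}^{\lambda_2})$ is \emph{not} a strict local minimum of $J_\nu$ on the whole space $\mathbb{D}$: along the ray $t\mapsto(0,tz_{\mu,s}^{\lambda_2})$ the map $t\mapsto J_2(tz_{\mu,s}^{\lambda_2})$ has a strict maximum at $t=1$, since $J_2''(z_{\mu,s}^{\lambda_2})[z_{\mu,s}^{\lambda_2}]^2=(2-2_s^*)\|z_{\mu,s}^{\lambda_2}\|_{\lambda_2,s}^2<0$. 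The ``stability in the $v$-variable'' you invoke holds only tangentially to the Nehari manifold (this is Proposition \ref{Semitrivial solution as a minimizer}(i)), so the mountain pass geometry must be set up for $J_\nu^+$ restricted to $\mathcal{N}_\nu^+$; this is what the paper does with the path class $\Sigma_\nu$ joining the two semi-trivial states $(z_{\mu,s}^{\lambda_1},0)$ and $(0,z_{\mu,s}^{\lambda_2})$.

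Second, and more seriously, you never establish a \emph{lower} bound on your mountain pass level beyond $c_{MP}>c_\nu=\frac{s}{N}S^{\frac{N}{2s}}(\lambda_2)$. The compactness available here is not ``everything below one threshold'': Lemma \ref{PS compactness lemma} gives the (PS) condition only for $c<\frac{s}{N}S^{\frac{N}{2s}}(\lambda_2)$, while Lemma \ref{PS compactness lemma second} covers only the window $\frac{s}{N}S^{\frac{N}{2s}}(\lambda_1)<c<\frac{s}{N}\big(S^{\frac{N}{2s}}(\lambda_1)+S^{\frac{N}{2s}}(\lambda_2)\big)$ (avoiding the discrete set $\frac{sl}{N}S^{\frac{N}{2s}}(\lambda_2)$). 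A level in the intermediate range $\big[\frac{s}{N}S^{\frac{N}{2s}}(\lambda_2),\frac{s}{N}S^{\frac{N}{2s}}(\lambda_1)\big]$ is not covered, and your $c_{MP}$ could a priori land there: your explicit path $\gamma(t)=(tu_0,z_{\mu,s}^{\lambda_2})$ controls the level only from above. This is precisely where the hypothesis $\frac12<(S(\lambda_2)/S(\lambda_1))^{N/2s}$ is used in the paper, and its role is the opposite of what you describe: it yields the separability condition \eqref{Separability condition implies}, $\frac{2s}{N}S^{\frac{N}{2s}}(\lambda_2)>\frac{s}{N}S^{\frac{N}{2s}}(\lambda_1)$, which combined with a balanced-mass argument (at some $t_0$ the two components of any admissible path carry equal $L^{2_s^*}$-mass $\tilde\sigma_s$, and Lemma \ref{Abdelloui fractional version} forces $\tilde\sigma_s\geq(1-\epsilon)\big(\frac{S(\lambda_1)+S(\lambda_2)}{2}\big)^{N/2s}$ for small $\nu$) pushes $\mathcal{C}_{\mathcal{MP}}$ strictly \emph{above} $\frac{s}{N}S^{\frac{N}{2s}}(\lambda_1)$ and hence into the compactness window. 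The upper bound $\mathcal{C}_{\mathcal{MP}}<\frac{s}{N}\big(S^{\frac{N}{2s}}(\lambda_1)+S^{\frac{N}{2s}}(\lambda_2)\big)$ does not use \eqref{inequalities in bound state theorem} at all; it follows from projecting the interpolating path $\big((1-t)^{1/2}z_{\mu,s}^{\lambda_1},\,t^{1/2}z_{\mu,s}^{\lambda_2}\big)$ onto the Nehari manifold. Optimizing the concentration parameter $\mu'$ of your bubble cannot repair the missing lower bound, since the coupling correction it produces is only $O(\nu)$ and vanishes in the regime $\nu\to 0^+$ where the theorem is stated.
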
  }
\begin{remark}
In case $s_1=s_2$, the order between the parameters $\lambda_1$ and $\lambda_2$ determines the order between the \textit{semi-trivial} energy levels. Indeed, if $\lambda_2>\lambda_1$ and $\nu$ is small enough, $J_\nu(0,z_{\mu,s}^{\lambda_2})= \frac{s}{N} S^{\frac{N}{2s}}(\lambda_2) < \frac{s}{N} S^{\frac{N}{2s}}(\lambda_1) = J_\nu(z_{\mu,s}^{\lambda_2},0)$ i.e., the pair $(0,z_{\mu,s}^{\lambda_2})$ is a ground state solution of (1.1), see Theorem \ref{third theorem}. In this case, the order of $\lambda_1$ and $\lambda_2$ plays a vital for determining the existence of positive and ground-state \textit{semi-trivial} solutions. But in the case of $s_1 \neq s_2$, the order between $\lambda_1$ and $\lambda_2$ fails to determine the order between the \textit{semi-trivial} energy levels. Therefore, we find positive ground state solutions independent of the order between $\lambda_1$ and $\lambda_2$ while dealing with the system involving two different fractional Laplacians.
\end{remark}
Our paper is organized in the following manner. First, we give some preliminary results and functional analysis settings in Section \ref{S2}. Further, Section \ref{S3} deals with the results in which the functional $J_\nu$ satisfies the Palais-Smale condition for both the cases, i.e., subcritical and critical cases. In Section \ref{S4}, we give the proofs of the main results of this article concerned with positive bound and ground state solutions.
\section{Preliminaries and functional setting}\label{S2}

In this section, we give an appropriate variational setting for the system \eqref{main problem}. First, we define the energy functional $J_\nu$ associated with the system \eqref{main problem} given as
\begin{align} \label{energy functional}
\begin{split}
    J_{\nu}(u,v) &= \frac{1}{2} \iint_{\mathbb{R}^{2N}} \frac{|u(x)-u(y)|^{2}}{|x-y|^{N+2s_{1}}} \mathrm{d}x \mathrm{d}y + \frac{1}{2} \iint_{\mathbb{R}^{2N}} \frac{|v(x)-v(y)|^{2}}{|x-y|^{N+2s_{2}}} \mathrm{d}x \mathrm{d}y -  \frac{\lambda_{1}}{2} \int_{\mathbb{R}^{N}} \frac{u^{2}~~}{|x|^{2s_{1}}}\mathrm{d}x\\
    &~~~~~  - \frac{\lambda_{2}}{2} \int_{\mathbb{R}^{N}} \frac{v^{2}~~}{|x|^{2s_{2}}}\mathrm{d}x - \frac{1}{2_{s_{1}}^{*}}\int_{\mathbb{R}^{N}} |u|^{2_{s_{1}}^{*}}\mathrm{d}x  -\frac{1}{2_{s_{2}}^{*}}\int_{\mathbb{R}^{N}} |v|^{2_{s_{2}}^{*}}\mathrm{d}x - \nu \int_{\mathbb{R}^{N}}h(x) u^{2} v  \, \mathrm{d}x,
\end{split}
\end{align} 
defined on the product space $\mathbb{D} = \mathcal{D}^{s_{1},2}(\mathbb{R}^{N}) \times \mathcal{D}^{s_{2},2}(\mathbb{R}^{N})$. The space $\mathcal{D}^{s_i,2}(\mathbb{R}^{N}),~( i=1,2)$ is the closure of $C_{0}^{\infty}(\mathbb{R}^{N})$ with respect to the Gagliardo seminorm
\[ \|u\|_{s_i} := \bigg( \iint_{\mathbb{R}^{2N}} \frac{|u(x)-u(y)|^{2}}{|x-y|^{N+2s_{i}}}\,\mathrm{d}x\mathrm{d}y  \bigg)^{\frac{1}{2}},~~\text{for}~i=1,2.\]
We refer to the articles \cite{Brasco2021characterisation,Brasco2019note} by Brasco et al. for more details about the space $\mathcal{D}^{s_i,2}(\mathbb{R}^{N}),~( i=1,2)$. Further, we endow the following norm with the product space $\mathbb{D}$ given by
$$ \|(u,v)\|^{2}_{\mathbb{D}} = \|u\|_{\lambda_{1}, s_{1}}^{2} + \|v\|_{\lambda_{2}, s_{2}}^{2},  $$
where
$$ \|u\|_{\lambda_i, s_i}^{2} = \iint_{\mathbb{R}^{2N}} \frac{|u(x)-u(y)|^{2}}{|x-y|^{N+2s_{i}}}\,\mathrm{d}x\mathrm{d}y - \lambda_i \int_{\mathbb{R}^{N}}\frac{u^{2}}{|x|^{2s_i}}\,\mathrm{d}x,~\text{for}~i =1,2. $$
The above norm is well defined due to the fractional Hardy inequality \cite[Theorem 1.1]{Frank2008} given by
\begin{equation} \label{fractional Hardy inequality}
    \Lambda_{N,s_i} \int_{\mathbb{R}^{N}}\frac{{u^{2}}}{|x|^{2s_i}}\mathrm{d}x \leq \iint_{\mathbb{R}^{2N}} \frac{|u(x)-u(y)|^{2}}{|x-y|^{N+2s_{i}}} \mathrm{d}x\mathrm{d}y, ~\text{for}~i =1,2.
\end{equation}
where $\Lambda_{N,s_{i}} = 2 \pi^{N/2} \frac{\Gamma^{2}(\frac{N+2s_i}{4}) \Gamma(\frac{N+2s_i}{2})}{\Gamma^{2}(\frac{N-2s_i}{4}) ~|\Gamma(-s_{i})|}, (i=1,2)$ is the sharp constant for the inequality \eqref{fractional Hardy inequality}. We can note that the norms $\|\cdot\|_{\lambda_i,s_i}$ and $\|\cdot\|_{s_i}$ for any $\lambda_i \in (0, \Lambda_{N,s_i})~\text{with}~ i=1,2$ are equivalent due to the Hardy’s inequality \eqref{fractional Hardy inequality}. 
{
In order to obtain (positive) solutions to \eqref{main problem}, we can implement the maximum principle to the
critical points of the energy functional $J_\nu$ in a suitable manner. We first observe that the second equation (of the system \eqref{main problem}) guarantees the positivity
of the $v$ component, while the positivity of $u$ is immediately derived by the first equation.} Let us recall that the solutions of \eqref{single problem} arise as minimizers $z^{\lambda_i}_{\mu,s_i}~(i=1,2)$ of the Rayleigh quotient given by (see \cite{Dipierro2016})
\begin{equation}\label{S lambda}
    S(\lambda_i) := \inf\limits_{u \in \mathcal{D}^{s_i,2}(\mathbb{R}^{N}), u \not\equiv 0} \frac{\|u\|_{\lambda_{i},s_i}^{2}}{\|u\|_{2_{s_i}^{*}}^{2}~~} = \frac{\|z^{\lambda_i}_{\mu,s_i}\|_{\lambda_{i},s_i}^{2}}{\|z^{\lambda_i}_{\mu,s_i}\|_{2_{s_i}^{*}}^{2}~~}, (i=1,2).
\end{equation}
Moreover, we have
\begin{equation} \label{extremal value at norms}
  \|z^{\lambda_i}_{\mu,s_i}\|_{\lambda_{i},s_i}^{2} = \|z^{\lambda_i}_{\mu,s_i}\|_{2_{s_i}^{*}}^{2_{s_i}^{*}} = S^{\frac{N}{2s_i}}(\lambda_i),~\text{for}~i=1,2. 
\end{equation}
If $\lambda_i =0$ for $i=1,2$, then $S(\lambda_i)=S_i$ which is known to be achieved by the extremal functions of the type $C(N,s_i)(1+|x|^2)^{-\frac{N-2s_i}{2}}$ (see \cite{Lieb2002sharp}), where $C(N,s_i)$ is a positive constant depending on $N$ and $s_i$ only. We write $S_1=S_2=S$ when $s_1=s_2.$ Let us re-write the functional $J_{\nu}(u,v)$ as 
\begin{equation}
    J_{\nu}(u,v) = J_{\lambda_1}(u_{}) + J_{\lambda_2}(v_{}) - \nu \int_{\mathbb{R}^{N}}h(x) u^{2} v 
 \,\mathrm{d}x,
\end{equation}
where
\begin{align} \label{value of functional componentwise}
    J_{\lambda_i}(u_{}) = \frac{1}{2} \iint_{\mathbb{R}^{2N}} \frac{|u(x)-u(y)|^{2}}{|x-y|^{N+2s_{i}}} \,\mathrm{d}x\mathrm{d}y  -  \frac{\lambda_{i}}{2} \int_{\mathbb{R}^{N}} \frac{u^{2}}{|x|^{2s_{i}}}\,\mathrm{d}x  - \frac{1}{2_{s_{i}}^{*}}\int_{\mathbb{R}^{N}} |u|^{2_{s_{i}}^{*}}\,\mathrm{d}x,~ \text{for}~i=1,2.
\end{align}
The functional $J_\nu$ is well-defined and is $C^2$ on the product space $\mathbb{D}$. In the following, we are going to prove that the functional  $J_{\nu}$ is  $C^1$ first and a similar argument will be followed for $C^2$ as well.  \\
\textbf{(a). The functional $J_\nu$ is well-defined on $\mathbb{D}=\mathcal{D}^{s_{1},2}(\mathbb{R}^{N}) \times \mathcal{D}^{s_{2},2}(\mathbb{R}^{N})$ :}
It follows that
\begin{align} \label{f2}
    |J_{\nu}(u,v)| & \leq \frac{1}{2} \|(u,v)\|_{\mathbb{D}}^2 + \frac{1}{2_{s_{1}}^{*}} \|u\|_{2_{s_{1}}^{*}}^{2_{s_{1}}^{*}}  +\frac{1}{2_{s_{2}}^{*}}\|v\|_{2_{s_{2}}^{*}}^{2_{s_{2}}^{*}} + \nu \int_{\mathbb{R}^{N}}h(x)|u|^{2}|v|^{}\, \mathrm{d}x.
\end{align} 
The first term in the above inequality is finite as it is a norm on the product space $\mathbb{D}$. 
We recall the Sobolev embeddings
\begin{equation} \label{embeddings}
    S(\lambda_i) \|u\|_{2_{s_i}^{*}}^{2} \leq \|u\|_{\lambda_{1},s_i}^{2}, \text{ for } i=1,2.
\end{equation} 
By using the above embeddings, it immediately follows that the second and third terms in \eqref{f2} are finite. Now just we need to check the finiteness of the last term in \eqref{f2}. Let us denote the integral as $I= \int_{\mathbb{R}^{N}}h(x)|u|^{2}|v|^{}\, \mathrm{d}x$. It is given $N \leq \min\{6{s_1}, 6s_2 \}$, and the function $h$ is positive with $h \in L^{1}(\mathbb{R}^N) \cap L^{\infty}(\mathbb{R}^N)$.\\
\textbf{Case 1.}  $N < \min\{6{s_1}, 6s_2\}$\\
If $s_1 \geq s_2$, then $2_{s_1}^{*}\geq 2_{s_2}^{*}$ which further implies $\frac{2}{2_{s_1}^{*}} + \frac{1}{2_{s_2}^{*}} \leq \frac{3}{2_{s_2}^{*}} <1$. By symmetry, the condition $s_2 \geq s_1$ also implies $\frac{2}{2_{s_1}^{*}} + \frac{1}{2_{s_2}^{*}} <1$. Now we have
\[ I = \int_{\mathbb{R}^{N}}h(x)|u|^{2}|v|^{}\, \mathrm{d}x = \int_{\mathbb{R}^{N}}(h(x))^{{1-\frac{2}{2_{s_1}^{*}}} - \frac{1}{2_{s_2}^{*}}} (h(x)^{\frac{2}{2_{s_1}^{*}}}|u|^{2}) (h(x)^{\frac{1}{2_{s_2}^{*}}}|v|)\, \mathrm{d}x \]
Since the continuous embedding $\mathcal{D}^{s_i,2}(\mathbb{R}^{N}) \hookrightarrow L^{2_{s_i}^{*}}(\mathbb{R}^{N}),~(i=1,2)$ and $h \in L^{\infty}(\mathbb{R}^{N})$, it follows that $h(x)^{\frac{2}{2_{s_1}^{*}}} |u|^{2} \in L^{\frac{2_{s_1}^{*}}{2}}(\mathbb{R}^{N})$ and similarly $h(x)^{\frac{1}{2_{s_2}^{*}}} |v| \in L^{2_{s_2}^{*}}(\mathbb{R}^{N})$. Also, the assumption $h \in L^{1}({\mathbb{R}^N})$ implies $\displaystyle (h(x))^{{1-\frac{2}{2_{s_1}^{*}}} - \frac{1}{2_{s_2}^{*}}} \in L^{\frac{1}{{1-\frac{2}{2_{s_1}^{*}}} - \frac{1}{2_{s_2}^{*}}}}({\mathbb{R}^N})$.

We denote by $p=\frac{2_{s_1}^{*}}{2},~ q= 2_{s_2}^{*}$ and $r = \frac{1}{{1-\frac{2}{2_{s_1}^{*}}} - \frac{1}{2_{s_2}^{*}}}$. Since $\frac{1}{p} + \frac{1}{q} + \frac{1}{r} =1$, by generalized H\"{o}lder's inequality it follows that
\begin{align*}
    I &\leq \bigg(\int_{\mathbb{R}^{N}}h(x)\, \mathrm{d}x \bigg)^{1/r} \bigg( \int_{\mathbb{R}^{N}}h(x)|u|^{2_{s_1}^{*}}\, \mathrm{d}x \bigg)^{1/p} \bigg(\int_{\mathbb{R}^{N}}h(x)|v|^{2_{s_2}^{*}}\, \mathrm{d}x \bigg)^{1/q}\\
    &\leq \|h\|_{1}^{1/r} \|h\|_{\infty}^{1/p + 1/q}\bigg( \int_{\mathbb{R}^{N}} |u|^{2_{s_1}^{*}}\, \mathrm{d}x \bigg)^{1/p} \bigg(\int_{\mathbb{R}^{N}} |v|^{2_{s_2}^{*}}\, \mathrm{d}x \bigg)^{1/q}.
\end{align*}
Since the right side integrals are finite due to the Sobolev embeddings given in \eqref{embeddings} and $h \in L^{1}({\mathbb{R}^{N}}) \cap L^{\infty}({\mathbb{R}^{N}})$, the integral $I$ is well-defined on the product space $\mathbb{D}$.\\
\textbf{Case 2.}  When $N = \min\{6s_1, 6s_2\}.$\\
For $s_1 \geq s_2$ or $s_1 \leq s_2$, we have $\frac{2}{2_{s_1}^{*}} + \frac{1}{2_{s_2}^{*}} \leq 1$. The integral $I$ is already finite in the case when $\frac{2}{2_{s_1}^{*}} + \frac{1}{2_{s_2}^{*}} < 1$. Therefore, we consider $\frac{2}{2_{s_1}^{*}} + \frac{1}{2_{s_2}^{*}} = 1$. We apply H\"{o}lder's inequality with exponents $p=\frac{2_{s_1}^{*}}{2},~ q= 2_{s_2}^{*}$ and it follows that
\begin{align*}
    I &\leq \bigg( \int_{\mathbb{R}^{N}}h(x)|u|^{2_{s_1}^{*}}\, \mathrm{d}x \bigg)^{1/p} \bigg(\int_{\mathbb{R}^{N}}h(x)|v|^{2_{s_2}^{*}}\, \mathrm{d}x \bigg)^{1/q}\\
    &\leq \|h\|_{\infty}\bigg( \int_{\mathbb{R}^{N}} |u|^{2_{s_1}^{*}}\, \mathrm{d}x \bigg)^{1/p} \bigg(\int_{\mathbb{R}^{N}} |v|^{2_{s_2}^{*}}\, \mathrm{d}x \bigg)^{1/q},
\end{align*}
since the right side integrals are finite due to the Sobolev embeddings given in \eqref{embeddings} and $h \in L^{\infty}(\mathbb{R}^{N})$, the integral $I$ is also well-defined in this case on the product space $\mathbb{D}$.\\
Finally, we conclude that the right-hand side of \eqref{f2} is finite, and hence the functional $J_\nu$ is well-defined on the product space $\mathbb{D}$.\\
\textbf{(b). The functional $J_\nu$ is $C^1$ on $\mathbb{D}$ :} The functional is given by
\begin{align*}
    J_{\nu}(u,v) &= \frac{1}{2} \|(u,v)\|_{\mathbb{D}}^2 - \frac{1}{2_{s_{1}}^{*}} \|u\|_{2_{s_{1}}^{*}}^{2_{s_{1}}^{*}}  -\frac{1}{2_{s_{2}}^{*}}\|v\|_{2_{s_{2}}^{*}}^{2_{s_{2}}^{*}} - \nu \int_{\mathbb{R}^{N}}h(x)u^2v\, \mathrm{d}x\\
    &= \frac{1}{2} A(u,v) - B(u,v)-\nu I(u,v),
\end{align*} 
where $ A(u,v) = \|(u,v)\|_{\mathbb{D}}^2,~ B(u,v)= \frac{1}{2_{s_{1}}^{*}} \|u\|_{2_{s_{1}}^{*}}^{2_{s_{1}}^{*}}  +\frac{1}{2_{s_{2}}^{*}}\|v\|_{2_{s_{2}}^{*}}^{2_{s_{2}}^{*}} ,~ I(u,v)= \int_{\mathbb{R}^{N}}h(x)u^2v\, \mathrm{d}x$. If $A,B,I \in C^{1}$ on the product space $\mathbb{D}$, then the functional $J_\nu$ is also in $C^1$ on $\mathbb{D}$. It is clear that $A \in C^1$ as it is the square of a norm on $\mathbb{D}$.\\
\textbf{ $B$ is $C^1$ on $\mathbb{D}$:} For every $(\phi, \psi) \in \mathbb{D}$,
using mean value theorem, there exist $\lambda, \mu \in (0,1)$ such that for $0<|t|<1$, we have
\begin{align*}
    \bigg|\frac{B(u+t\phi, v+t \psi)-B(u,v)}{t} \bigg|& \leq \int_{\mathbb{R}^{N}} |u+\lambda t\phi|^{2_{s_1}^{*}-1}|\phi|\, \mathrm{d}x + \int_{\mathbb{R}^{N}} |v+\mu t\psi|^{2_{s_2}^{*}-1}|\psi|\, \mathrm{d}x\\
    & \leq \int_{\mathbb{R}^{N}} \big(|u|^{2_{s_1}^{*}-1}|\phi| + |\phi|^{2_{s_1}^{*}} \big)\, \mathrm{d}x + \int_{\mathbb{R}^{N}} \big(|v|^{2_{s_2}^{*}-1}|\psi| + |\psi|^{2_{s_2}^{*}} \big)\, \mathrm{d}x.
\end{align*}
The right-side integrals in the last inequality are finite by using the Sobolev embeddings given in \eqref{embeddings}. Therefore, using the dominated convergence theorem, we obtain
\begin{align*}
  \lim\limits_{t \rightarrow 0}  \frac{B(u+t\phi, v+t \psi)-B(u,v)}{t} = \int_{\mathbb{R}^{N}} |u|^{2_{s_1}^{*}-2}u \phi \, \mathrm{d}x + \int_{\mathbb{R}^{N}} |v|^{2_{s_2}^{*}-2}v \psi \, \mathrm{d}x.
\end{align*}
Thus we infer that the functional $B$ is Gateaux differentiable on the product space $\mathbb{D}$. Moreover, the Gateaux derivative of $B$ at $(u,v) \in \mathbb{D}$ is given by
\begin{equation} \label{Gateaux}
    \langle B'(u,v)|(\phi, \psi) \rangle = \int_{\mathbb{R}^{N}} |u|^{2_{s_1}^{*}-2}u \phi \, \mathrm{d}x + \int_{\mathbb{R}^{N}} |v|^{2_{s_2}^{*}-2}v \psi \, \mathrm{d}x, \text{ for every }(\phi, \psi) \in \mathbb{D}.
\end{equation}
It remains to check the continuity of the Gateaux derivative to prove that $B$ is Fr\"{e}chet differentiable. We consider a sequence $\{(u_n,v_n)\} \subset \mathbb{D}$ and $(u,v) \in \mathbb{D}$ such that $(u_n,v_n) \rightarrow (u,v)$ strongly in $\mathbb{D}$. Then up to a sub-sequences (still denoted by $(u_n,v_n)$ itself), $(u_n,v_n) \rightarrow (u,v)~\text{a.e. in } \mathbb{R}^N  $. Moreover, using $u_n\rightarrow u$ strongly in $L^{2_{s_1}^{*}}(\mathbb{R}^{N})$ and $v_n\rightarrow v$ strongly in $L^{2_{s_2}^{*}}(\mathbb{R}^{N})$ we have
\begin{align}
  |u_n(x)|\leq U(x) \text{ a.e. in } \mathbb{R}^N \text{ for some } U \in L^{2_{s_1}^{*}}(\mathbb{R}^{N}),\\
   |v_n(x)|\leq V(x) \text{ a.e. in } \mathbb{R}^N \text{ for some } V \in L^{2_{s_2}^{*}}(\mathbb{R}^{N}).
\end{align}
Now for every $(\phi, \psi) \in \mathbb{D}$,
\begin{align}
    \langle (B'(u_n,v_n)-B'(u,v))|(\phi, \psi) \rangle &= \int_{\mathbb{R}^{N}} (|u_n|^{2_{s_1}^{*}-2}u_n-|u|^{2_{s_1}^{*}-2}u) \phi \, \mathrm{d}x  \notag\\ &\qquad+\int_{\mathbb{R}^{N}} (|v_n|^{2_{s_2}^{*}-2}v_n -|v|^{2_{s_2}^{*}-2}v) \psi \, \mathrm{d}x.
\end{align}
Let us define
\[  E(u) = |u|^{2_{s_1}^{*}-2}u, ~F(v) = |v|^{2_{s_2}^{*}-2}v \text{ and } r_i = \frac{2_{s_i}^{*}}{2_{s_i}^{*}-1},~i=1,2. \]
Clearly, $E(u) \in L^{r_1}(\mathbb{R}^{N})$ and further
\[ |E(u_n)-E(u)|^{r_1} \leq c \big( |E(u_n)|^{r_1} + |E(u_n)|^{r_1}\big) \leq c \big( |U|^{2_{s_1}^{*}} + |u|^{2_{s_1}^{*}} \big) \in L^{1}(\mathbb{R}^{N}),\]
where $c$ is some positive constant varying line by line. By applying dominated convergence theorem, we obtain
\begin{equation}\label{limE}
    \lim\limits_{n \rightarrow \infty} \int_{\mathbb{R}^{N}}|E(u_n)-E(u)|^{r_1}\, \mathrm{d}x = 0.
\end{equation}
 Analogously we also obtain
\begin{equation}\label{limF}
    \lim\limits_{n \rightarrow \infty} \int_{\mathbb{R}^{N}}|F(v_n)-F(v)|^{r_2}\, \mathrm{d}x = 0,
\end{equation}
Consequently, by applying the H\"{o}lder's inequality we have
\begin{align*}
    \big| \langle (B'(u_n,v_n)-B'(u,v))|(\phi, \psi) \rangle\big| &\leq \int_{\mathbb{R}^{N}}|E(u_n)-E(u)| |\phi|\, \mathrm{d}x + \int_{\mathbb{R}^{N}}|F(v_n)-F(v)||\psi|\, \mathrm{d}x\\
    & \leq \|E(u_n)-E(u)\|_{r_1}\|\phi\|_{2_{s_1}^{*}} + \|F(v_n)-F(v)\|_{r_2}\|\psi\|_{2_{s_2}^{*}}.
\end{align*}
This further implies that
\begin{align*}
    \|B'(u_n,v_n)-B'(u,v)\|_{\mathbb{D}^{*}} \leq \|E(u_n)-E(u)\|_{r_1} + \|F(v_n)-F(v)\|_{r_2} \rightarrow 0 \text{ as } n \rightarrow \infty. 
\end{align*}
The above convergence holds due to \eqref{limE} and \eqref{limF}. Thus, we have proved that for every sequence $(u_n,v_n) \rightarrow (u,v)$ in the product space $\mathbb{D}$, there is a subsequence respect to which $B'$ is sequentially continuous. { From this, it is easy to conclude that $B'$ is sequentially continuous in all of $\mathbb{D}^{*}$}. Hence, $B$ is $C^1$ on the product space $\mathbb{D}$.\\
\textbf{$I$ is $C^1$ on $\mathbb{D}$:}
The functional $I$ is given by 
 \[I(u,v)= \int_{\mathbb{R}^{N}}h(x)u^2v\, \mathrm{d}x , \forall ~(u,v) \in \mathbb{D}. \]
For every $(\phi, \psi) \in \mathbb{D}$, we deduce
\begin{align} \label{I1}
    \frac{ I(u+t\phi, v+t \psi) - I(u,v)}{t}
     &= \int_{\mathbb{R}^{N}}h(x)\bigg(\frac{(u+t\phi)^{2}- u^2}{t} \bigg)(v+t \psi) \, \mathrm{d}x \notag\\ &\qquad+\int_{\mathbb{R}^{N}}h(x)u^2 \psi\, \mathrm{d}x.
 \end{align}
By mean value theorem, there exist $\lambda \in (0,1)$ such that for $0<|t|<1$, we have
\begin{align*}
    &\bigg|\frac{ I(u+t\phi, v+t \psi) - I(u,v)}{t}\bigg|\\
     &\leq 2 \int_{\mathbb{R}^{N}}h(x) |u| |\phi||v+ \psi| \, \mathrm{d}x+ 2\int_{\mathbb{R}^{N}}h(x)|\phi|^{2} |v+ \psi|\, \mathrm{d}x +
    \int_{\mathbb{R}^{N}}h(x)|u|^{2}|\psi| \, \mathrm{d}x.
\end{align*}
By the assumption $N \leq \min\{6s_1, 6s_2 \}$ and $h \in L^{1}(\mathbb{R}^{N}) \cap L^{\infty}(\mathbb{R}^{N})$, the three integrals in the last inequality are finite. Therefore, letting $t \rightarrow 0$ in \eqref{I1} and using dominated convergence theorem, we have
\begin{align*}
\begin{split}
    \lim\limits_{t \rightarrow 0} \frac{ I(u+t\phi, v+t \psi) - I(u,v)}{t}
     = 2 \int_{\mathbb{R}^{N}}h(x) u \phi v \, \mathrm{d}x + \int_{\mathbb{R}^{N}}h(x)u^2 \psi\, \mathrm{d}x.
     \end{split}
 \end{align*}
 Thus $I$ is Gateaux differentiable on the product space $\mathbb{D}$ and the Gateaux derivative of $I$ at $(u,v) \in \mathbb{D}$ is defined as
 \begin{align}\label{Gateaux I}
    \langle I'(u,v) | (\phi, \psi) \rangle= 2 \int_{\mathbb{R}^{N}}h(x) u \phi v \, \mathrm{d}x + \int_{\mathbb{R}^{N}}h(x)u^2 \psi\, \mathrm{d}x,
 \end{align}
 for every $(\phi, \psi) \in \mathbb{D}$.\\
 In order to check the Fr\"{e}chet differentiability of $I$, it remains to verify the continuity of the Gateaux derivative of $I$ given by $I'$ in \eqref{Gateaux I}.
 We consider a sequence $\{(u_n,v_n)\} \subset \mathbb{D}$ and $(u,v) \in \mathbb{D}$ such that $(u_n,v_n) \rightarrow (u,v)$ strongly in $\mathbb{D}$. Then up to a sub-sequences (still denoted by $(u_n,v_n)$ itself), $(u_n,v_n) \rightarrow (u,v)~\text{a.e. in } \mathbb{R}^N  $. Moreover, using $u_n\rightarrow u$ strongly in $L^{2_{s_1}^{*}}(\mathbb{R}^{N})$ and $v_n\rightarrow v$ strongly in $L^{2_{s_2}^{*}}(\mathbb{R}^{N})$ we have
\begin{align}
  |u_n(x)|\leq U_1(x) \text{ a.e. in } \mathbb{R}^N \text{ for some } U_1 \in L^{2_{s_1}^{*}}(\mathbb{R}^{N}),\\
   |v_n(x)|\leq V_1(x) \text{ a.e. in } \mathbb{R}^N \text{ for some } V_1 \in L^{2_{s_2}^{*}}(\mathbb{R}^{N}).
\end{align}
Now for every $(\phi, \psi) \in \mathbb{D}$,
\begin{align*}
 \displaystyle   &\langle (I'(u_n,v_n)-I'(u,v))|(\phi, \psi) \rangle \\
    &=2 \int_{\mathbb{R}^{N}} h(x) (u_n v_n-uv) \phi \, \mathrm{d}x +  \int_{\mathbb{R}^{N}} h(x) (u_{n}^2-u^2) \psi \, \mathrm{d}x.\\
    &= 2 \int_{\mathbb{R}^{N}} h(x) \big(u_n-u \big) v_n \phi \, \mathrm{d}x + 2 \int_{\mathbb{R}^{N}} h(x) \big(v_n -v \big) u\phi \, \mathrm{d}x + \int_{\mathbb{R}^{N}} h(x) \big(u_n^{2} -u^{2} \big) \psi \, \mathrm{d}x.
\end{align*}
Now using the boundedness of the sequences $\{u_n\}, \{v_n\}$ in $L^{2_{s_1}^{*}}(\mathbb{R}^{N})$ and $L^{2_{s_2}^{*}}(\mathbb{R}^{N})$ respectively, and $h \in L^{1}(\mathbb{R}^N) \cap L^{\infty}(\mathbb{R}^N)$ we obtain the following
\begin{align} \label{I2}
    \begin{split}
         \displaystyle   & \|(I'(u_n,v_n)-I'(u,v))\|_{\mathbb{D}^{*}} \leq c_1 \big\|u_n -u \big\|_{2_{s_1}^{*}}  + c_2 \big\|v_n - v \big\|_{2_{s_2}^{*}}
       + c_3 \big\|u_n -u \big\|_{\frac{2_{s_1}^{*}}{2}}.
    \end{split}
\end{align}
Let us define
\[  E_1(u) = u, \quad E_2(u) = |u|^2 \text{  and  } F_2(v) = v.\]
We have the following
\[ |E_1(u_n)-E_1(u)|^{2_{s_1}^{*}} \leq c \big( |E_1(u_n)|^{2_{s_1}^{*}} + |E_1(u_)|^{2_{s_1}^{*}}\big) \leq c \big( |U_1|^{2_{s_1}^{*}} + |u|^{2_{s_1}^{*}} \big) \in L^{1}(\mathbb{R}^{N}),\]
where $c$ is some positive constant varying line by line. By applying dominated convergence theorem, we obtain
\begin{equation}\label{limE1}
    \lim\limits_{n \rightarrow \infty} \int_{\mathbb{R}^{N}}|E_1(u_n)-E_1(u)|^{2_{s_1}^{*}}\, \mathrm{d}x = 0.
\end{equation}
Moreover, using similar set of arguments it hold that 
\begin{equation}\label{limE2}
    \lim\limits_{n \rightarrow \infty} \int_{\mathbb{R}^{N}}|E_2(u_n)-E_2(u)|^{\frac{2_{s_1}^{*}}{2}}\, \mathrm{d}x = 0,
\end{equation}
and 
\begin{equation}\label{limF2}
    \lim\limits_{n \rightarrow \infty} \int_{\mathbb{R}^{N}}|F_2(v_n)-F_2(v)|^{2_{s_2}^{*}}\, \mathrm{d}x = 0,
\end{equation}
Hence, by combining \eqref{I2},\eqref{limE1},\eqref{limE2} and \eqref{limF2} we conclude that
\[ \|(I'(u_n,v_n)-I'(u,v))\|_{\mathbb{D}^{*}} \rightarrow 0 \text{  as  } n \rightarrow \infty. \]
Thus, we have proved that for every sequence $(u_n,v_n) \rightarrow (u,v)$ in the product space $\mathbb{D}$, there is a subsequence respect to which $I'$ is sequentially continuous. { From this, it is easy to conclude that $I'$ is sequentially continuous in all of $\mathbb{D}^{*}$}. Hence, $I$ is $C^1$ on the product space $\mathbb{D}$.\\
Since $A,B$ and $I$ all are $C^1$ functional on the product space $\mathbb{D}$, we conclude that the functional $J_\nu$ is also $C^1$ on $\mathbb{D}$.\\
 For  $(u_0,v_0) \in \mathbb{D}$, the Fr\'{e}chet derivative of $J_\nu$ at $(u,v) \in \mathbb{D}$ is given as follow 
\begin{align*}
     \langle J'_{\nu}(u,v) | (u_0,v_0)\rangle &= \iint_{\mathbb{R}^{2N}} \frac{(u(x)-u(y))(u_0(x)-u_0(y))}{|x-y|^{N+2s_{1}}} \,\mathrm{d}x\mathrm{d}y  \\ &\quad+ \iint_{\mathbb{R}^{2N}} \frac{(v(x)-v(y))(v_0(x)-v_0(y))}{|x-y|^{N+2s_{2}}} \,\mathrm{d}x\mathrm{d}y  - \lambda_1 \int_{\mathbb{R}^{N}} \frac{u\cdot u_0}{|x|^{2s_{1}}}\,\mathrm{d}x\\ &\quad- \lambda_2 \int_{\mathbb{R}^{N}} \frac{v \cdot v_0}{|x|^{2s_2}}\,\mathrm{d}x - \int_{\mathbb{R}^{N}} |u|^{{2^{*}_{s_{1}}}-2} u \cdot u_0 \,\mathrm{d}x
        - \int_{\mathbb{R}^{N}} |v|^{{2^{*}_{s_{2}}}-2} v \cdot v_0 \,\mathrm{d}x\\
       &\quad - 2\nu  \int_{\mathbb{R}^{N}}h(x)u \cdot u_0 v\,\mathrm{d}x
        - \nu  \int_{\mathbb{R}^{N}}h(x) u^{2} v_0 \,\mathrm{d}x,
\end{align*}
where $J'_{\nu}(u,v)$ is the Fr\'{e}chet derivative of $J_\nu$ at $(u,v) \in \mathbb{D}$, and the duality bracket between the product space $\mathbb{D}$ and its dual $\mathbb{D}^*$ is represented as $\langle \cdot,\cdot \rangle$.
From \eqref{energy functional} and for any $\tau>0$, we get
\begin{small}\begin{align} 
    J_{\nu}(\tau u,\tau v) &= \frac{\tau^2}{2} \iint_{\mathbb{R}^{2N}} \frac{|u(x)-u(y)|^{2}}{|x-y|^{N+2s_{1}}} \mathrm{d}x\mathrm{d}y + \frac{\tau^2}{2} \iint_{\mathbb{R}^{2N}} \frac{|v(x)-v(y)|^{2}}{|x-y|^{N+2s_{2}}} \mathrm{d}x\mathrm{d}y -  \frac{\lambda_{1} \tau^2}{2} \int_{\mathbb{R}^{N}} \frac{u^{2}~~}{|x|^{2s_{1}}}\mathrm{d}x \notag\\
    &\quad - \frac{\lambda_{2}\tau^2}{2} \int_{\mathbb{R}^{N}} \frac{v^{2}~~}{|x|^{2s_{2}}}\mathrm{d}x - \frac{\tau^{2_{s_{1}}^{*}}}{2_{s_{1}}^{*}}\int_{\mathbb{R}^{N}} |u|^{2_{s_{1}}^{*}}\mathrm{d}x  -\frac{\tau^{2_{s_{2}}^{*}}}{2_{s_{2}}^{*}}\int_{\mathbb{R}^{N}} |v|^{2_{s_{2}}^{*}}\mathrm{d}x - \nu \tau^{3}\int_{\mathbb{R}^{N}}h(x) u^{2} v\,\mathrm{d}x.
\end{align} \end{small}
Clearly, $J_{\nu}(\tau u_{},\tau v_{}) \rightarrow -\infty ~\text{as}~\tau \rightarrow +\infty$ which implies that the functional $J_\nu$ is unbounded from below on $\mathbb{D}$. Here the concept of Nehari manifold plays its role in minimizing the functional $J_{\nu}$ for finding the critical point in $\mathbb{D}$ by using a variational approach. We introduce the Nehari manifold $\mathcal{N}_{\nu}$ associated with the functional $J_\nu$ as
\[\mathcal{N}_{\nu} = \{  (u, v) \in \mathbb{D} \backslash \{(0, 0)\} : \Phi_{\nu}(u,v) = 0    \},\]
where 
\begin{equation} \label{phi function}
    \Phi_{\nu}(u,v) = \langle J'_{\nu}(u,v) | (u,v) \rangle.
\end{equation}
We can see that all the critical points $(u,v) \in \mathbb{D}\backslash \{(0,0)\}$ of the energy functional $J_\nu$ lie in the set $\mathcal{N}_{\nu}$. On Nehari manifolds, we recall some well-known facts for the reader's convenience.

Let $(u,v)$ be an element of the Nehari manifold $\mathcal{N}_{\nu}$. Then the following holds:
\begin{align} \label{equivalent norm}
\begin{split}
    \|(u,v)\|_{\mathbb{D}}^{2} 
    = \|u\|_{{2_{s_{1}}^{*}}}^{2_{s_{1}}^{*}} + \|v\|_{{2_{s_{2}}^{*}}}^{2_{s_{2}}^{*}} +3 \nu  \int_{\mathbb{R}^{N}} h(x) u^{2} v \mathrm{d}x.
\end{split}
\end{align}
If we restrict the functional $J_\nu$ on the Nehari manifold $\mathcal{N}_{\nu}$, the functional takes the following form
\begin{align} \label{energy functional on Nehari manifold}
    J_{\nu}|_{\mathcal{N}_{\nu}}(u,v) = \frac{s_{1}}{N} \|u\|_{{2_{s_{1}}^{*}}}^{2_{s_{1}}^{*}} + \frac{s_{2}}{N} \|v\|_{{2_{s_{2}}^{*}}}^{2_{s_{2}}^{*}} +  \frac{\nu}{2}   \int_{\mathbb{R}^{N}} h(x) u^{2} v \,\mathrm{d}x.
\end{align}
Now suppose that $(\tau u,\tau v) \in \mathcal{N}_{\nu}$ for all $(u, v) \in \mathbb{D} \backslash \{(0, 0)\}$. Then using \eqref{equivalent norm} we get the following 
\begin{align} \label{Algebraic equation}
     \|(u,v)\|_{\mathbb{D}}^{2} =  \tau^{2_{s_{1}}^{*} -2} \|u\|_{{2_{s_{1}}^{*}}}^{2_{s_{1}}^{*}} +  \tau^{2_{s_{2}}^{*} -2} \|v\|_{{2_{s_{2}}^{*}}}^{2_{s_{2}}^{*}} + 3\nu  \tau \int_{\mathbb{R}^{N}} h(x) u^{2} v\,\mathrm{d}x.
\end{align}
The above equation is an algebraic equation in $\tau$ and a cautious analysis of equation \eqref{Algebraic equation} shows that this algebraic equation has a unique positive solution. Thus, we can infer that there exists a unique positive $\tau= \tau_{(u,v)}$ such that $(\tau u,\tau v) \in \mathcal{N}_{\nu}$ for all $(u, v) \in \mathbb{D} \backslash \{(0, 0)\}$. By combining (\ref{equivalent norm}) and (\ref{ alpha beta condition}) we obtain that, for any $(u,v) \in \mathcal{N}_{\nu}$ 
\begin{align*} 
       J''_{\nu}(u,v)[u,v]^{2} &= \langle \Phi'_{\nu}(u,v) | (u,v) \rangle  \\
        &= 2 \iint_{\mathbb{R}^{2N}} \frac{|u(x)-u(y)|^{2}}{|x-y|^{N+2s_{1}}} \mathrm{d}x\mathrm{d}y + 2 \iint_{\mathbb{R}^{2N}} \frac{|v(x)-v(y)|^{2}}{|x-y|^{N+2s_{2}}} \mathrm{d}x\mathrm{d}y  
    -  2\lambda_{1} \int_{\mathbb{R}^{N}} \frac{u^{2}}{|x|^{2s_{1}}}\mathrm{d}x\\
    &~~~~ - 2\lambda_{2}\int_{\mathbb{R}^{N}} \frac{v^{2}}{|x|^{2s_{2}}}dx -2_{s_{1}}^{*} \int_{\mathbb{R}^{N}} |u|^{2_{s_{1}}^{*}}\mathrm{d}x 
    -2_{s_{2}}^{*}\int_{\mathbb{R}^{N}} |v|^{2_{s_{2}}^{*}}\mathrm{d}x - 9\nu  \int_{\mathbb{R}^{N}} h(x) u^{2} v\,\mathrm{d}x\\ 
    &= 2 \|(u,v)\|_{\mathbb{D}}^{2}-2_{s_{1}}^{*} \|u\|_{2_{s_{1}}^{*}}^{2_{s_{1}}^{*}} 
    -2_{s_{2}}^{*}\|v\|_{2_{s_{2}}^{*}}^{2_{s_{2}}^{*}} -9 \nu  \int_{\mathbb{R}^{N}} h(x) u^{2} v\,\mathrm{d}x.
    \end{align*} Further calculations give us
    \begin{align} \label{second order derivative}
    J''_{\nu}(u,v)[u,v]^{2}&=- \|(u,v)\|_{\mathbb{D}}^{2} + 3 ( \|u\|_{{2_{s_{1}}^{*}}}^{2_{s_{1}}^{*}}+ \|v\|_{{2_{s_{2}}^{*}}}^{2_{s_{2}}^{*}} )
        - 2_{s_{1}}^{*} \|u\|_{{2_{s_{1}}^{*}}}^{2_{s_{1}}^{*}} - 2_{s_{2}}^{*}\|v\|_{{2_{s_{2}}^{*}}}^{2_{s_{2}}^{*}}\notag\\
       & =- \|(u,v)\|_{\mathbb{D}}^{2} + (3 -2_{s_{1}}^{*} )  \|u\|_{{2_{s_{1}}^{*}}}^{2_{s_{1}}^{*}}+ (3 -2_{s_{2}}^{*})\|v\|_{{2_{s_{2}}^{*}}}^{2_{s_{2}}^{*}} ~< 0 .
 \end{align}
Further, by using \eqref{equivalent norm} we can prove the existence of a constant $r_{\nu} > 0$ such that
\begin{align} \label{ norm equal  r}
    \|(u,v)\|_{\mathbb{D}} > r_{\nu} ~~\mbox{for~all}~(u,v) \in \mathcal{N}_{\nu}.
\end{align}
Now by the Lagrange multiplier method, if $(u, v) \in \mathbb{D}$ is a critical point of $J_{\nu}$ on the Nehari manifold $ \mathcal{N}_{\nu}$,  then there exists a $\rho \in \mathbb{R}$ called Lagrange multiplier such that
\[ (J_{\nu}|_{\mathcal{N}_{\nu} } )'(u,v) =  J_{\nu}'(u,v) - \rho \Phi'_{\nu}(u,v) = 0.\]
Thus from the above, we calculate that $\rho \langle \Phi'_{\nu}(u,v)|(u,v) \rangle = \langle J'_{\nu}(u,v)|(u,v) \rangle =0 $. It is clear that $\rho=0$, otherwise the inequality (\ref{second order derivative}) fails to hold and as a result $J_{\nu}'(u,v) = 0$. Hence, there is a one-to-one correspondence between the critical points of $J_{\nu}$ and the critical points of $J_{\nu}|_{\mathcal{N}_{\nu}}$. The functional $J_{\nu}$ restricted on the Nehari manifold ${\mathcal{N}_{\nu}}$ is also written as
\begin{equation} \label{two one four}
        ( J_{\nu}|_{\mathcal{N}_{\nu}})(u,v) = \frac{1}{6} \|(u,v)\|_{\mathbb{D}}^{2} +  \frac{6s_1-N}{6N} \|u\|_{{2_{s_{1}}^{*}}}^{2_{s_{1}}^{*}}+ \frac{6s_2 -N}{6N}\|v\|_{{2_{s_{2}}^{*}}}^{2_{s_{2}}^{*}}. 
\end{equation} 
Thus, combining the hypotheses (\ref{ alpha beta condition}) and (\ref{ norm equal  r}) with \eqref{two one four}, we deduce
\[J_{\nu}(u,v) >  \frac{1}{6} r_{\nu}^{2} ~~\mbox{for~all}~(u,v) \in \mathcal{N}_{\nu}.\]
We come to the conclusion that the functional $J_\nu$ restricted on $\mathcal{N}_{\nu}$ is bounded from below. Hence, we continue our study to get the solution of \eqref{main problem} by minimizing the energy functional $J_\nu$ on the Nehari manifold $\mathcal{N}_\nu$.

\section{The Palais-Smale Condition}\label{S3}
\begin{lemma} \label{equivalent of critical points lemma}
Let us assume that (\ref{ alpha beta condition}) and (\ref{condition on h}) are satisfied and also that $\{(u_{n},v_{n})\} \subset \mathcal{N}_{\nu}$ is a Palais-Smale sequence for $J_{\nu}$ restricted on the Nehari manifold ${\mathcal{N}_{\nu}}$ at level $c \in \mathbb{R}$, then $\{(u_{n},v_{n})\}$ is a bounded (PS) sequence for $J_{\nu}$ in $\mathbb{D}$, i.e.,
\begin{equation} \label{palais smale condition}
    J'_{\nu}(u_{n},v_{n}) \rightarrow 0 ~\mbox{as}~n \rightarrow \infty~\mbox{in the dual space}~\mathbb{D}^{*}.
\end{equation}
\end{lemma}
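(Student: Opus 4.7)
The plan is to handle boundedness and the vanishing of the Lagrange multiplier separately, both being standard ingredients for transferring a Palais--Smale condition from the Nehari manifold to the full space $\mathbb{D}$.

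First I would establish that the sequence $\{(u_n,v_n)\}$ is bounded in $\mathbb{D}$. Since $(u_n,v_n)\in\mathcal{N}_\nu$, the Nehari identity $\|u_n\|_{\lambda_1}^2+\|v_n\|_{\lambda_2}^2=\int_{\mathbb{R}^N}|u_n|^{2^*_{s_1}}+\int_{\mathbb{R}^N}|v_n|^{2^*_{s_2}}+3\nu\int_{\mathbb{R}^N}h\,u_n^{2}v_n$ holds (the coupling term has homogeneity $3$). Plugging this into $J_\nu(u_n,v_n)$ produces
\[
J_\nu(u_n,v_n)=\frac{s_1}{N}\!\!\int_{\mathbb{R}^N}\!|u_n|^{2^*_{s_1}}+\frac{s_2}{N}\!\!\int_{\mathbb{R}^N}\!|v_n|^{2^*_{s_2}}+\frac{\nu}{2}\!\!\int_{\mathbb{R}^N}\!h\,u_n^{2}v_n,
\]
a sum of nonnegative terms under assumption \eqref{condition on h}. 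Since $J_\nu(u_n,v_n)\to c$, each term on the right is bounded, and feeding this back into the Nehari identity bounds $\|u_n\|_{\lambda_1}^2+\|v_n\|_{\lambda_2}^2$.

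Next I would set $G(u,v):=\langle J'_\nu(u,v),(u,v)\rangle$, so that $\mathcal{N}_\nu=G^{-1}(0)\setminus\{(0,0)\}$. The constrained Palais--Smale hypothesis yields Lagrange multipliers $\mu_n\in\mathbb{R}$ with
\[
J'_\nu(u_n,v_n)=\mu_n\,G'(u_n,v_n)+o(1)\quad\text{in }\mathbb{D}^{*}.
\]
Testing this against $(u_n,v_n)$ and using $\langle J'_\nu(u_n,v_n),(u_n,v_n)\rangle=0$ on $\mathcal{N}_\nu$ reduces the problem to showing $\mu_n\to 0$, which in turn requires a lower bound of the form $\langle G'(u_n,v_n),(u_n,v_n)\rangle\le -\delta<0$. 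A direct computation gives
\[
\langle G'(u_n,v_n),(u_n,v_n)\rangle=-\tfrac{4s_1}{N-2s_1}\!\!\int_{\mathbb{R}^N}\!|u_n|^{2^*_{s_1}}-\tfrac{4s_2}{N-2s_2}\!\!\int_{\mathbb{R}^N}\!|v_n|^{2^*_{s_2}}-3\nu\!\!\int_{\mathbb{R}^N}\!h\,u_n^{2}v_n,
\]
which is manifestly strictly negative; the issue is proving it is bounded away from zero uniformly in $n$.

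The main obstacle will be this uniform nondegeneracy. I would argue it by first showing that elements of $\mathcal{N}_\nu$ cannot accumulate at $(0,0)$: applying the Hardy--Sobolev inequality (with constants $S(\lambda_i)$) and H\"older to the Nehari identity yields
\[
\|u_n\|_{\lambda_1}^2+\|v_n\|_{\lambda_2}^2\le C\bigl(\|u_n\|_{\lambda_1}^{2^*_{s_1}}+\|v_n\|_{\lambda_2}^{2^*_{s_2}}+\nu\|u_n\|_{\lambda_1}^{2}\|v_n\|_{\lambda_2}\bigr),
\]
and since $2^*_{s_i}>2$, this forces $\|(u_n,v_n)\|\ge c_0>0$ uniformly. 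The same identity then gives a uniform positive lower bound for $\int|u_n|^{2^*_{s_1}}+\int|v_n|^{2^*_{s_2}}+\nu\int h\,u_n^{2}v_n$, which is exactly what is needed for $\langle G'(u_n,v_n),(u_n,v_n)\rangle\le-\delta$.

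Finally, with $(u_n,v_n)$ bounded and the Lagrange identity tested against $(u_n,v_n)$, I conclude $\mu_n\langle G'(u_n,v_n),(u_n,v_n)\rangle=o(1)$, hence $\mu_n\to 0$. Since $G'$ maps bounded sets of $\mathbb{D}$ to bounded sets of $\mathbb{D}^*$, the relation $J'_\nu(u_n,v_n)=\mu_n G'(u_n,v_n)+o(1)$ yields $J'_\nu(u_n,v_n)\to 0$ in $\mathbb{D}^*$, which is \eqref{palais smale condition}. Boundedness of $\{(u_n,v_n)\}$ was already established, so the lemma follows.
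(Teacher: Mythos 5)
Your overall strategy coincides with the paper's: boundedness from the Nehari constraint, then vanishing of the Lagrange multiplier via a uniform negative upper bound on $\langle \Phi_\nu'(u_n,v_n)\mid(u_n,v_n)\rangle$ (your $G$ is the paper's $\Phi_\nu$), and your treatment of the multiplier step is essentially sound. However, the boundedness step as written has a genuine gap. The identity $J_\nu(u_n,v_n)=\tfrac{s_1}{N}\|u_n\|_{2^*_{s_1}}^{2^*_{s_1}}+\tfrac{s_2}{N}\|v_n\|_{2^*_{s_2}}^{2^*_{s_2}}+\tfrac{\nu}{2}\int_{\mathbb{R}^N}h\,u_n^2v_n$ is correct on $\mathcal{N}_\nu$, but it is \emph{not} a sum of nonnegative terms: the hypothesis on $h$ controls $h$ and $u_n^2$, not $v_n$, and elements of $\mathcal{N}_\nu$ carry no sign restriction, so $\int h\,u_n^2v_n$ may be negative and a priori unbounded below. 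Hence you cannot conclude that each term on the right is individually bounded, and "feeding back into the Nehari identity" does not bound the norm. Repairing this by H\"older and Young on the coupling term also fails in the borderline case $N=\min\{6s_1,6s_2\}$, where $3=2^*_{s_i}$ and the constant produced by Young's inequality cannot be made small.

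The fix, and what the paper does, is to take the combination $J_\nu(u_n,v_n)-\tfrac13\langle J_\nu'(u_n,v_n)\mid(u_n,v_n)\rangle$, i.e.\ subtract one third of the Nehari identity from $J_\nu$: the cubic coupling term then cancels exactly, leaving $\tfrac16\|(u_n,v_n)\|_{\mathbb{D}}^2+\tfrac{6s_1-N}{6N}\|u_n\|_{2^*_{s_1}}^{2^*_{s_1}}+\tfrac{6s_2-N}{6N}\|v_n\|_{2^*_{s_2}}^{2^*_{s_2}}=c+o(1)$, whose remaining coefficients are nonnegative precisely under the standing assumption $N\le\min\{6s_1,6s_2\}$; this yields $\|(u_n,v_n)\|_{\mathbb{D}}^2\le 6c+o(1)$. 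The same sign issue reappears in your phrase "manifestly strictly negative" for $\langle G'(u_n,v_n)\mid(u_n,v_n)\rangle$, but there your actual argument does not depend on it: once $\|(u_n,v_n)\|_{\mathbb{D}}\ge c_0>0$ is established, the comparison $2^*_{s_i}-2\ge 1$ (equivalent to $N\le 6s_i$) gives $-\langle G'(u_n,v_n)\mid(u_n,v_n)\rangle\ge\|u_n\|_{2^*_{s_1}}^{2^*_{s_1}}+\|v_n\|_{2^*_{s_2}}^{2^*_{s_2}}+3\nu\int h\,u_n^2v_n=\|(u_n,v_n)\|_{\mathbb{D}}^2\ge c_0^2$, which is the uniform nondegeneracy you need; that coefficient comparison should be made explicit rather than appealing to term-by-term positivity.
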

\begin{proof}
Assume that $\{(u_{n},v_{n})\}\subset \mathcal{N}_{\nu} $ be a Palais-Smale sequence for $J_{\nu}$ at level $c$, then 
\begin{align}
    \begin{split}
        J(u_{n},v_{n}) \rightarrow c~~as~~n \rightarrow \infty ,~\text{i.e.,}~ c+ o(1) = J(u_{n},v_{n}) \hspace{1cm}
    \end{split}
\end{align}
and we recall that
\begin{align}
     J(u_{n},v_{n}) = \frac{1}{2} \|(u_{n},v_{n})\|_{\mathbb{D}}^{2} - \frac{1}{2_{s_{1}}^{*}}\|u_{n}\|_{{2_{s_{1}}^{*}}}^{2_{s_{1}}^{*}} - \frac{1}{2_{s_{2}}^{*}}\|v_{n}\|_{{2_{s_{2}}^{*}}}^{2_{s_{2}}^{*}} - \nu \int_{\mathbb{R}^{N}} h(x) u_{n}^2 v_{n}\,\mathrm{d}x.
\end{align}
For $(u_{n},v_{n}) \in \mathcal{N}_{\nu},$ we have
\begin{align}
      \|(u_{n},v_{n})\|_{\mathbb{D}}^{2} =\|u_{n}\|_{{2_{s_{1}}^{*}}}^{2_{s_{1}}^{*}} + \|v_{n}\|_{{2_{s_{2}}^{*}}}^{2_{s_{2}}^{*}} + 3\nu  \int_{\mathbb{R}^{N}} h(x) u_{n}^2 v_{n}\,\mathrm{d}x .
\end{align}
By combining the above two equations, we get
\begin{align*}
    J(u_{n},v_{n}) &= \frac{1}{2} \|(u_{n},v_{n})\|_{\mathbb{D}}^{2} - \frac{1}{2_{s_{1}}^{*}}\|u_{n}\|_{{2_{s_{1}}^{*}}}^{2_{s_{1}}^{*}} - \frac{1}{2_{s_{2}}^{*}}\|v_{n}\|_{{2_{s_{2}}^{*}}}^{2_{s_{2}}^{*}}
    -\frac{1}{3} \bigg( \|(u_{n},v_{n})\|_{\mathbb{D}}^{2}-\|u_{n}\|_{{2_{s_{1}}^{*}}}^{2_{s_{1}}^{*}} -\|v_{n}\|_{{2_{s_{2}}^{*}}}^{2_{s_{2}}^{*}} \bigg) \\
    &= \frac{1}{6} \|(u_{n},v_{n})\|_{\mathbb{D}}^{2}+ \frac{6s_1 -N}{6N}\|u_{n}\|_{{2_{s_{1}}^{*}}}^{2_{s_{1}}^{*}} + \frac{6s_2 -N}{6N}\|v_{n}\|_{{2_{s_{2}}^{*}}}^{2_{s_{2}}^{*}}.\\
  & \geq \frac{1}{6} \|(u_{n},v_{n})\|_{\mathbb{D}}^{2}
\end{align*}
Thus, we have
\[c + o(1) \geq \frac{1}{6} \|(u_{n},v_{n})\|_{\mathbb{D}}^{2}.\]
Thus the sequence $\{ (u_{n},v_{n})\}$ is bounded in $\mathbb{D}$. Furthermore, we deduce the following by considering the functional $\Phi_{\nu}$ given by (\ref{phi function}) with the inequalities (\ref{second order derivative}) and (\ref{ norm equal  r})  
\begin{align} \label{phi r inequality}
    \langle\Phi'_{\nu}(u_{n},v_{n})|(u_{n},v_{n})\rangle \leq - r_{\nu}^{2}.
\end{align}
By the Lagrange multiplier method, we can assume the sequence of multipliers $\{ \omega_{n}\} \subset \mathbb{R}$ such that
\begin{align} \label{J restricted on N with LM}
     (J_{\nu}|_{\mathcal{N}_{\nu} } )'(u_{n},v_{n}) =  J_{\nu}'(u_{n},v_{n}) - \omega_{n} \Phi'_{\nu}(u_{n},v_{n}) ~\text{in the dual space}~\mathbb{D}^{*}.
\end{align}
Since $(J_{\nu}|_{\mathcal{N}_{\nu} } )'(u_{n},v_{n})$ converges to $0$ as $n \rightarrow \infty$ in the dual space $\mathbb{D}^{*}$, this implies that $$\langle (J_{\nu}|_{\mathcal{N}_{\nu} } )'  (u_{n},v_{n}) |  (u_{n},v_{n}) \rangle \to 0 \text{ as } n \rightarrow \infty. $$ This further implies that $ - \omega_{n} \langle \Phi'_{\nu}(u_{n},v_{n}) | (u_{n},v_{n}) \rangle  \to 0$. Finally, we know that $$\langle \Phi'_{\nu}(u_{n},v_{n}) | (u_{n},v_{n}) \rangle <0$$ and hence, we have $\omega_{n} \rightarrow 0 ~\text{in}~\mathbb{R}~\text{as}~n \rightarrow \infty$. Thus, (\ref{J restricted on N with LM}) directly implies (\ref{palais smale condition}).
\end{proof}
 Further, we prove the boundedness of the Palais-Smale sequence in $\mathbb{D}$.
\begin{lemma} \label{boundedness lemma}
Let us assume that (\ref{ alpha beta condition}) and (\ref{condition on h}) are satisfied and that $\{(u_{n},v_{n})\} \subset \mathbb{D}$ be a (PS) sequence for the functional $J_{\nu}$ at level $c \in \mathbb{R}$. Then the sequence $\{(u_{n},v_{n})\}$ is bounded in $\mathbb{D}$.
\end{lemma}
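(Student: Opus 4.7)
The plan is to mimic the algebraic manipulation already used in Lemma \ref{equivalent of critical points lemma}, but now without the Nehari-manifold constraint, exploiting the full strength of the hypothesis $J'_\nu(u_n,v_n)\to 0$ in $\mathbb{D}^*$. The key observation is that the coupling integral $\nu\int_{\mathbb{R}^N} h(x)\,u_n^2 v_n\,\mathrm{d}x$ is homogeneous of degree $3$, so it cancels exactly in the combination $J_\nu(u_n,v_n) - \tfrac{1}{3}\langle J'_\nu(u_n,v_n),(u_n,v_n)\rangle$.

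First I would compute this combination term by term. A direct calculation, identical to the one in the proof of Lemma \ref{equivalent of critical points lemma}, yields
\begin{align*}
J_\nu(u_n,v_n) - \tfrac{1}{3}\langle J'_\nu(u_n,v_n),(u_n,v_n)\rangle
= \tfrac{1}{6}\|(u_n,v_n)\|_{\mathbb{D}}^{2} + \tfrac{6s_1-N}{6N}\|u_n\|_{2_{s_1}^{*}}^{2_{s_1}^{*}} + \tfrac{6s_2-N}{6N}\|v_n\|_{2_{s_2}^{*}}^{2_{s_2}^{*}}.
\end{align*}
Under hypothesis \eqref{ alpha beta condition}, namely $N\le\min\{6s_1,6s_2\}$, the coefficients $\tfrac{6s_i-N}{6N}$ are non-negative, so the last two summands may be dropped, giving
\[
\tfrac{1}{6}\|(u_n,v_n)\|_{\mathbb{D}}^{2} \;\le\; J_\nu(u_n,v_n) - \tfrac{1}{3}\langle J'_\nu(u_n,v_n),(u_n,v_n)\rangle.
\]

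Next I would invoke the (PS) property: $J_\nu(u_n,v_n)=c+o(1)$ and $\|J'_\nu(u_n,v_n)\|_{\mathbb{D}^*}=o(1)$, whence by duality $|\langle J'_\nu(u_n,v_n),(u_n,v_n)\rangle|\le o(1)\,\|(u_n,v_n)\|_{\mathbb{D}}$. Substituting into the previous inequality produces the quadratic-versus-linear estimate
\[
\tfrac{1}{6}\|(u_n,v_n)\|_{\mathbb{D}}^{2} \;\le\; c + o(1) + o(1)\,\|(u_n,v_n)\|_{\mathbb{D}},
\]
which immediately forces $\|(u_n,v_n)\|_{\mathbb{D}}$ to remain bounded as $n\to\infty$.

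There is no real obstacle here; this is a textbook Ambrosetti--Rabinowitz-style argument adapted to the coupled system. The only points to keep in mind are: (i) assumption \eqref{condition on h} on $h$ is not needed beyond making the coupling term well defined, since that term cancels outright; and (ii) in the borderline critical case $N=6s_i$ the corresponding coefficient vanishes rather than being strictly positive, but this is harmless because the bound only relies on non-negativity, not strict positivity. The cubic homogeneity of the nonlinearity $h(x)u^2v$ is precisely what makes the scalar $\tfrac{1}{3}$ the right choice to kill the coupling contribution.
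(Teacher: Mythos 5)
Your proposal is correct and follows essentially the same route as the paper: both form the combination $J_\nu(u_n,v_n)-\tfrac{1}{3}\langle J'_\nu(u_n,v_n),(u_n,v_n)\rangle$ so that the cubic coupling term cancels, use $N\le\min\{6s_1,6s_2\}$ to discard the non-negative critical-norm terms, and conclude from $\tfrac{1}{6}\|(u_n,v_n)\|_{\mathbb{D}}^2\le c+o(1)+o(1)\|(u_n,v_n)\|_{\mathbb{D}}$. Your write-up is in fact slightly cleaner than the paper's, which tests $J'_\nu$ against the normalized pair but then states the estimate one would obtain by testing against $(u_n,v_n)$ itself, exactly as you do.
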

\begin{proof}
Given $\{(u_{n},v_{n})\}\subset \mathbb{D}$ is a (PS) sequence for $J_{\nu}$ at level $c$, then as $n \rightarrow \infty$
\begin{align}
    J_{\nu}(u_{n},v_{n}) &\rightarrow c~\text{in}~\mathbb{R} ,\label{e1}\\
     J'_{\nu}(u_{n},v_{n}) &\rightarrow 0 ~\text{in} ~\mathbb{D}^*. \label{e2}
\end{align}
Using \eqref{e2}, we can write
\begin{align*}
    \bigg< J'_{\nu}(u_{n},v_{n}) \bigg| \frac{(u_{n},v_{n})}{\|(u_{n},v_{n})\|_{\mathbb{D}}} \bigg> \rightarrow 0 ~\text{in}~\mathbb{R}.
\end{align*}
Thus from the above, we have
    \begin{align*}
     \|(u_{n},v_{n})\|_{\mathbb{D}}^{2} -\|u_{n}\|_{{2_{s_{1}}^{*}}}^{2_{s_{1}}^{*}} - \|v_{n}\|_{{2_{s_{2}}^{*}}}^{2_{s_{2}}^{*}} - 3\nu  \int_{\mathbb{R}^{N}} h(x) u_{n}^2 v_{n}\,\mathrm{d}x = o(\|(u_{n},v_{n})\|_{\mathbb{D}})~\text{as}~n \rightarrow \infty.
   \end{align*}
Also, from \eqref{e1} we obtain the following
\begin{align*}
   \frac{1}{2} \|(u_{n},v_{n})\|_{\mathbb{D}}^{2} - \frac{1}{2_{s_{1}}^{*}}\|u_{n}\|_{{2_{s_{1}}^{*}}}^{2_{s_{1}}^{*}} - \frac{1}{2_{s_{2}}^{*}}\|v_{n}\|_{{2_{s_{2}}^{*}}}^{2_{s_{2}}^{*}} - \nu \int_{\mathbb{R}^{N}} h(x) u_{n}^2 v_{n}\,\mathrm{d}x = c+o(1) ~\text{as}~n \rightarrow \infty.
\end{align*}
Thus, we can write
\begin{align*}
   J_{\nu}(u_{n},v_{n}) - \frac{1}{3} \langle J'_{\nu}(u_{n},v_{n}) |  \frac{(u_{n},v_{n})}{\|(u_{n},v_{n})\|_{\mathbb{D}}} \rangle = c + o(1)+ o(\|(u_{n},v_{n})\|_{\mathbb{D}}) ~\text{as}~n \rightarrow \infty,
\end{align*}
and hence,
\begin{align*}
    \frac{1}{6} \|(u_{n},v_{n})\|_{\mathbb{D}}^{2} &\leq \frac{1}{6} \|(u_{n},v_{n})\|_{\mathbb{D}}^{2}+ \frac{6s_1 -N}{6N}  \|u_{n}\|_{{2_{s_{1}}^{*}}}^{2_{s_{1}}^{*}}+ \frac{6s_2 -N}{6N} \|v_{n}\|_{{2_{s_{2}}^{*}}}^{2_{s_{2}}^{*}}\\
    &= c + o(1)+ o(\|(u_{n},v_{n})\|_{\mathbb{D}}) ~\text{as}~ n\rightarrow \infty.
\end{align*}
Thus we can conclude that the sequence $\{(u_{n},v_{n})\}$ is bounded in $\mathbb{D}$.
\end{proof}
Now we derive the non-local version of Lemma 3.3 in \cite{Abdellaoui2009}.
\begin{lemma}\label{Abdelloui fractional version}
Let $C,D>0$ and $\delta \geq 2$ be fixed. Also, assume that for any $\nu >0$
$$ T_{\nu} = \{ \varrho \in \mathbb{R}^{+} ~~|~~ C\varrho^{\frac{N-2s}{N}} \leq \varrho + D\nu \varrho^{\frac{\delta}{2} \big(\frac{N-2s}{N}\big)} \}.$$
Then for every $\epsilon>0,$ there is a $\nu_{1}>0$ depending only on $\epsilon,C,D,\nu,N$ and $s$ such that $$ \inf{T_{\nu}} \geq (1-\epsilon)C^{\frac{N}{2s}}~\text{for all}~0<\nu<\nu_{1}.$$
\end{lemma}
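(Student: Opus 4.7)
The plan is to reduce the constraint defining $T_\nu$ to a perturbed scalar equation and to track how its smallest positive root behaves as $\nu \to 0^+$, since at $\nu = 0$ the inequality $C\varrho^{(N-2s)/N} \leq \varrho$ has the sharp threshold $\varrho_0 = C^{N/(2s)}$ which we want to recover as the limit of $\inf T_\nu$.

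First, I would introduce the auxiliary function $\psi_\nu(\varrho) := C\varrho^{(N-2s)/N} - \varrho - D\nu\,\varrho^{\delta(N-2s)/(2N)}$ so that $T_\nu = \{\varrho > 0 : \psi_\nu(\varrho) \leq 0\}$. Comparing the three exponents --- $(N-2s)/N < 1$ and $\delta(N-2s)/(2N) \geq (N-2s)/N$, with equality iff $\delta = 2$ --- shows that $\psi_\nu(\varrho) > 0$ for $\varrho > 0$ small (after imposing $\nu < C/D$ in the boundary case $\delta = 2$), while $\psi_\nu(\varrho) \to -\infty$ as $\varrho \to \infty$. Continuity then gives $\inf T_\nu = \varrho_\nu$, where $\varrho_\nu$ is the smallest positive root of $\psi_\nu = 0$.

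Next I would perform the substitution $\tau = \varrho^{2s/N}$. Dividing $\psi_\nu(\varrho_\nu) = 0$ by $\varrho_\nu^{(N-2s)/N}$ reduces it to the scalar equation
$$C = \tau_\nu + D\nu\, \tau_\nu^{\gamma}, \qquad \gamma := \frac{(N-2s)(\delta-2)}{4s} \geq 0.$$
Strict monotonicity of $\tau \mapsto \tau + D\nu \tau^\gamma$ on $(0,\infty)$ renders $\tau_\nu$ unique, and the trivial bound $\tau_\nu \leq C$ combined with monotonicity of $x \mapsto x^\gamma$ yields $\tau_\nu^\gamma \leq C^\gamma$, so that
$$\tau_\nu \geq C - D\nu\, C^{\gamma}.$$
Given $\epsilon > 0$, setting $\epsilon' := 1 - (1-\epsilon)^{2s/N} \in (0,1)$ and
$$\nu_1 := \min\!\left\{\tfrac{C}{2D},\ \tfrac{\epsilon'\, C^{1-\gamma}}{D}\right\}$$
forces $\tau_\nu \geq (1-\epsilon')C$ for $0 < \nu < \nu_1$, which translates to $\varrho_\nu = \tau_\nu^{N/(2s)} \geq (1-\epsilon)C^{N/(2s)}$, as required.

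The only step I expect to be delicate is the borderline case $\delta = 2$ (equivalently $\gamma = 0$): there the coupling and critical exponents collapse, the scalar equation degenerates to $\tau_\nu = C - D\nu$, and the smallness condition $\nu < C/D$ is indispensable to keep $\varrho_\nu$ bounded away from the origin. For $\delta > 2$ the coupling term is strictly subleading near $\varrho = 0$, so the sign of $\psi_\nu$ near the origin is positive automatically; the unified choice of $\nu_1$ above is engineered to handle both regimes at once.
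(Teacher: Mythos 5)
Your proof is correct. At heart it is the same mechanism as the paper's: divide the defining inequality of $T_\nu$ by a suitable power of $\varrho$ so that the threshold $C^{N/(2s)}$ appears as the root of a monotone scalar relation, and track that root as $\nu\to 0^+$. The difference is in the normalization. The paper divides by $\varrho^{\delta/2_s^*}$, studies the decreasing function $F(\varrho)=C\varrho^{1-\frac{2s}{N}-\frac{\delta}{2_s^*}}-\varrho^{1-\frac{\delta}{2_s^*}}$, and concludes qualitatively that $\inf T_\nu=F^{-1}(D\nu)\to C^{N/(2s)}$; you instead divide by $\varrho^{(N-2s)/N}$, substitute $\tau=\varrho^{2s/N}$, and solve the increasing equation $C=\tau+D\nu\,\tau^\gamma$. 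Your route buys two things. First, the crude bound $\tau_\nu\le C$ immediately gives the explicit estimate $\tau_\nu\ge C-D\nu\,C^\gamma$ and hence an explicit admissible $\nu_1$, whereas the paper's proof only asserts convergence of $F^{-1}(D\nu)$ without a rate. Second, you correctly isolate the borderline case $\delta=2$: there the paper's claim $\lim_{\varrho\to 0^+}F(\varrho)=+\infty$ is actually false (the limit is $C$, since the first exponent of $F$ vanishes), and the argument silently needs $D\nu<C$ for $F^{-1}(D\nu)$ to make sense near the threshold --- exactly the smallness condition you impose via $\nu_1\le C/(2D)$. The only cosmetic caveat is that your choice $\epsilon'=1-(1-\epsilon)^{2s/N}$ presupposes $\epsilon\in(0,1)$; for $\epsilon\ge 1$ the conclusion is vacuous since $\inf T_\nu>0$, so nothing is lost.
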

\begin{proof}
For $\varrho \in T_{\nu}$,
\begin{align*}
    &C\varrho^{\frac{N-2s}{N}} \leq \varrho + D\nu \varrho^{\frac{\delta}{2} \big(\frac{N-2s}{N}\big)} = \varrho + D\nu \varrho^{\frac{\delta}{2_{s}^{*}}}\\
     &C\varrho^{1-\frac{2s}{N} -\frac{\delta}{2_{s}^{*}} } - \varrho^{1-\frac{\delta}{2_{s}^{*}}} \leq D\nu \\
      &F(\varrho) \leq D\nu, ~\text{where}~ F(\varrho) = C\varrho^{1-\frac{2s}{N} -\frac{\delta}{2_{s}^{*}} } - \varrho^{1-\frac{\delta}{2_{s}^{*}}}. 
\end{align*}
Hence, we notice that $T_{\nu}  = \{ \varrho \in \mathbb{R}^{+}~|~ F(\varrho) \leq D\nu \}$. Also observe that $F(C^{\frac{N}{2s}}) = 0$ and $$ F'(\varrho) = \varrho^{-\frac{\delta}{2_{s}^{*}}} \bigg[ \bigg( \frac{2-\delta}{2_{s}^{*}}\bigg)C\varrho^{-\frac{2s}{N}} - \bigg( \frac{2_{s}^{*}-\delta}{2_{s}^{*}} \bigg) \bigg].$$
If $\delta \leq 2_{s}^{*}$, then $F'(\varrho) <0$ i.e. the function $F$ is strictly decreasing function. If $\delta > 2_{s}^{*}$ and $ F'(\varrho) = 0$ then $$ \varrho = \bigg(  \frac{C(\delta-2)}{\delta-2_{s}^{*}}\bigg)^{\frac{N}{2s}}> C^{\frac{N}{2s}},$$
which implies that $F$ has a global negative minimum at $\varrho = \big(  \frac{C(\delta-2)}{\delta-2_{s}^{*}}\big)^{\frac{N}{2s}}> C^{\frac{N}{2s}}$  and $F$ tends to $0$ as $\varrho \rightarrow +\infty.$ In any case, $F$ is strictly decreasing in $(0,C^{\frac{N}{2s}}]$ and it has only one zero at $C^{\frac{N}{2s}}$ with $\lim_{\varrho \rightarrow 0^{+}}F(\varrho) = +\infty$ and $F(\varrho) < 0 $ in $(C^{\frac{N}{2s}}, +\infty)$. Hence, $\inf{T_{\nu}} = F^{-1}(D\nu) \rightarrow C^{\frac{N}{2s}}~as~\nu \rightarrow 0^{+}$ and the conclusion follows.
\end{proof}
\subsection{The case \texorpdfstring{$\max\{2s_{1},2s_{2}\}< N < \min\{6s_{1},6s_{2}\}$}{TEXT}}
In the following, we prove the Palais-Smale compactness condition of the functional $J_\nu$ at level $c$.
\begin{lemma} \label{PS compactness lemma}
Suppose $\max\{2s_{1},2s_{2}\}< N < \min\{6s_{1},6s_{2}\}$ and (\ref{condition on h}). Then, the functional $J_{\nu}$ satisfies the (PS) condition for any level $c$ satisfying
\begin{align} \label{energy level PS}
    c <  \min \bigg\{\frac{s_{1}}{N}S^{\frac{N}{2s_{1}}}(\lambda_{1}), \frac{s_{2}}{N} S^{\frac{N}{2s_{2}}}(\lambda_{2})\bigg\}.
\end{align}
\end{lemma}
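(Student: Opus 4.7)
The plan is to run a concentration-compactness argument adapted to the two-component fractional setting. Let $\{(u_n,v_n)\}\subset\mathbb{D}$ be a (PS) sequence for $J_\nu$ at level $c$. By Lemma \ref{boundedness lemma} the sequence is bounded, so after extracting a subsequence, $(u_n,v_n)\rightharpoonup(u_0,v_0)$ weakly in $\mathbb{D}$, a.e.\ in $\mathbb{R}^N$, and strongly in $L^q_{\mathrm{loc}}(\mathbb{R}^N)$ for every $q<2^*_{s_i}$. I first observe that the coupling term is a compact perturbation: the condition $N<\min\{6s_1,6s_2\}$ is equivalent to $3<\min\{2^*_{s_1},2^*_{s_2}\}$, which places the cubic interaction $u^2v$ strictly below critical growth. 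Combined with $h\in L^1(\mathbb{R}^N)\cap L^\infty(\mathbb{R}^N)$, a standard truncation on a large ball together with H\"older/interpolation between local strong convergence and the uniform bounds in $L^{2^*_{s_i}}$ yields
\[
\int_{\mathbb{R}^N} h(x)\,u_n^2 v_n\,\mathrm{d}x \;\longrightarrow\; \int_{\mathbb{R}^N} h(x)\,u_0^2 v_0\,\mathrm{d}x,
\]
and the analogous convergence for the linear derivative terms tested against any $(\varphi,\psi)\in\mathbb{D}$. Passing to the limit in $J'_\nu(u_n,v_n)\to 0$ then shows that $(u_0,v_0)$ is a critical point of $J_\nu$, and the same Nehari-type identity as in the proof of Lemma \ref{equivalent of critical points lemma} gives $J_\nu(u_0,v_0)\geq 0$.

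Set $w_n=u_n-u_0$ and $z_n=v_n-v_0$. By the Brezis-Lieb lemma applied componentwise, and by the standard splitting of the quadratic part under weak convergence (the Hardy terms split because $\lambda_i<\Lambda_{N,s_i}$ makes $\|\cdot\|_{\mathbb{D}}^2$ equivalent to the usual fractional Gagliardo form), I obtain
\[
\tfrac{1}{2}\|(w_n,z_n)\|_{\mathbb{D}}^2 - \tfrac{1}{2^*_{s_1}}\|w_n\|_{2^*_{s_1}}^{2^*_{s_1}} - \tfrac{1}{2^*_{s_2}}\|z_n\|_{2^*_{s_2}}^{2^*_{s_2}} = c-J_\nu(u_0,v_0)+o(1),
\]
\[
\|(w_n,z_n)\|_{\mathbb{D}}^2 - \|w_n\|_{2^*_{s_1}}^{2^*_{s_1}} - \|z_n\|_{2^*_{s_2}}^{2^*_{s_2}} = o(1).
\]
Extracting further subsequences, set $A=\lim\|w_n\|_{2^*_{s_1}}^{2^*_{s_1}}$ and $B=\lim\|z_n\|_{2^*_{s_2}}^{2^*_{s_2}}$; combining the two identities and using $\tfrac12-\tfrac{1}{2^*_{s_i}}=\tfrac{s_i}{N}$ yields
\[
c-J_\nu(u_0,v_0) = \tfrac{s_1}{N}A+\tfrac{s_2}{N}B.
\]

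Finally, I apply the fractional concentration-compactness principle of Bonder et al., Chen et al., and Pucci-Temperini to the sequences $\{w_n\}$ and $\{z_n\}$, which converge weakly to zero. Each CCP produces an at most countable family of concentration points (possibly including the origin and infinity); at every nontrivial atom the sharp reverse Hardy-Sobolev inequality furnishes
\[
\mu_i(\{x_j\}) \;\geq\; S(\lambda_i)\,\rho_i(\{x_j\})^{\frac{2}{2^*_{s_i}}},
\]
where $\mu_i$ and $\rho_i$ are the limiting measures of the Dirichlet-Hardy energy and of $|w_n|^{2^*_{s_1}}$, $|z_n|^{2^*_{s_2}}$. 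The second identity above forces these two measures to carry the same total mass for each component; by subadditivity of $t\mapsto t^{2/2^*_{s_i}}$ one deduces that if $A>0$ then $A^{2s_1/N}\geq S(\lambda_1)$, i.e.\ $A\geq S^{\frac{N}{2s_1}}(\lambda_1)$, and similarly for $B$. Combined with $J_\nu(u_0,v_0)\geq 0$, this gives
\[
c \;\geq\; \tfrac{s_1}{N}A+\tfrac{s_2}{N}B \;\geq\; \min\left\{\tfrac{s_1}{N}S^{\frac{N}{2s_1}}(\lambda_1),\;\tfrac{s_2}{N}S^{\frac{N}{2s_2}}(\lambda_2)\right\},
\]
contradicting \eqref{energy level PS}. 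Hence $A=B=0$ and $(u_n,v_n)\to(u_0,v_0)$ strongly in $\mathbb{D}$. The main obstacle is executing the CCP simultaneously for two different fractional Laplacians with singular Hardy potentials: atoms must be tracked both at the origin (where each Hardy potential is singular) and at infinity, and the reverse Hardy-Sobolev inequality must be verified at every such atom, with the correct constant $S(\lambda_i)$. The strict subcritical condition $N<\min\{6s_1,6s_2\}$ is precisely what decouples the two CCP analyses by turning the coupling term into a compact perturbation.
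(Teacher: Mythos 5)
Your argument is correct in substance, but it follows a genuinely different route from the paper's. The paper never passes to the remainders $w_n=u_n-\tilde u$, $z_n=v_n-\tilde v$: it applies the concentration--compactness principle directly to $(u_n,v_n)$, tests $J_\nu'(u_n,v_n)$ with localized functions $(u_n\Psi_{j,\epsilon},0)$ at each atom (and at $0$ and $\infty$) to obtain $\mu_j\le\rho_j$, combines this with the reverse Sobolev/Hardy--Sobolev inequalities to get the quantized dichotomy ``$\rho_j=0$ or $\rho_j\ge S_1^{N/2s_1}$'' (resp.\ with $S(\lambda_1)$ at $0,\infty$), and then feeds the surviving atoms into the energy identity $c=\frac16\|(u_n,v_n)\|^2_{\mathbb D}+\frac{6s_1-N}{6N}\|u_n\|^{2^*_{s_1}}_{2^*_{s_1}}+\frac{6s_2-N}{6N}\|v_n\|^{2^*_{s_2}}_{2^*_{s_2}}+o(1)$ to contradict \eqref{energy level PS}. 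Your Brezis--Lieb globalization buys a cleaner bookkeeping (everything is reduced to the two scalars $A,B$ and one superadditivity estimate, and you get the extra information that the weak limit is a critical point), at the price of having to justify the splitting of the Hardy quadratic form and the identity $\|u_n\|_{2^*_{s_1}}^{2^*_{s_1}-1}$-type Brezis--Lieb cancellations; the paper's local testing avoids Brezis--Lieb entirely but must handle the cross Gagliardo term $\iint(u_n(x)-u_n(y))(\Psi(x)-\Psi(y))u_n(y)|x-y|^{-N-2s_1}$, which is the technical heart of its proof. One step of yours should be tightened: the displayed identity $\|(w_n,z_n)\|_{\mathbb D}^2-\|w_n\|^{2^*_{s_1}}_{2^*_{s_1}}-\|z_n\|^{2^*_{s_2}}_{2^*_{s_2}}=o(1)$ is a single scalar relation for the \emph{sum}, so it does not by itself force $\lim\|w_n\|^2_{\lambda_1,s_1}=A$ and $\lim\|z_n\|^2_{\lambda_2,s_2}=B$ componentwise, which is what your deduction $A\ge S(\lambda_1)A^{2/2^*_{s_1}}$ uses. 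This is easily repaired: either test $J'_\nu(u_n,v_n)$ against $(w_n,0)$ and $(0,z_n)$ separately (the coupling term again vanishes by compactness), or observe that \eqref{energy level PS} together with $c-J_\nu(u_0,v_0)=\frac{s_1}{N}A+\frac{s_2}{N}B$ and $J_\nu(u_0,v_0)\ge0$ caps $A<S^{N/2s_1}(\lambda_1)$ and $B<S^{N/2s_2}(\lambda_2)$ individually, so that $A+B>0$ would give the strict inequality $A+B<S(\lambda_1)A^{2/2^*_{s_1}}+S(\lambda_2)B^{2/2^*_{s_2}}\le\lim\bigl(\|w_n\|^2_{\lambda_1,s_1}+\|z_n\|^2_{\lambda_2,s_2}\bigr)=A+B$, a contradiction using only the summed identity.
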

\begin{proof}
We know by Lemma \ref{boundedness lemma} that any (PS) sequence $\{(u_{n},v_{n})\}$ is bounded in $\mathbb{D}$. So, there exists a subsequence denoted by $\{(u_{n},v_{n})\}$ itself and a $(\Tilde{u},\Tilde{v})\in \mathbb{D}$ satisfying the following
\begin{align*}
    (u_{n},v_{n}) &\rightharpoonup (\Tilde{u},\Tilde{v}) ~\text{weakly in}~\mathbb{D}, \\
    (u_{n},v_{n}) &\rightarrow (\Tilde{u},\Tilde{v})~\text{strongly in}~ L_{loc}^{q_{1}}(\mathbb{R}^{N}) \times L_{loc}^{q_{2}}(\mathbb{R}^{N})  ~\text{for}~1\leq q_{1} < 2_{s_{1}}^{*},1\leq q_{2} < 2_{s_{2}}^{*},\\
    (u_{n},v_{n}) &\rightarrow (\Tilde{u},\Tilde{v})  ~\text{a.e.~in}~~\mathbb{R}^{N}.  
\end{align*}
Now by using the concentration–compactness principle of Bonder \cite[Theorem 1.1]{Bonder2018}, Chen \cite[Lemma 4.5]{Chen2018} and an analogous version of Pucci \cite[Theorem 1.2]{Pucci2021}, there exist a subsequence, still denoted as $\{(u_{n},v_{n})\}$, two at most countable sets of points $\{x_{j}\}_{j \in \mathcal{J}} \subset \mathbb{R}^{N}$ and $\{y_{k}\}_{k \in \mathcal{K}} \subset \mathbb{R}^{N}$, and non-negative numbers $$\{(\mu_{j},\rho_{j})\}_{j \in \mathcal{J}},~ \{(\Bar{\mu}_{k},~\Bar{\rho}_{k})\}_{k \in \mathcal{K}},\mu_{0},~\rho_{0},~\gamma_{0},~\Bar{\mu}_{0},~\Bar{\rho}_{0} \text{ and }\Bar{\gamma}_{0}$$ such that the following convergences hold $weakly^*$ in the sense of measures,
\begin{align} \label{Concentration compactness}
\begin{split}
    |D^{s_{1}}u_{n}|^{2}~& \rightharpoonup \mathrm{d}\mu \geq  |D^{s_{1}}\Tilde{u}|^{2} + \Sigma_{j \in \mathcal{J}} \mu_{j}\delta_{x_{j}} + \mu_{0}\delta_{0},\\
    |D^{s_{2}}v_{n}|^{2}~ &\rightharpoonup \mathrm{d}\Bar{\mu} \geq  |D^{s_{2}}\Tilde{v}|^{2} + \Sigma_{k \in \mathcal{K}} \Bar{\mu}_{k}\delta_{y_{k}} + \Bar{\mu}_{0}\delta_{0},\\
    |u_{n}|^{2_{s_{1}}^{*}}~ &\rightharpoonup \mathrm{d}\rho =|\Tilde{u}|^{2_{s_{1}}^{*}} + \Sigma_{j \in \mathcal{J}} \rho_{j}\delta_{x_{j}} + \rho_{0}\delta_{0}, \\
     |v_{n}|^{2_{s_{2}}^{*}}~ &\rightharpoonup \mathrm{d}\Bar{\rho} =|\Tilde{v}|^{2_{s_{2}}^{*}} + \Sigma_{k \in \mathcal{K}} \Bar{\rho}_{k}\delta_{y_{k}} + \Bar{\rho}_{0}\delta_{0}, \hspace{0.6cm}\\
     \frac{u_{n}^{2}}{|x|^{2s_{1}}} ~ &\rightharpoonup \mathrm{d}\gamma = \frac{\Tilde{u}^{2}}{|x|^{2s_{1}}} + \gamma_{0}\delta_{0}, \\
     \frac{v_{n}^{2}}{|x|^{2s_{2}}} ~ &\rightharpoonup \mathrm{d}\Bar{\gamma} = \frac{\Tilde{v}^{2}}{|x|^{2s_{2}}} + \Bar{\gamma_{0}}\delta_{0},
\end{split}
\end{align}
where $\delta_{0},\delta_{x_{j}},\delta_{y_{k}}$ are the Dirac functions at the points $0,x_{j}~\text{and}~y_{k}$ of $\mathbb{R}^{N}$ respectively. Now from inequality (1.6) of \cite[Theorem 1.1]{Bonder2018} and inequalities (1.7) of \cite[Theorem 1.2]{Pucci2021}, we deduce the inequalities given below
\begin{align} \label{inequality with S}
    \begin{split}
        S_1 \rho_{j}^{\frac{2}{2_{s_{1}}^{*}}} &\leq \mu_{j} ~\text{for all} ~j \in \mathcal{J} \cup \{ 0\},\\
         S_2 \Bar{\rho}_{k}^{\frac{2}{2_{s_{2}}^{*}}} &\leq \Bar{\mu}_{k} ~\text{for all}~ k \in \mathcal{K} \cup \{ 0\},
    \end{split}
\end{align}
Also, by taking $\alpha = 2s_{1}$ and $\alpha = 2s_{2}$ in the inequality (4.21) of \cite[Lemma 4.5]{Chen2018} we deduce that
\begin{align} \label{inequality with lambda}
    \begin{split} 
       \Lambda_{N,s_{1}} \gamma_{0} &\leq \mu_{0},  \\
        \Lambda_{N,s_{2}} \Bar{\gamma}_{0} &\leq \Bar{\mu}_{0}.  
    \end{split} 
\end{align}
We denote the concentration of the sequence $\{ u_{n} \}$ at infinity by the following numbers
\begin{align} \label{Concentration at infty}
    \begin{split}
        \rho_{\infty} &= \lim_{R \rightarrow \infty} \limsup_{n \rightarrow \infty} \int_{|x|>R} |u_{n}|^{2_{s_{1}}^{*}} \mathrm{d}x, \\
        \mu_{\infty} &= \lim_{R \rightarrow \infty} \limsup_{n \rightarrow \infty} \int_{|x|>R} |D^{s_{1}}u_{n}|^{2} \mathrm{d}x,\\
        \gamma_{\infty} &= \lim_{R \rightarrow \infty} \limsup_{n \rightarrow \infty} \int_{|x|>R} \frac{u_{n}^{2}}{|x|^{2s_{1}}} \mathrm{d}x. \\
    \end{split}
\end{align}
In a similar way, we can define the concentrations of the sequence $\{ v_{n} \}$ at infinity by the numbers ${\Bar{\mu}_{\infty}}, {\Bar{\rho}_{\infty}}$ and ${\Bar{\gamma}_{\infty}}$. Further, we assume that the function $\Psi_{j,\epsilon}(x)$ is a smooth cut-off function centered at points $\{x_{j}\}$, $j \in \mathcal{J}$, satisfying
\begin{align} \label{test function phi at j}
    \Psi_{j,\epsilon} = 1 ~\text{in}~~B_{\frac{\epsilon}{2}}(x_{j}),~~ \Psi_{j,\epsilon} = 0 ~~\text{in}~~B^{c}_{\epsilon}(x_{j}), ~0\leq \Psi_{j,\epsilon} \leq 1~\text{and}~~|\nabla  \Psi_{j,\epsilon}| \leq \frac{4}{\epsilon},
\end{align}
where $B_{r}(x_{j}) = \{y \in \mathbb{R}^N: |y-x_j|<r\}$. Now, testing $J'_{\nu}(u_{n},v_{n})$ with $(u_{n}\Psi_{j,\epsilon}, 0)$ we get
\begin{align} \label{functional with first component}
    \begin{split}
       0 &= \lim_{n \rightarrow +\infty} \big< J'_{\nu}(u_{n},v_{n})| (u_{n}\Psi_{j,\epsilon}, 0)\big> \\
       &= \lim_{n \rightarrow +\infty} \bigg( \iint_{\mathbb{R}^{2N}} \frac{|u_{n}(x)-u_{n}(y)|^{2}}{|x-y|^{N+2s_{1}}} \Psi_{j,\epsilon}(x) \, \mathrm{d}x \mathrm{d}y  \\ &\quad +\iint_{\mathbb{R}^{2N}} \frac{(u_{n}(x)-u_{n}(y))(\Psi_{j,\epsilon}(x)-\Psi_{j,\epsilon}(y))}{|x-y|^{N+2s_{1}}} u_{n}(y)\, \mathrm{d}x \mathrm{d}y -\lambda_{1} \int_{\mathbb{R}^{N}}\frac{u_{n}^{2}}{|x|^{2s_{1}}}\Psi_{j,\epsilon}(x)\, \mathrm{d}x \\&\quad- \int_{\mathbb{R}^{N}} |u_{n}|^{2_{s_{1}}^{*}}\Psi_{j,\epsilon}(x)\,\mathrm{d}x - \nu \alpha \int_{\mathbb{R}^{N}} h(x)u_{n}^2 v_{n}\Psi_{j,\epsilon}(x)\, \mathrm{d}x \bigg) \\
      & = \int_{\mathbb{R}^{N}} \Psi_{j,\epsilon}\, \mathrm{d}\mu - \lambda_{1} \int_{\mathbb{R}^{N}}\Psi_{j,\epsilon}\, \mathrm{d}\gamma - \int_{\mathbb{R}^{N}}\Psi_{j,\epsilon}\, \mathrm{d}\rho \ +\\ &\quad+\lim_{n \rightarrow  + \infty} \iint_{\mathbb{R}^{2N}} \frac{(u_{n}(x)-u_{n}(y))(\Psi_{j,\epsilon}(x)-\Psi_{j,\epsilon}(y))}{|x-y|^{N+2s_{1}}} u_{n}(y)\, \mathrm{d}x \mathrm{d}y \\
      &\quad - \nu \alpha \lim_{n \rightarrow 
       + \infty}  \int_{\mathbb{R}^{N}} h(x)u_{n}^2 v_{n}\Psi_{j,\epsilon}(x)\, \mathrm{d}x. \hspace{6.5cm}
    \end{split}
\end{align}
Notice that $0 \notin \text{supp}(\Psi_{j,\epsilon})$ for $\epsilon$ being sufficiently small and also it is given that $h \in L^{1}(\mathbb{R}^{N}) \cap L^{\infty}(\mathbb{R}^{N})$.
Now we evaluate each of the integrals mentioned above taking $\epsilon \rightarrow 0$. 

\begin{align*}
     \int_{\mathbb{R}^{N}} \Psi_{j,\epsilon}\, \mathrm{d}\mu &\geq \int_{\mathbb{R}^{N}} \Psi_{j,\epsilon}|D^{s_{1}}\Tilde{u}|^{2} \,\mathrm{d}x + \Sigma_{j \in \mathcal{J}} \mu_{j}\delta_{x_{j}}(\Psi_{j,\epsilon}) + \mu_{0}\delta_{0}(\Psi_{j,\epsilon})\\
     &= \int_{\mathbb{R}^{N}} \Psi_{j,\epsilon}|D^{s_{1}}\Tilde{u}|^{2} \,\mathrm{d}x + \Sigma_{j \in \mathcal{J}} \mu_{j}\Psi_{j,\epsilon}(x_j) + \mu_{0}\Psi_{j,\epsilon}(0).
\end{align*}
Taking the limit $\epsilon \rightarrow 0$ and since $0 \notin \text{supp}(\Psi_{j,\epsilon})$ for $\epsilon$ being sufficiently small, we get 
\begin{equation} \label{first integral}
    \lim\limits_{\epsilon \rightarrow 0} \int_{\mathbb{R}^{N}} \Psi_{j,\epsilon}\, \mathrm{d}\mu \geq \mu_{j}.
\end{equation}
and
\begin{align*}
     \int_{\mathbb{R}^{N}} \Psi_{j,\epsilon}\, \mathrm{d}\rho &= \int_{\mathbb{R}^{N}} |\Tilde{u}|^{2_{s_1}^*} \Psi_{j,\epsilon} \,\mathrm{d}x + \Sigma_{j \in \mathcal{J}} \rho_{j}\delta_{x_{j}}(\Psi_{j,\epsilon}) + \rho_{0}\delta_{0}(\Psi_{j,\epsilon})\\
     &= \int_{\mathbb{R}^{N}} |\Tilde{u}|^{2_{s_1}^*}\Psi_{j,\epsilon} \,\mathrm{d}x + \Sigma_{j \in \mathcal{J}} \rho_{j}\Psi_{j,\epsilon}(x_j) + \rho_{0}\Psi_{j,\epsilon}(0).
\end{align*}
Taking the limit $\epsilon \rightarrow 0$ and since $0 \notin \text{supp}(\Psi_{j,\epsilon})$ for $\epsilon$ being sufficiently small, we get 
\begin{equation} \label{third integral}
    \lim\limits_{\epsilon \rightarrow 0} \int_{\mathbb{R}^{N}} \Psi_{j,\epsilon}\, \mathrm{d}\rho = \rho_{j}.
\end{equation}
Further,
\begin{equation} \label{second integral}
    \lim\limits_{\epsilon \rightarrow 0}  \int_{\mathbb{R}^{N}} \Psi_{j,\epsilon}\, \mathrm{d}\gamma = \lim\limits_{\epsilon \rightarrow 0} \bigg( \int_{\mathbb{R}^{N}} \Psi_{j,\epsilon} \frac{\Tilde{u}^{2}}{|x|^{2s_{1}}}\, \mathrm{d}x + \gamma_{0}\Psi_{j,\epsilon}(0) \bigg) = 0.
\end{equation}
Next, we claim that:
\begin{equation} \label{fourth integral}
   \lim\limits_{\epsilon \rightarrow 0} \lim_{n \rightarrow  + \infty} \iint_{\mathbb{R}^{2N}} \frac{(u_{n}(x)-u_{n}(y))(\Psi_{j,\epsilon}(x)-\Psi_{j,\epsilon}(y))}{|x-y|^{N+2s_{1}}} u_{n}(y) \,\mathrm{d}x \mathrm{d}y = 0.
\end{equation}
Let
\begin{align*}
    \mathcal{I} &= \iint_{\mathbb{R}^{2N}} \frac{(u_{n}(x)-u_{n}(y))(\Psi_{j,\epsilon}(x)-\Psi_{j,\epsilon}(y))}{|x-y|^{N+2s_{1}}} u_{n}(y) \,\mathrm{d}x \mathrm{d}y \\ &=  \iint_{\mathbb{R}^{2N}} \frac{(u_{n}(x)-u_{n}(y))[(\Psi_{j,\epsilon}(x)-\Psi_{j,\epsilon}(y)) u_{n}(y)]}{|x-y|^{N+2s_{1}}} \,\mathrm{d}x \mathrm{d}y.
\end{align*}
Since $\{u_n \Psi_{j,\epsilon}\}_{n \in \mathbb{N}}$ is a bounded sequence in $\mathcal{D}^{s_1,2}(\mathbb{R}^N)$, by using the H\"{o}lder's inequality we obtain
\begin{align*}
    \mathcal{I} &\leq \bigg( \iint_{\mathbb{R}^{2N}} \frac{|u_{n}(x)-u_{n}(y)|^2}{|x-y|^{N+2s_{1}}} \,\mathrm{d}x \mathrm{d}y \bigg)^{\frac{1}{2}} \bigg( \iint_{\mathbb{R}^{2N}} \frac{|\Psi_{j,\epsilon}(x)-\Psi_{j,\epsilon}(y)|^2|u_{n}(y)|^2}{|x-y|^{N+2s_{1}}} \,\mathrm{d}x \mathrm{d}y \bigg)^{\frac{1}{2}} \\
    &\leq C \bigg( \iint_{\mathbb{R}^{2N}} \frac{|\Psi_{j,\epsilon}(x)-\Psi_{j,\epsilon}(y)|^2|u_{n}(y)|^2}{|x-y|^{N+2s_{1}}} \,\mathrm{d}x \mathrm{d}y \bigg)^{\frac{1}{2}}.
\end{align*}
By Lemma 2.2, Lemma 2.4 of Bonder et al. \cite{Bonder2018}, and Lemma 2.3 of Xiang et al. \cite{Xiang2016}, we obtain that
\[ \lim\limits_{\epsilon \rightarrow 0} \lim_{n \rightarrow  + \infty} \iint_{\mathbb{R}^{2N}} \frac{|\Psi_{j,\epsilon}(x)-\Psi_{j,\epsilon}(y)|^2|u_{n}(y)|^2}{|x-y|^{N+2s_{1}}} \,\mathrm{d}x \mathrm{d}y =0 ,\]
which implies that $ \lim\limits_{\epsilon \rightarrow 0} \lim\limits_{n \rightarrow  + \infty} \mathcal{I} = 0$. Hence, the claim \eqref{fourth integral} is done. Now we show that
\begin{equation} \label{Fifth integral}
    \lim\limits_{\epsilon \rightarrow 0} \lim_{n \rightarrow 
       + \infty}  \int_{\mathbb{R}^{N}} h(x)u_{n}^2 v_{n}\Psi_{j,\epsilon}(x)\, \mathrm{d}x=0.
\end{equation}
We notice that $3 < \min\{2_{s_1}^*, 2_{s_2}^* \}$ implies that $\frac{2}{2_{s_1}^*}+ \frac{1}{2_{s_2}^*}<1$. Therefore, by applying the H\"{o}lder's inequality, we have
\begin{align*}
\int_{\mathbb{R}^{N}}h(x)u_{n}^2 v_{n}\Psi_{j,\epsilon}(x)\,\mathrm{d}x &= \int_{\mathbb{R}^{N}}(h(x)\Psi_{j,\epsilon}(x))^{1-\frac{2}{2_{s_{1}}^{*}} - \frac{1}{2_{s_{2}}^{*}}} (h(x) \Psi_{j,\epsilon}(x))^{\frac{2}{2_{s_{1}}^{*}} + \frac{1}{2_{s_{2}}^{*}} }u_{n}^2 v_{n}\,\mathrm{d}x\\
      &\leq \bigg( \int_{\mathbb{R}^{N}}h(x)\Psi_{j,\epsilon}(x)\,\mathrm{d}x\bigg)^{1-\frac{2}{2_{s_{1}}^{*}} - \frac{1}{2_{s_{2}}^{*}}}  \bigg(\int_{\mathbb{R}^{N}}h(x)|u_{n}|^{2_{s_{1}}^{*}}\Psi_{j,\epsilon}(x)\,\mathrm{d}x\bigg)^{\frac{2}{2_{s_{1}}^{*}}}\\ 
      &\qquad \bigg(\int_{\mathbb{R}^{N}}h(x)|v_{n}|^{2_{s_{2}}^{*}}\Psi_{j,\epsilon}(x)\,\mathrm{d}x\bigg)^{\frac{1}{2_{s_{2}}^{*}}}.
  \end{align*}
The first integral on the RHS of the last inequality tends to $0$ as $\epsilon \rightarrow 0$, and the rest two integrals are bounded as the limit $n \rightarrow \infty$. Therefore, first taking the limit as $n \rightarrow \infty$ and then taking $\epsilon \rightarrow 0$, we get \eqref{Fifth integral}. 
Now from (\ref{functional with first component}--\ref{Fifth integral}), one leads to the conclusion that $\mu_{j} - \rho_{j} \leq 0$ as $\epsilon \rightarrow 0$. Then using (\ref{inequality with S}), we have
\begin{align} \label{rho concentartion}
    \text{either} ~ \rho_{j} = 0,~~\text{or} ~~ \rho_{j} \geq S_1^{\frac{N}{2s_{1}}},~~\text{for all} ~j\in \mathcal{J}~\text{and}~\mathcal{J}~\text{is~finite}.
\end{align}
Analogously, we can also conclude that 
\begin{align} \label{rho bar concentartion}
    \text{either} ~ \Bar{\rho}_{k} = 0,~~\text{or} ~~ \Bar{\rho}_{k} \geq S_2^{\frac{N}{2s_{2}}},~~\text{for all}~ k\in \mathcal{K}~\text{and}~\mathcal{K}~\text{is~finite}.
\end{align}
Now for studying the concentration at origin, we consider a cut-off function $\Psi_{0,\epsilon}$ which satisfies the assumption
(\ref{test function phi at j}). Again, testing $J'_{\nu}(u_{n},v_{n})$ with $(u_{n}\Psi_{0,\epsilon}, 0)$ and following the analogous approach, we can easily deduce that $\mu_{0} - \lambda_{1}\gamma_{0}-\rho_{0} \leq 0$ and $\Bar{\mu}_{0} - \lambda_{1}\Bar{\gamma}_{0}-\Bar{\rho}_{0} \leq 0$. Moreover, using the inequality (1.7) of \cite[Theorem 1.2]{Pucci2021}, we have
\begin{align} \label{functional with first component at origin}
     \mu_{0} - \lambda_{1}\gamma_{0} \geq S(\lambda_{1}) \rho_{0}^{\frac{2}{2_{s_{1}}^{*}}} ~~\text{and}~~
     \Bar{\mu}_{0} - \lambda_{2}\Bar{\gamma}_{0} \geq S(\lambda_{2})\Bar{\rho}_{0}^{\frac{2}{2_{s_{2}}^{*}}},
\end{align}
which further implies that
\begin{align} \label{rho node inequalities}
    \begin{split}
     \text{either}~~ \rho_{0} = 0 ~~\text{or}~~\rho_{0} \geq S^{\frac{N}{2s_{1}}}(\lambda_{1}),\\
     \text{either}~~\Bar{\rho}_{0} = 0 ~~\text{or}~~\Bar{\rho}_{0} \geq S^{\frac{N}{2s_{2}}}(\lambda_{2}).
    \end{split}
\end{align}
Next for concentration at the point $\infty$, we choose $\mathcal{R}>0$ large enough so that $\{x_{j}\}_{j \in \mathcal{J}} \cup \{0\}$ is contained in $ B_{\mathcal{R}}(0)$ and we  consider a cut-off function $\Psi_{\infty,\epsilon}$ supported in a neighbourhood of $\infty$ satisfying the following
\begin{align}\label{phi at infinity}
 \Psi_{\infty,\epsilon} = 0 ~~\text{in}~~B_{\mathcal{R}}(0),~~ \Psi_{\infty,\epsilon} = 1 ~~\text{in}~~B^{c}_{\mathcal{R}+1}(0),~0\leq \Psi_{\infty,\epsilon} \leq 1 ~\text{and}~|\nabla  \Psi_{\infty,\epsilon}| \leq \frac{4}{\epsilon}.
\end{align}
 Analogously, it is easy to find that $\mu_{\infty} - \lambda_{1}\gamma_{\infty}-\rho_{\infty} \leq 0$ and $\Bar{\mu}_{\infty} - \lambda_{1}\Bar{\gamma}_{\infty}-\Bar{\rho}_{\infty} \leq 0$ by testing $J'_{\nu}(u_{n},v_{n})$ with $(u_{n}\Psi_{\infty,\epsilon}, 0)$. Next, by the inequality (1.14) of \cite[Theorem 1.3]{Pucci2021} we have
\begin{align} \label{functional with first component at infty}
     \mu_{\infty} - \lambda_{1}\gamma_{\infty} \geq S(\lambda_{1}) \rho_{\infty}^{\frac{2}{2_{s_{1}}^{*}}} ~~\text{and}~~
     \Bar{\mu}_{\infty} - \lambda_{2}\Bar{\gamma}_{\infty} \geq S(\lambda_{2})\Bar{\rho}_{\infty}^{\frac{2}{2_{s_{2}}^{*}}},
\end{align}
which also further concludes that
\begin{align} \label{rho infty inequalities}
    \begin{split}
     \text{either}~ \rho_{\infty} = 0 ~~\text{or}~~\rho_{\infty} \geq S^{\frac{N}{2s_{1}}}(\lambda_{1}),\\
    \text{either}~ \Bar{\rho}_{\infty} = 0 ~~\text{or}~~\Bar{\rho}_{\infty} \geq S^{\frac{N}{2s_{2}}}(\lambda_{2}).
    \end{split}
\end{align}
As we already know that
\begin{align} \label{value of c}
     c = \frac{1}{6} \|(u_{n},v_{n})\|_{\mathbb{D}}^{2}+ \frac{6s_1-N}{6N} \|u_{n}\|_{{2_{s_{1}}^{*}}}^{2_{s_{1}}^{*}}+\frac{6s_2-N}{6N}  \|v_{n}\|_{{2_{s_{2}}^{*}}}^{2_{s_{2}}^{*}} +o(1) \quad \text{as } n \rightarrow + \infty.
\end{align}
Now using (\ref{Concentration compactness}-\ref{inequality with lambda}), (\ref{rho node inequalities}) and (\ref{rho infty inequalities}), we deduce that
\begin{align}\label{ inequalities with c}
      c &\geq \frac{1}{6}\bigg(\|(\Tilde{u},\Tilde{v})\|^{2}_{\mathbb{D}} + \sum\limits_{j\in \mathcal{J}}\mu_{j} + (\mu_{0} - \lambda_{1}\gamma_{0}) + (\mu_{\infty} - \lambda_{1}\gamma_{\infty})\notag\\
      &\quad+ \sum\limits_{k \in \mathcal{K}} \Bar{\mu}_{k} + 
      ( \Bar{\mu}_{0} - \lambda_{2}\Bar{\gamma}_{0}) + (\Bar{\mu}_{\infty} - \lambda_{2}\Bar{\gamma}_{\infty}) \bigg) + \frac{6s_1-N}{6N} \bigg(\int_{\mathbb{R}^{N}}|\Tilde{u}|^{2_{s_{1}}^{*}}\,\mathrm{d}x + \sum\limits_{j\in \mathcal{J}}\rho_{j} +\rho_{0} +\rho_{\infty}  \bigg) \notag\\
      &\quad+ \frac{6s_2 -N}{6N} \bigg(\int_{\mathbb{R}^{N}}|\Tilde{v}|^{2_{s_{2}}^{*}}\,\mathrm{d}x  +  \sum\limits_{k \in \mathcal{K}} \Bar{\rho}_{k} + \Bar{\rho}_{0} + \Bar{\rho}_{\infty}\bigg) \\
     &\geq \frac{1}{6}\bigg(  \bigg[ S_1 \sum\limits_{j\in \mathcal{J}}\rho_{j}^{\frac{2}{2_{s_{1}}^{*}}} + S_2 \sum\limits_{k \in \mathcal{K}} \Bar{\rho}_{k}^{\frac{2}{2_{s_{2}}^{*}}}\bigg] + S(\lambda_{1}) \bigg[ \rho_{0}^{\frac{2}{2_{s_{1}}^{*}}}
    +{\rho}_{\infty}^{\frac{2}{2_{s_{1}}^{*}}}\bigg] + S(\lambda_{2}) \bigg[ \Bar{\rho}_{0}^{\frac{2}{2_{s_{2}}^{*}}} +\Bar{\rho}_{\infty}^{\frac{2}{2_{s_{2}}^{*}}}\bigg] \bigg) \notag\\
      &\quad+ \frac{6s_1-N}{6N} \bigg( \sum\limits_{j\in \mathcal{J}}\rho_{j} +\rho_{0} +\rho_{\infty}\bigg) + \frac{6s_2 -N}{6N}\bigg(\sum\limits_{k \in \mathcal{K}} \Bar{\rho}_{k} + \Bar{\rho}_{0} + \Bar{\rho}_{\infty} \bigg). \notag
\end{align}
If the concentration is considered at the point $x_j$, then $\rho_{j}>0$ and further form above and using (\ref{rho concentartion}) we find that
\begin{align*}
    c \geq \frac{1}{6} S_1^{1+ \frac{N}{2s_{1}} \frac{2}{2_{s_{1}}^{*}}} + \frac{6s_1 -N}{6N} S_1^{\frac{N}{2s_{1}}} = \frac{s_{1}}{N} S_1^{\frac{N}{2s_{1}}}.
\end{align*}
Which contradicts the assumption on energy level given by (\ref{energy level PS}). This leads to $\rho_{j} = \mu_{j} = 0$ for all $j \in \mathcal{J}$. Proceeding in a similar way, we further deduce that $\Bar{\rho}_{k} = \Bar{\mu}_{k} = 0$ for all $k \in \mathcal{K}$. If $\rho_{0} \neq 0$, from \eqref{ inequalities with c} and (\ref{rho node inequalities}), we have
\begin{align*}
    c \geq \frac{s_{1}}{N}S^{\frac{N}{2s_{1}}}(\lambda_{1}),
\end{align*}
which again contradicts the hypothesis on the energy level c. Therefore, $\rho_{0} = 0$. By the same token, we also get $\Bar{\rho}_{0} = 0$. Arguing as above and using (\ref{rho infty inequalities}) we also find $\rho_{\infty} = 0$ and $\Bar{\rho}_{\infty} = 0$. Hence, there exists a subsequence that strongly converges in $L^{2^{*}_{s_{1}}}(\mathbb{R}^{N}) \times L^{2^{*}_{s_{2}}}(\mathbb{R}^{N})$. As a consequence, we have 
\begin{align*}
    \|(u_{n} - \Tilde{u}, v_{n} - \Tilde{v})\|^{2}_{\mathbb{D}} = \langle J'_{\nu}(u_{n},v_{n})| (u_{n} - \Tilde{u}, v_{n} - \Tilde{v}) \rangle + o_n(1),
\end{align*}
which infers that the sequence $\{ (u_{n},v_{n})\}$ strongly converges in $\mathbb{D}$ and the (PS) condition holds.
\end{proof}
Now we consider the modified version of the problem \eqref{main problem} to deal with the positive solutions.
\begin{equation} \label{modified problem}
    \left\{
        \begin{array}{ll}
            (-\Delta)^{s_{1}} u - \lambda_{1} \frac{u}{|x|^{2s_{1}}} - (u^{+})^{2_{s_{1}}^{*}-1} = 2\nu  h(x) u^{+} v^{} & \quad \text{in} ~ \mathbb{R}^{N}, \\
            (-\Delta)^{s_{2}} v - \lambda_{2} \frac{v}{|x|^{2s_{2}}} - (v^{+})^{2_{s_{2}}^{*}-1} = \nu  h(x) (u^{+})^{2} & \quad \text{in} ~ \mathbb{R}^{N},
        \end{array}
    \right.
\end{equation}
where $u^{+} = \max\{u,0\}$ and $u^{-} = \min\{u,0\}$ such that $u = u^{+} + u^{-}.$ It is clear that the solutions of (\ref{main problem}) are also the solutions of the modified problem (\ref{modified problem}). The modified problem also has a structure of the variational type, and therefore, the solutions are merely the critical points of the associated functional on $\mathbb{D}$ given by
\begin{align} \label{functional associated with modified problem}
   J^{+}_{\nu}(u_{},v_{}) = \frac{1}{2} \|(u_{},v_{})\|_{\mathbb{D}}^{2} - \frac{1}{2_{s_{1}}^{*}}\|u^{+}\|_{{2_{s_{1}}^{*}}}^{2_{s_{1}}^{*}} - \frac{1}{2_{s_{2}}^{*}}\|v^{+}\|_{{2_{s_{2}}^{*}}}^{2_{s_{2}}^{*}} - \nu \int_{\mathbb{R}^{N}} h(x)(u^{+})^{2} v^{}\,\mathrm{d}x. 
\end{align}
 It is easy to verify that the functional $J^{+}_\nu$ is Fr\'{e}chet differentiable on $\mathbb{D}$. 
 For  $(u_0,v_0) \in \mathbb{D}$, the Fr\'{e}chet derivative of $J^{+}_\nu$ at $(u,v) \in \mathbb{D}$ is given as follow 
\begin{align*}
     \langle (J^{+}_{\nu})'(u,v) | (u_0,v_0)\rangle &= \iint_{\mathbb{R}^{2N}} \frac{(u(x)-u(y))(u_0(x)-u_0(y))}{|x-y|^{N+2s_{1}}} \,\mathrm{d}x \mathrm{d}y  \\ &\quad+ \iint_{\mathbb{R}^{2N}} \frac{(v(x)-v(y))(v_0(x)-v_0(y))}{|x-y|^{N+2s_{2}}} \,\mathrm{d}x \mathrm{d}y  - \lambda_1 \int_{\mathbb{R}^{N}} \frac{u\cdot u_0}{|x|^{2s_{1}}}\,\mathrm{d}x\\&\quad- \- \lambda_2 \int_{\mathbb{R}^{N}} \frac{v \cdot v_0}{|x|^{2s_2}}\,\mathrm{d}x - \int_{\mathbb{R}^{N}} (u^+)^{{2^{*}_{s_{1}}}-1}  u_0 \,\mathrm{d}x
        - \int_{\mathbb{R}^{N}} (v^{+})^{{2^{*}_{s_{2}}}-1}  v_0 \,\mathrm{d}x\\
       & \quad - 2\nu  \int_{\mathbb{R}^{N}}h(x) u^+  u_0 v \,\mathrm{d}x
        - \nu \int_{\mathbb{R}^{N}}h(x)(u^+)^{2}  v_0 \,\mathrm{d}x.
\end{align*}
Besides, we shall denote $\mathcal{N}_{\nu}^{+}$ the Nehari manifold associated to $J_{\nu}^{+}$ as
$$\mathcal{N}^{+}_{\nu} = \{  (u, v) \in \mathbb{D} \backslash \{(0, 0)\} : \langle (J_{\nu}^{+})'(u,v) | (u,v) \rangle = 0    \}.$$
Also for every $(u,v) \in \mathcal{N}^{+}_{\nu}$, we have the following
\begin{align} \label{equivalent norm for modified problem}
    \|(u,v)\|_{\mathbb{D}}^{2} = \|u^{+}\|_{{2_{s_{1}}^{*}}}^{2_{s_{1}}^{*}} + \|v^{+}\|_{{2_{s_{2}}^{*}}}^{2_{s_{2}}^{*}} + 3 \nu  \int_{\mathbb{R}^{N}} h(x)(u^{+})^{2} v^{}\,\mathrm{d}x,
\end{align}
and using this we can write the functional $J_{\nu}^{+}$ restricted on the Nehari manifold $\mathcal{N}^{+}_{\nu}$ as 
\begin{align} \label{restricted functonal on nehari manifold of modified problem}
    J_{\nu}^{+}|_{\mathcal{N}^{+}_{\nu}}(u,v) = \frac{1}{6}\|(u,v)\|_{\mathbb{D}}^{2} + \frac{6s_1 -N}{6N}\|u^{+}\|_{{2_{s_{1}}^{*}}}^{2_{s_{1}}^{*}} + \frac{6s_2 -N}{6N} \|v^{+}\|_{{2_{s_{2}}^{*}}}^{2_{s_{2}}^{*}}.
\end{align}
In the following lemma, we prove strong convergence when certain Palais-Smale level conditions are imposed. 
\begin{lemma} \label{PS compactness lemma second}
 Assume $ \max\{2s_{1},2s_{2}\}< N < \min\{6s_{1},6s_{2}\}$ and (\ref{condition on h}). Also let $S^{\frac{N}{2s_{1}}}(\lambda_{1}) \geq S^{\frac{N}{2s_{2}}}(\lambda_{2})$ and 
\begin{align} \label{Sum is less than S}
    S^{\frac{N}{2s_{1}}}(\lambda_{1})+S^{\frac{N}{2s_{2}}}(\lambda_{2}) < \min\{S_1^{\frac{N}{2s_{1}}},{S_2^{\frac{N}{2s_{2}}}}\}.
\end{align}
Then, there exists $\nu_0>0$ such that, if $0< \nu \leq \nu_0$ and $\{(u_{n},v_{n})\} \subset \mathbb{D}$ is a (PS) sequence for $J_{\nu}^{+}$ at level $c \in \mathbb{R}$ such that 
\begin{align}\label{c is less than sum}
    \frac{s_{1}}{N}S^{\frac{N}{2s_{1}}}(\lambda_{1}) < c < \frac{\min\{s_{1},s_{2}\}}{N}\bigg(S^{\frac{N}{2s_{1}}}(\lambda_{1})+S^{\frac{N}{2s_{2}}}(\lambda_{2})\bigg),
\end{align}
and 
\begin{align} \label{c is not equal modified problem}
    c \neq \frac{s_{2}l}{N}S^{\frac{N}{2s_{2}}}(\lambda_{2})~\text{for every}~l\in \mathbb{N}\backslash \{0\},
\end{align}
then there exists $ (\Tilde{u},\Tilde{v}) \in \mathbb{D}$ such that up to a subsequence $(u_{n},v_{n}) \rightarrow (\Tilde{u},\Tilde{v}) ~in~\mathbb{D} ~\text{as}~n \rightarrow \infty$.
\end{lemma}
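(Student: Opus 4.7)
The plan is to sharpen the concentration-compactness argument behind Lemma \ref{PS compactness lemma} so that it applies in the higher energy range prescribed by \eqref{c is less than sum}. First, Lemma \ref{boundedness lemma} furnishes a bounded (PS) sequence, so up to a subsequence $(u_n, v_n) \rightharpoonup (\tilde u, \tilde v)$ in $\mathbb{D}$; standard weak-continuity arguments show that $(\tilde u, \tilde v)$ is a weak solution of the modified problem \eqref{modified problem}, and testing $(J_\nu^+)'(\tilde u, \tilde v)$ against $(\tilde u^-, 0)$ and $(0, \tilde v^-)$ forces $\tilde u, \tilde v \geq 0$ a.e. As in Lemma \ref{PS compactness lemma}, the fractional concentration-compactness principles of Bonder, Chen, and Pucci-Temperini yield the Dirac mass decomposition of the measures attached to $|D^{s_i}\cdot|^2$, $|\cdot|^{2_{s_i}^*}$, and $|\cdot|^2/|x|^{2s_i}$, with possible concentration on countable families $\{x_j\}$, $\{y_k\}$, at the origin, and at infinity in the sense of \eqref{Concentration at infty}.

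Testing $(J_\nu^+)'(u_n, v_n)$ against the cutoff combinations $(u_n \Psi_{j,\epsilon}, 0)$, $(0, v_n \Psi_{k,\epsilon})$, $(u_n \Psi_{0,\epsilon}, 0)$, $(0, v_n \Psi_{0,\epsilon})$, and their analogues at infinity reproduces the dichotomies leading to \eqref{rho node inequalities}--\eqref{rho infty inequalities}, together with the bounds $\rho_j \geq S_1^{N/(2s_1)}$ at points $x_j \neq 0$ and $\bar\rho_k \geq S_2^{N/(2s_2)}$ at points $y_k \neq 0$. Assumption \eqref{Sum is less than S} then rules out every $\rho_j$ or $\bar\rho_k$ with $x_j, y_k \neq 0$, because the resulting contribution $\tfrac{s_1}{N}S_1^{N/(2s_1)}$ or $\tfrac{s_2}{N}S_2^{N/(2s_2)}$ to $c$ would exceed the upper bound in \eqref{c is less than sum}. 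The same upper bound also forbids the simultaneous survival of both a $u$-type and a $v$-type Dirac mass among $\{\rho_0, \rho_\infty, \bar\rho_0, \bar\rho_\infty\}$, since $\tfrac{s_1}{N}S^{N/(2s_1)}(\lambda_1) + \tfrac{s_2}{N}S^{N/(2s_2)}(\lambda_2) \geq \tfrac{\min\{s_1,s_2\}}{N}(S^{N/(2s_1)}(\lambda_1) + S^{N/(2s_2)}(\lambda_2))$.

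Only two residual scenarios remain. In Scenario $A$, every surviving Dirac mass is of $v$-type, so $c - J_\nu^+(\tilde u, \tilde v) = \tfrac{s_2 l}{N}S^{N/(2s_2)}(\lambda_2)$ for some $l \in \{0,1,2\}$. If $l \geq 1$, a Brezis-Lieb splitting together with $h \in L^1 \cap L^\infty$ makes the coupling integral vanish in the remainder and isolates the limiting equation for $\tilde u$; applying Lemma \ref{Abdelloui fractional version} to $\tilde u$ with $\delta = 3$ and $\nu$ small pushes $\tilde u \equiv 0$, so the residual energy reduces to $J_\nu^+(0, \tilde v)$, which on the semi-trivial critical set is itself a non-negative integer multiple of $\tfrac{s_2}{N}S^{N/(2s_2)}(\lambda_2)$. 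This makes $c$ an integer multiple of $\tfrac{s_2}{N}S^{N/(2s_2)}(\lambda_2)$, contradicting \eqref{c is not equal modified problem}. In Scenario $B$, a $u$-type bubble survives, giving $c - J_\nu^+(\tilde u, \tilde v) \geq \tfrac{s_1}{N}S^{N/(2s_1)}(\lambda_1)$; the symmetric use of Lemma \ref{Abdelloui fractional version} applied to $\tilde v$ forces $\tilde v \equiv 0$, and then $S^{N/(2s_1)}(\lambda_1) \geq S^{N/(2s_2)}(\lambda_2)$ combined with the upper bound in \eqref{c is less than sum} produces a contradiction.

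Once all Dirac masses are excluded, strong convergence $u_n \to \tilde u$ in $L^{2_{s_1}^*}$ and $v_n \to \tilde v$ in $L^{2_{s_2}^*}$ follows; then $\langle (J_\nu^+)'(u_n, v_n) \mid (u_n - \tilde u, v_n - \tilde v)\rangle \to 0$ together with the vanishing of the critical and coupling remainders upgrades this to strong convergence in $\mathbb{D}$. The main obstacle is Scenario $A$ with $l \geq 1$: one must leverage the smallness of $\nu$ through the $T_\nu$-type inequality of Lemma \ref{Abdelloui fractional version} in a quantitative way to force $\tilde u \equiv 0$ in the presence of a $v$-bubble, so that the integrality structure of $v$-only critical energies allows the exclusion \eqref{c is not equal modified problem} to rule out the remaining levels.
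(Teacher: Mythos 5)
Your skeleton (boundedness, reduction to a non-negative sequence, concentration--compactness decomposition, exclusion of interior bubbles via \eqref{Sum is less than S}, and the impossibility of a simultaneous $u$-type and $v$-type Dirac mass) matches the paper's proof. The genuine gap is in how you dispose of the two residual scenarios, and it stems from a misreading of Lemma \ref{Abdelloui fractional version}. That lemma states that if a \emph{positive} number $\varrho$ satisfies $C\varrho^{\frac{N-2s}{N}}\le \varrho + D\nu\varrho^{\frac{\delta}{2}\frac{N-2s}{N}}$, then $\varrho\ge(1-\epsilon)C^{\frac{N}{2s}}$ for $\nu$ small: it gives a quantitative \emph{lower} bound on the $L^{2^*}$-mass of a nontrivial component, and it cannot ``push $\tilde u\equiv 0$'' or ``force $\tilde v\equiv 0$'' as you assert in Scenarios $A$ and $B$. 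The paper uses it in exactly the opposite direction: it first shows, by separate arguments, that \emph{both} $\tilde u\not\equiv 0$ and $\tilde v\not\equiv 0$, then applies the lemma to $\sigma_1=\int_{\mathbb{R}^N}\tilde u^{2_{s_1}^*}\,\mathrm{d}x$ to get $\sigma_1\ge(1-\epsilon)S^{\frac{N}{2s_1}}(\lambda_1)$, which forces $J_\nu(\tilde u,\tilde v)\ge\frac{s_2}{N}S^{\frac{N}{2s_1}}(\lambda_1)$ and contradicts the energy left over after subtracting the surviving $v$-bubble. With your reading, the hardest subcase --- both weak limits nontrivial together with a single bubble at $0$ or $\infty$ --- is simply not addressed.

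Two further points. First, your quantization claim in Scenario $A$, namely $c-J^+_\nu(\tilde u,\tilde v)=\frac{s_2 l}{N}S^{\frac{N}{2s_2}}(\lambda_2)$ with $l$ an integer, is not automatic: the decomposition only yields $\bar\rho_0,\bar\rho_\infty\ge S^{\frac{N}{2s_2}}(\lambda_2)$, and exact quantization requires the classification of Palais--Smale sequences for the decoupled functional $J_2$ (the Pisante and Bhakta results the paper cites), which is applicable only after one already knows $\tilde u\equiv 0$ so that the coupling term vanishes; even then the admissible levels are $\frac{s_2}{N}\big(mS_2^{\frac{N}{2s_2}}+lS^{\frac{N}{2s_2}}(\lambda_2)\big)$, and both \eqref{Sum is less than S} and \eqref{c is not equal modified problem} are needed to exclude them. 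Second, in the subcase where a $u$-bubble survives and $\tilde u,\tilde v\not\equiv 0$, the paper's contradiction runs through the identity $\Tilde{c}_\nu=\frac{s_2}{N}S^{\frac{N}{2s_2}}(\lambda_2)$ for $\nu$ small (Theorem \ref{third theorem}): it shows $J_\nu(\tilde u,\tilde v)=c-\frac{s_1}{N}\rho_j<\frac{s_2}{N}S^{\frac{N}{2s_2}}(\lambda_2)$ and compares with the known value of the ground-state level. Nothing in your proposal substitutes for this step, so as written the argument does not close.
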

\begin{proof}
Without loss of generality suppose that $s_{1} \geq s_{2}$. Following Lemma \ref{boundedness lemma} it is easy to prove that the (PS) sequence for $J_{\nu}^{+}$ is bounded in $\mathbb{D}$. Thus, there exist a $(\Tilde{u},\Tilde{v}) \in \mathbb{D}$ and a subsequence $\{(u_{n},v_{n})\}$ such that $\{(u_{n},v_{n})\}$ converges weakly to $(\Tilde{u},\Tilde{v})$ in $\mathbb{D}$. Further
\begin{align*}
    \big\langle (J_{\nu}^{+})'(u_{n},v_{n}) | (u^{-}_{n},0) \big\langle = \iint_{\mathbb{R}^{2N}} \frac{(u_{n}(x)-u_{n}(y))(u_{n}^{-}(x)-u_{n}^{-}(y))}{|x-y|^{N+2s_{1}}}\, \mathrm{d}x \mathrm{d}y - \lambda_{1} \int_{\mathbb{R}^{N}}\frac{u_{n}u_{n}^{-}}{|x|^{2s_{1}}}\, \mathrm{d}x.
\end{align*}
Since $u = u^{+}+u^{-}$ and we know that $u^{+}u^{-} = 0$, as both $u^{+}$ and $u^{-}$ can not be positive simultaneously. Hence,
\begin{align*}
     \big\langle (J_{\nu}^{+})'(u_{n},v_{n}) | (u^{-}_{n},0) \big\rangle &= \iint_{\mathbb{R}^{2N}} \frac{(u_{n}^{-}(x)-u_{n}^{-}(y))^{2}}{|x-y|^{N+2s_{1}}}\, \mathrm{d}x \mathrm{d}y\\ &\quad+ \iint_{\mathbb{R}^{2N}} \frac{\big(-u_{n}^{+}(x)u_{n}^{-}(y)-u_{n}^{+}(y) u_{n}^{-}(x)\big)}{|x-y|^{N+2s_{1}}}\, \mathrm{d}x \mathrm{d}y- \lambda_{1} \int_{\mathbb{R}^{N}}\frac{(u_{n}^{-})^{2}}{|x|^{2s_{1}}}\,\mathrm{d}x\\
     &\geq \iint_{\mathbb{R}^{2N}} \frac{(u_{n}^{-}(x)-u_{n}^{-}(y))^{2}}{|x-y|^{N+2s_{1}}}\, \mathrm{d}x \mathrm{d}y - \lambda_{1} \int_{\mathbb{R}^{N}}\frac{(u_{n}^{-})^{2}}{|x|^{2s_{1}}}\,\mathrm{d}x.
\end{align*}
From Hardy's inequality \eqref{fractional Hardy inequality}, we have 
\begin{align*}
      (1-\lambda_{1}C) \iint_{\mathbb{R}^{2N}} \frac{(u_{n}^{-}(x)-u_{n}^{-}(y))^{2}}{|x-y|^{N+2s_{1}}}\, \mathrm{d}x \mathrm{d}y \leq \iint_{\mathbb{R}^{2N}} \frac{(u_{n}^{-}(x)-u_{n}^{-}(y))^{2}}{|x-y|^{N+2s_{1}}}\, \mathrm{d}x \mathrm{d}y - \lambda_{1} \int_{\mathbb{R}^{N}}\frac{(u_{n}^{-})^{2}}{|x|^{2s_{1}}}\,\mathrm{d}x,
\end{align*}
where $C = 1/\Lambda_{N,s_{1}}$. Now using the fact that $(J_{\nu}^{+})'(u_{n},v_{n}) \rightarrow 0~\text{in}~\mathbb{D}^*$, we have $$\big\langle (J_{\nu}^{+})'(u_{n},v_{n}) | (u^{-}_{n},0) \big\rangle \rightarrow 0 ~\text{as}~n \rightarrow \infty,$$ and hence we conclude from above inequalities that the sequence $\langle u^-_n \rangle \rightarrow 0$ strongly in $\mathcal{D}^{s_1,2}(\mathbb{R}^{N})$. Proceeding in a similar way, we also have that $\langle v_{n}^{-} \rangle \rightarrow 0 ~\text{strongly in}~\mathcal{D}^{s_2,2}(\mathbb{R}^{N})$.  Therefore, there is no loss for considering $\{(u_{n},v_{n})\}$ as a non-negative (PS) sequence at level $c$ for the functional $J_{\nu}^{+}$.\

By using the analogous approach of Lemma \ref{PS compactness lemma}, we can deduce the existence of a subsequence, still denoted by $\{(u_{n},v_{n})\}$, two (at most countable) sets of points $\{x_{j}\}_{j \in \mathcal{J}} \subset \mathbb{R}^{N}$ and $\{y_{k}\}_{k \in \mathcal{K}} \subset \mathbb{R}^{N}$ and non-negative numbers $\{(\mu_{j},\rho_{j})\}_{j \in \mathcal{J}}, \{(\Bar{\mu}_{k},\Bar{\rho}_{k})\}_{k \in \mathcal{K}},$ $\mu_{0},\rho_{0},\gamma_{0},\Bar{\mu}_{0},\Bar{\rho}_{0}$ and $\Bar{\gamma}_{0}$ such that the $weak^*$ convergence given by (\ref{Concentration compactness}) is satisfied as well as the inequalities (\ref{rho concentartion}-\ref{rho node inequalities}) also hold.\\
The concentration at infinity given by the numbers $\mu_{\infty},\rho_{\infty}, \Bar{\mu}_{\infty}$ and $\Bar{\rho}_{\infty}$ as in (\ref{Concentration at infty}), for which (\ref{functional with first component at infty}) and (\ref{rho infty inequalities}) hold, are also defined in a similar way. Next, we prove the strong convergence:
\begin{align} \label{component convergence either or case}
   \text{ either}~u_{n} \rightarrow \Tilde{u}~\text{in}~L^{2^{*}_{s_{1}}}(\mathbb{R}^{N})~\text{or}~v_{n} \rightarrow \Tilde{v}~\text{in}~L^{2^{*}_{s_{2}}}(\mathbb{R}^{N}).
\end{align}
The proof follows by using the method of contradiction. So we suppose that both the sequences $\{u_{n}\}$ and $\{v_{n}\}$ do not converge strongly in $L^{2^{*}_{s_{1}}}(\mathbb{R}^{N})$ and $L^{2^{*}_{s_{2}}}(\mathbb{R}^{N})$ respectively. Thus, there exists $j \in \mathcal{J}\cup\{0,\infty\}$ and  $k \in \mathcal{K}\cup\{0,\infty\}$ such that $\rho_{j}>0$ and $\Bar{\rho}_{k}>0$. Using the expressions,(\ref{rho concentartion}--\ref{rho node inequalities}) and (\ref{value of c}) into the equation \eqref{ inequalities with c}, we obtain
\begin{align*} 
     c &= \frac{1}{6} \|(u_{n},v_{n})\|_{\mathbb{D}}^{2}+ \frac{6s_1 -N}{6N}  \|u_{n}\|_{{2_{s_{1}}^{*}}}^{2_{s_{1}}^{*}}+ \frac{6s_2 -N}{6N}  \|v_{n}\|_{{2_{s_{2}}^{*}}}^{2_{s_{2}}^{*}}  +o(1)\\
     &\geq \frac{1}{6} \bigg(S(\lambda_{1})\rho_{j}^{\frac{2}{2_{s_{1}}^{*}}} + S(\lambda_{2})\Bar{\rho}_{k}^{\frac{2}{2_{s_{2}}^{*}}}\bigg) + \frac{6s_1 -N}{6N}\rho_{j}+ \frac{6s_2 -N}{6N}\Bar{\rho}_{k}\\
    & \geq \frac{s_{2}}{N}S^{\frac{N}{2s_{1}}}(\lambda_{1}) + \frac{s_{2}}{N} S^{\frac{N}{2s_{2}}}(\lambda_{2}).
\end{align*}
The aforementioned inequality contradicts assumption (\ref{c is less than sum}), so claim (\ref{component convergence either or case}) is proved. Thereafter, we prove that:
\begin{align} \label{component convergence either or case Dsp Space}
    \text{either}~u_{n} \rightarrow \Tilde{u}~\text{strongly~in}~\mathcal{D}^{s_{1},2}(\mathbb{R}^{N})~\text{or}~v_{n} \rightarrow \Tilde{v}~\text{strongly~in}~\mathcal{D}^{s_{2},2}(\mathbb{R}^{N}).
\end{align}
We assume by the claim (\ref{component convergence either or case}) that the sequence $\{u_{n}\}$ strongly converges in $L^{2_{s_{1}}^{*}}(\mathbb{R}^{N})$. This implies that we have the following convergence
\begin{align*}
    \|u_{n}-\Tilde{u}\|_{\lambda_{1},s_{1}}^{2} = \big\langle J'_{\nu}(u_{n},v_{n})|(u_{n}-\Tilde{u},0)\big\rangle +o_n(1),
\end{align*}
which clearly shows $u_{n} \rightarrow \Tilde{u}$ in $\mathcal{D}^{s_{1},2}(\mathbb{R}^{N})$. Again, if we suppose that  $\{v_{n}\}$ strongly converges in $L^{2_{s_{2}}^{*}}(\mathbb{R}^{N})$, then $v_{n} \rightarrow \Tilde{v}$ in $\mathcal{D}^{s_{2},2}(\mathbb{R}^{N})$. Hence, the proof of claim \eqref{component convergence either or case Dsp Space} is done. Further, we consider two cases to prove the strong convergence of both the components $\{u_{n}\}$,$\{v_{n}\}$ in $\mathcal{D}^{s_{1},2}(\mathbb{R}^{N})$ and $\mathcal{D}^{s_{2},2}(\mathbb{R}^{N})$, respectively.

\noindent \textbf{Case 1:} The sequence $\{v_{n}\}$ converges strongly to $\Tilde{v}$ in $\mathcal{D}^{s_{2},2}(\mathbb{R}^{N})$.\\
On the contrary, assume that none of its subsequences converge. Let us assume first that the set $\mathcal{J} \cup\{0,\infty\}$ has more than one point, by combining (\ref{ inequalities with c}) with (\ref{rho concentartion}), (\ref{functional with first component at origin}), (\ref{rho node inequalities}), (\ref{functional with first component at infty}) and (\ref{rho infty inequalities}), we find
\begin{align*}
    c \geq \frac{2s_{1}}{N} S^{\frac{N}{2s_{1}}}(\lambda_{1}) \geq \frac{s_{2}}{N} S^{\frac{N}{2s_{1}}}(\lambda_{1})+ \frac{s_{2}}{N}S^{\frac{N}{2s_{2}}}(\lambda_{2}),
\end{align*}
 which contradicts assumption (\ref{c is less than sum}). Thus, there can not be more than one point i.e. $x_{j},~ j \in \mathcal{J}\cup \{0,\infty\}$  contains only one concentration point for the sequence $\{u_{n}\}$. Further, we shall show that $\Tilde{v} \not\equiv 0$. Again by the contradiction we assume that $\Tilde{v} \equiv 0$, then $\Tilde{u}\geq 0$ (as $u_{n}$ is a non-negative sequence) and $\Tilde{u}$ verifies
\begin{align} \label{PDE with v zero}
    (-\Delta)^{s_{1}} \Tilde{u} - \lambda_{1}\frac{\Tilde{u}}{|x|^{2s_{1}}}= \Tilde{u}^{2^{*}_{s_{1}}-1} ~\text{in} ~\mathbb{R}^{N}.
\end{align}
Therefore, for some $\mu > 0$, we have $\Tilde{u} = z_{\mu,s_1}^{\lambda_{1}}$ and $\int_{\mathbb{R}^{N}}\Tilde{u}^{2^{*}_{s_{1}}}\,\mathrm{d}x = S^{\frac{N}{2s_{1}}}(\lambda_{1})$ by \eqref{extremal value at norms}. Using the fact that there is only one concentration point for the sequence $\{u_{n}\}$, and combining  (\ref{ inequalities with c}) with (\ref{rho concentartion}), (\ref{functional with first component at origin}), (\ref{rho node inequalities}), we deduce the following inequality
\begin{align*}
    c \geq \frac{s_{1}}{N} \bigg(  \int_{\mathbb{R}^{N}}\Tilde{u}^{2_{s_{1}}^{*}}\,\mathrm{d}x + S^{\frac{N}{2s_{1}}}(\lambda_{1})\bigg) = \frac{2s_{1}}{N}S^{\frac{N}{2s_{1}}}(\lambda_{1}) \geq \frac{s_{2}}{N} S^{\frac{N}{2s_{1}}}(\lambda_{1})+ \frac{s_{2}}{N}S^{\frac{N}{2s_{2}}}(\lambda_{2}).
\end{align*}
This contradicts the assumption (\ref{c is less than sum}). In case of $\Tilde{v} \equiv 0$  and $\Tilde{u} \equiv 0$, we have that $\{u_{n}\}$ should verify the following
\begin{align*} \label{}
    (-\Delta)^{s_{1}} u_{n} - \lambda_{1}\frac{u_{n}}{|x|^{2s_{1}}} - u_{n}^{2^{*}_{s_{1}}-1} = o_n(1)~\text{in the~dual~space}~\big(\mathcal{D}^{s_{1},2}(\mathbb{R}^{N})\big)'.
\end{align*}
Which implies that
\begin{equation} \label{convergence one}
     \iint_{\mathbb{R}^{2N}} \frac{(u_{n}(x)-u_{n}(y))^{2}}{|x-y|^{N+2s_{1}}}\, \mathrm{d}x \mathrm{d}y - \lambda_{1}\int_{\mathbb{R}^{N}}\frac{u_{n}^{2}}{|x|^{2s_{1}}}\,\mathrm{d}x - \int_{\mathbb{R}^{N}}|u_{n}|^{2_{s_{1}}^{*}}\, \mathrm{d}x = o_n(1),
\end{equation}
and 
\begin{align*}
    c &= J_{\nu}(u_{n},v_{n}) + o_n(1) = J_{1}(u_{n}) + J_{2}(v_{n}) - \nu \int_{\mathbb{R}^{N}}h(x) u_{n}^{2} v_{n}\,\mathrm{d}x + o_n(1)\\
    &= J_{1}(u_{n}) + 0 - 0 + o_n(1),~~as~\{v_{n}\}~\text{strongly~converges~to}~0~\text{and}~h\in L^{1}(\mathbb{R}^{N}) \cap L^{\infty}(\mathbb{R}^{N})\\
    &= \frac{1}{2} \iint_{\mathbb{R}^{2N}} \frac{(u_{n}(x)-u_{n}(y))^{2}}{|x-y|^{N+2s_{1}}} \mathrm{d}x \mathrm{d}y - \frac{\lambda_{1}}{2}\int_{\mathbb{R}^{N}}\frac{u_{n}^{2}}{|x|^{2s_{1}}}\,\mathrm{d}x - \frac{1}{2_{s_1}^{*}}\int_{\mathbb{R}^{N}}(u_{n})^{2_{s_{1}}^{*}}\,\mathrm{d}x + o_n(1)\\
    &= \frac{1}{2} \int_{\mathbb{R}^{N}}(u_{n})^{2_{s_{1}}^{*}} dx - \frac{1}{2_{s_{1}}^{*}}\int_{\mathbb{R}^{N}}(u_{n})^{2_{s_{1}}^{*}}\, \mathrm{d}x + o_n(1)~ \text{by using}~\eqref{convergence one}\\
   &= \frac{s_{1}}{N} \int_{\mathbb{R}^{N}}(u_{n})^{2_{s_{1}}^{*}}\, \mathrm{d}x + o_n(1) = \frac{s_{1}}{N} \rho_{j}, ~as~\Tilde{u} = 0 ~\text{and}~\{u_{n}\}~\text{concentrates~at~one~point}. 
\end{align*}
Also for every $j \in \mathcal{J}$, the sequence $\{u_{n}\}$ is a positive (PS) sequence for the functional given as
\begin{align*}
    J_{j}(u) = \frac{1}{2} \iint_{\mathbb{R}^{2N}} \frac{(u_{}(x)-u_{}(y))^{2}}{|x-y|^{N+2s_{1}}} dxdy  - \frac{1}{2_{s_{1}}^{*}}\int_{\mathbb{R}^{N}}(u_{})^{2_{s_{1}}^{*}} dx
\end{align*}
Using the characterization of (PS) sequence for the functional $J_{j}$ provided by \cite{Pisante2015} (See (2.6) in the proof of Theorem 1.1), we find that $\rho_{j} = lS_1^{\frac{N}{2s_{1}}}$ for some $l \in \mathbb{N}$, which is a contradiction to (\ref{Sum is less than S}) and (\ref{c is less than sum}). Thus, $\mathcal{J} = \emptyset$. If $\{u_{n}\}$ is concentrating at points zero or infinity, we argue analogously for the functional $J_{1}$ and use the result provided by \cite{Bhakta2021} (See Theorem 2.1 part (v)) to obtain the following
\begin{align*}
    c = J_{\nu}(u_{n},v_{n}) + o(1) = J_{1}(u_{n}) + o(1) \rightarrow \frac{s_{1}l}{N}S^{\frac{N}{2s_{1}}}(\lambda_{1}),
\end{align*}
for some $l \in \mathbb{N}\cup \{0\}$. Therefore, we get a contradiction to (\ref{c is less than sum}) and hence $\Tilde{v}>0$ in $\mathbb{R}^{N}$. Further, we may assume that there exists $\Tilde{u} \not\equiv 0$ such that $u_{n} \rightharpoonup \Tilde{u}$ in $\mathcal{D}^{s_{1},2}(\mathbb{R}^{N})$. On contrary  let $\Tilde{u} = 0$, then $\Tilde{v}$ is a solution to the problem given by
\begin{align} \label{PDE with u zero}
    (-\Delta)^{s_{2}} \Tilde{v} - \lambda_{2}\frac{\Tilde{v}}{|x|^{2s_{2}}}= \Tilde{v}^{2^{*}_{s_{2}}-1} ~\text{in} ~\mathbb{R}^{N}.
\end{align}
Therefore, for some $\mu > 0$, we have $\Tilde{v} = z_{\mu,s_2}^{\lambda_{2}}$ and $\int_{\mathbb{R}^{N}}\Tilde{v}^{2^{*}_{s_{2}}}\,\mathrm{d}x = S^{\frac{N}{2s_{2}}}(\lambda_{2})$ by \eqref{extremal value at norms}. As a consequence, combining  (\ref{ inequalities with c}) with (\ref{rho concentartion}), (\ref{functional with first component at origin}), (\ref{rho node inequalities}), we conclude that
\begin{align*}
    c \geq \frac{s_{2}}{N} \int_{\mathbb{R}^{N}}\Tilde{v}^{2_{s_2}^{*}}\,\mathrm{d}x + \frac{s_{1}}{N} S^{\frac{N}{2s_1}}(\lambda_{1}) = \frac{s_{2}}{N} S^{\frac{N}{2s_{2}}}(\lambda_{2})+ \frac{s_{1}}{N} S^{\frac{N}{2s_{1}}}(\lambda_{1}) \geq \frac{s_{2}}{N} S^{\frac{N}{2s_{2}}}(\lambda_{2}) + \frac{s_{2}}{N} S^{\frac{N}{2s_{2}}}(\lambda_{2}),
\end{align*}
which contradicts the assumption (\ref{c is less than sum}). We can conclude that $\Tilde{u}, \Tilde{v} \not\equiv 0$. Further, we have
\begin{align} \label{equality with c}
    \begin{split}
        c& = J_{\nu}(u_{n},v_{n}) - \frac{1}{2}\langle J'_{\nu}(u_{n},v_{n})|(u_{n},v_{n})\rangle + o_n(1) \\
        &= \frac{s_{1}}{N}  \|u_{n}\|_{{2_{s_{1}}^{*}}}^{2_{s_{1}}^{*}}+ \frac{s_{2}}{N}  \|v_{n}\|_{{2_{s_{2}}^{*}}}^{2_{s_{2}}^{*}} +  \frac{\nu}{2}\int_{\mathbb{R}^{N}}h(x)u_{n}^{2} v_{n}\,\mathrm{d}x + o_n(1)\hspace{2.3cm}\\
        &= \frac{s_{1}}{N}  \|\Tilde{u}\|_{{2_{s_{1}}^{*}}}^{2_{s_{1}}^{*}}+ \frac{s_{2}}{N}  \|\Tilde{v}\|_{{2_{s_{2}}^{*}}}^{2_{s_{2}}^{*}} + \frac{s_{1}}{N}\rho_{j} +  \frac{\nu}{2}\int_{\mathbb{R}^{N}}h(x)\Tilde{u}^{2}\Tilde{v}\,\mathrm{d}x,
    \end{split}
\end{align}
by the concentration at $j \in \mathcal{J}\cup\{0,\infty\}$. Note that $\{v_{n}\}$ converges strongly to $\Tilde{v}$ in $\mathcal{D}^{s_{2},2}(\mathbb{R}^{N})$.\\
On the other hand, using $\langle J'_{\nu}(u_{n},v_{n})|(\Tilde{u},\Tilde{v})\rangle = o_n(1)$, we obtain the following expression
\begin{align*}
    \|(\Tilde{u},\Tilde{v})\|_{\mathbb{D}}^{2} = \|\Tilde{u}\|_{{2_{s_{1}}^{*}}}^{2_{s_{1}}^{*}} + \|\Tilde{v}\|_{{2_{s_{2}}^{*}}}^{2_{s_{2}}^{*}} + 3 \nu \int_{\mathbb{R}^{N}}h(x)\Tilde{u}^{2}\Tilde{v} \,\mathrm{d}x,
\end{align*}
which clearly shows that $(\Tilde{u},\Tilde{v}) \in \mathcal{N}_{\nu}$. Indeed, by combining (\ref{equality with c}),\eqref{three three two},(\ref{rho concentartion}),(\ref{functional with first component at origin}),(\ref{rho node inequalities}) and (\ref{ inequalities with c}), we get the following
\begin{align*}
    J_{\nu}(\Tilde{u},\Tilde{v}) &= \frac{s_{1}}{N}  \|\Tilde{u}\|_{{2_{s_{1}}^{*}}}^{2_{s_{1}}^{*}}+ \frac{s_{2}}{N}  \|\Tilde{v}\|_{{2_{s_{2}}^{*}}}^{2_{s_{2}}^{*}} +  \frac{\nu}{2}\int_{\mathbb{R}^{N}}h(x)\Tilde{u}^{2}\Tilde{v}\,\mathrm{d}x\\
    &= c - \frac{s_{1}}{N}\rho_{j} ~<~\frac{s_{2}}{N}S^{\frac{N}{2s_{1}}}(\lambda_{1})+\frac{s_{2}}{N}S^{\frac{N}{2s_{2}}}(\lambda_{2}) -\frac{s_{1}}{N} S^{\frac{N}{2s_{1}}}(\lambda_{1}) ~\leq~ \frac{s_{2}}{N} S^{\frac{N}{2s_{2}}}(\lambda_{2}). 
\end{align*}
Thus, using the above expression we have 
\begin{align*}
    \Tilde{c}_{\nu} = \inf_{(u,v) \in \mathcal{N}_{\nu}}J_{\nu}(u,v) < \frac{s_{2}}{N} S^{\frac{N}{2s_{2}}}(\lambda_{2}).
\end{align*}
But, according to Theorem \ref{third theorem}, we have $ \Tilde{c}_{\nu} = \frac{s_{2}}{N} S^{\frac{N}{2s_{2}}}(\lambda_{2})$ provided that $\nu$ is very small. Thus, we get a contradiction to the former inequality. Hence, the proof of $u_{n} \rightarrow \Tilde{u}$ strongly in $\mathcal{D}^{s_{1},2}(\mathbb{R}^{N})$ is completed.

\noindent\textbf{Case 2:} The sequence $\{u_{n}\}$ strongly converges to $\Tilde{u}$ in $\mathcal{D}^{s_{1},2}(\mathbb{R}^{N})$.\\
Our claim is that the sequence $\{v_{n}\}$ converges strongly to $\Tilde{v}$ in $\mathcal{D}^{s_{2},2}(\mathbb{R}^{N})$. Using the contradiction method, we assume that all of its subsequences do not converge. First we prove that $\Tilde{u} \not\equiv 0$. On the contrary suppose that $\Tilde{u} \equiv 0$, then $\{v_{n}\}$ is a (PS) sequence for the functional $J_{2}$ given by \eqref{value of functional componentwise} at level c. Clearly for some $\mu>0$, $\Tilde{v} =  z_{\mu,s_2}^{\lambda_{2}}$ as $\{v_{n}\} \rightharpoonup \Tilde{v}~\text{in}~ \mathcal{D}^{s_{2},2}(\mathbb{R}^{N})$ and $\Tilde{v}$ is a solution to the problem (\ref{PDE with u zero}).Also, by applying the compactness theorem given by \cite{Bhakta2021} and using \eqref{value of functional componentwise} and \eqref{extremal value at norms}, we find that
\begin{align*}
    c = \lim_{n \rightarrow +\infty}J_{2}(v_{n}) = J_{2}( z_{\mu,s_2}^{\lambda_{2}}) + \frac{s_{2}}{N}m S_2^{\frac{N}{2s_{2}}} + \frac{s_{2}}{N}l S^{\frac{N}{2s_{2}}}(\lambda_{2}) = \frac{s_{2}}{N}m S_2^{\frac{N}{2s_{2}}} + \frac{s_{2}}{N}(l+1) S^{\frac{N}{2s_{2}}}(\lambda_{2}),
\end{align*}
for some $m \in \mathbb{N}$ and $l \in \mathbb{N}\cup\{0\}$. This contradicts the assumptions (\ref{c is less than sum}) and (\ref{c is not equal modified problem}). Therefore, the conclusion $\Tilde{u} \not\equiv 0$ follows immediately. On the other hand, the assumption $\Tilde{v} \equiv 0$ implies that $\Tilde{u}$ is a solution to the problem (\ref{PDE with v zero}) and that for some $\mu>0$, we have $\Tilde{u} = z_{\mu,s_1}^{\lambda_{1}}$ . Thus, we obtain
\begin{align*}
    c \geq \frac{s_{1}}{N} \int_{\mathbb{R}^{N}}\Tilde{u}^{2_{s_{1}}^{*}}\,\mathrm{d}x + \frac{s_{2}}{N} S^{\frac{N}{2s_{2}}}(\lambda_{2}) \geq \frac{s_{2}}{N} S^{\frac{N}{2s_{1}}}(\lambda_{1})+ \frac{s_{2}}{N} S^{\frac{N}{2s_{2}}}(\lambda_{2}),
\end{align*}
contradicting (\ref{c is less than sum}). Hence, we conclude that $\Tilde{u},\Tilde{v}\not\equiv 0$. Now as $(\Tilde{u},\Tilde{v})$ is a solution of (\ref{main problem}), we have
\begin{align} \label{three three two}
   J_{\nu}(\Tilde{u},\Tilde{v}) =  \frac{s_{1}}{N} \int_{\mathbb{R}^{N}} \Tilde{u}^{2_{s_{1}}^{*}}\,\mathrm{d}x + \frac{s_{2}}{N}\int_{\mathbb{R}^{N}}\Tilde{v}^{2_{s_{2}}^{*}}\,\mathrm{d}x +  \frac{\nu}{2}\int_{\mathbb{R}^{N}}h(x)\Tilde{u}^{2}\Tilde{v}\,\mathrm{d}x.
\end{align}
Since $\{v_{n}\}$ does not strongly converge to $\Tilde{v}$ in $\mathcal{D}^{s_{2},2}(\mathbb{R}^{N})$, there exists at least $k \in \mathcal{K}\cup\{0,\infty\}$ such that $\Bar{\rho}_{k}>0$ and using again (\ref{equality with c}), we get 
\begin{align*}
    c =   \bigg(\frac{s_{1}}{N} \int_{\mathbb{R}^{N}} \Tilde{u}^{2_{s_{1}}^{*}}\,\mathrm{d}x + \frac{s_{2}}{N}\int_{\mathbb{R}^{N}}\Tilde{v}^{2_{s_{2}}^{*}}\,\mathrm{d}x +\frac{s_{2}}{N} \sum\limits_{k \in \mathcal{K}}\Bar{\rho}_{k} + \Bar{\rho}_{0} + \Bar{\rho}_{\infty}\bigg) +  \frac{\nu}{2}\int_{\mathbb{R}^{N}}h(x)\Tilde{u}^{2}\Tilde{v}\,\mathrm{d}x.
\end{align*}
By using (\ref{rho bar concentartion})-(\ref{rho node inequalities}), (\ref{three three two}) and (\ref{c is less than sum}), one has
\begin{align} \label{three three three}
\begin{split}
    J_{\nu}(\Tilde{u},\Tilde{v}) &= c - \frac{s_{2}}{N} (\sum\limits_{k \in \mathcal{K}}\Bar{\rho}_{k} + \Bar{\rho}_{0}+\Bar{\rho}_{\infty})\\
    &< \frac{s_{2}}{N} S^{\frac{N}{2s_{1}}}(\lambda_{1})+ \frac{s_{2}}{N}S^{\frac{N}{2s_{2}}}(\lambda_{2})- \frac{s_{2}}{N}S^{\frac{N}{2s}}(\lambda_{2}) = \frac{s_{2}}{N}S^{\frac{N}{2s_{1}}}(\lambda_{1}).
\end{split}
\end{align}
Further, we use the definition of $S^{\frac{N}{2s_{1}}}(\lambda_{1})$ in the first equation of (\ref{main problem}) which implies that
\begin{align}\label{inequality with sigma}
    \sigma_{1} + \nu \alpha \int_{\mathbb{R}^{N}}h(x)\Tilde{u}^{2}\Tilde{v}\,\mathrm{d}x = \iint_{\mathbb{R}^{2N}} \frac{(\Tilde{u}(x)-\Tilde{u}(y))^{2}}{|x-y|^{N+2s_{1}}}\, \mathrm{d}x \mathrm{d}y - \lambda_{1} \int_{\mathbb{R}^{N}}\frac{\Tilde{u}^{2}}{|x|^{2s_{1}}}\,\mathrm{d}x \geq S(\lambda_{1}) \sigma_{1}^{\frac{2}{2_{s_{1}}^{*}}},
\end{align}
such that $\sigma_{1} = \int_{\mathbb{R}^{N}}\Tilde{u}^{2_{s_{1}}^{*}}\,\mathrm{d}x$. Then applying H\"older's inequality leads to the following
\begin{align} \label{Holders inequality}
    \int_{\mathbb{R}^{N}}h(x)\Tilde{u}^{2}\Tilde{v}\,\mathrm{d}x \leq C(h) \bigg(\int_{\mathbb{R}^{N}}\Tilde{u}^{2_{s_{1}}^{*}}\,\mathrm{d}x\bigg)^{\frac{2}{2_{s_{1}}^{*}}}\bigg(\int_{\mathbb{R}^{N}}\Tilde{v}^{2_{s_{2}}^{*}}\,\mathrm{d}x\bigg)^{\frac{1}{2_{s_{2}}^{*}}}.
\end{align}
By combining (\ref{three three two}) and (\ref{Holders inequality}), we can transform (\ref{inequality with sigma}) into
\begin{align} \label{sigma with C}
    \sigma_{1} + C_1 \nu \sigma_{1}^{ \frac{N-2s_{1}}{N} } \geq S(\lambda_{1}) \sigma_{1}^{\frac{N-2s_{1}}{N}},
\end{align}
where the constant $C_1 >0$ depends only on $N,s,h$ and independent of $\Tilde{u}, \Tilde{v}$ and $\nu$. We know that $\Tilde{v}\not\equiv 0$, we can choose $\Tilde{\epsilon}>0$ such that $\int_{\mathbb{R}^{N}} \Tilde{v}^{2_{s_{2}}^{*}} \,\mathrm{d}x\geq \Tilde{\epsilon}$. Now take $\epsilon>0$ such a way that $\Tilde{\epsilon} \geq \epsilon S^{\frac{N}{2s_{1}}}(\lambda_{1})$. Since (\ref{sigma with C}) holds, therefore we can apply Lemma \ref{Abdelloui fractional version} to get a fixed $\nu_0>0$ such that
\begin{align*}
    \sigma_{1} \geq (1-\epsilon)S^{\frac{N}{2s_{1}}}(\lambda_{1}) ~~\text{for any}~0<\nu\leq \nu_0.
\end{align*}
Combining (\ref{three three two}) and the last estimate, we get the following inequality
\begin{align*}
    J_{\nu}(\Tilde{u},\Tilde{v}) \geq \frac{s_{1}}{N} (1-\epsilon)S^{\frac{N}{2s_{1}}}(\lambda_{1})+ \frac{s_{2}}{N}\Tilde{\epsilon} \geq \frac{s_{2}}{N} (1-\epsilon)S^{\frac{N}{2s_{1}}}(\lambda_{1})+ \frac{s_{2}}{N} \epsilon S^{\frac{N}{2s_{1}}}(\lambda_{1}) = \frac{s_{2}}{N} S^{\frac{N}{2s_{1}}}(\lambda_{1}),
\end{align*}
 contradicting the inequality (\ref{three three three}). Hence, $\{v_{n}\}$ converges strongly to $\Tilde{v}$ in $\mathcal{D}^{s_2,2}(\mathbb{R}^{N})$. Proceeding in the same way, the result can be proved for the case $s_{1} \leq s_{2}$. Finally, combining both the cases leads to the conclusion that the (PS) sequences strongly converge in $\mathbb{D}$ to a non-trivial limit. Hence the proof is complete.
\end{proof}
Now we are going to study the character of the semi-trivial solution as a critical
point of  $J_{\nu}|_{\mathcal{N}_{\nu}}$. Let us consider the decoupled energy functionals $J_{i} : \mathcal{D}^{s_{i},2}(\mathbb{R}^{N}) \rightarrow \mathbb{R}$,

\begin{equation} \label{semi trivial functionals decoupled}
    J_{i}(u) = \frac{1}{2} \iint_{\mathbb{R}^{N} \times \mathbb{R}^{N}} \frac{|u(x)-u(y)|^{2}}{|x-y|^{N+2s_{i}}} dxdy - \frac{\lambda_{i}}{2} \int_{\mathbb{R}^{N}} \frac{u^{2}}{|x|^{2s_{i}}}dx  - \frac{1}{2_{s_{i}}^{*}}\int_{\mathbb{R}^{N}} |u|^{2_{s_{i}}^{*}}dx.
\end{equation}
for $i=1,2$ such that $J_{\nu}(u,v) = J_{1}(u) + J_{2}(v) -\nu \int_{\mathbb{R}^{N}} h(x)u^{2}v^{}dx.$ Observe that $z_{u,s_{i}}^{\lambda_{i}},(i=1,2)$ defined by (see \cite{Dipierro2016}) is a global minimum of $J_{i}$ constrained on the Nehari manifold $\mathcal{N}_{i}$ defined by
\begin{align} \label{Nehari manifold decoupled}
\begin{split}
    \mathcal{N}_{i} &= \{  u \in \mathcal{D}^{s_{i},2}(\mathbb{R}^{N}) \backslash \{0\} : \langle J'_{i}(u)|u \rangle = 0 \} \\
    &= \{  u \in \mathcal{D}^{s_{i},2}(\mathbb{R}^{N}) \backslash \{0\} : \|u\|_{\lambda_{i},s_i}^2 =  \|u\|_{2_{s_{i}}^{*}}^{2_{s_{i}}^{*}} \}.
\end{split}
\end{align}
It is easy to prove that the energy levels of $z_{u,s_{i}}^{\lambda_{i}}$, are 
\begin{equation} \label{functional value on semitrivial solutions}
    J_{1}(z_{u,s_{1}}^{\lambda_{1}}) = \frac{s_{1}}{N} S^{\frac{N}{2s_{1}}}(\lambda_{1}) = J_{\nu}(z_{u,s_{1}}^{\lambda_{1}},0), ~~~J_{2}(z_{u,s_{2}}^{\lambda_{2}}) = \frac{s_{2}}{N} S^{\frac{N}{2s_{2}}}(\lambda_{2}) = J_{\nu}(0,z_{u,s_{2}}^{\lambda_{2}})
\end{equation}
for any $\mu>0$ with $S(\lambda)$ defined in (\ref{S lambda}). Given $(\Tilde{u},\Tilde{v}) \in \mathcal{N}_{\nu}$ we denote by $T_{(\Tilde{u},\Tilde{v})}\mathcal{N}_{\nu}$ the tangent space of $\mathcal{N}_{\nu}$ at $(\Tilde{u},\Tilde{v})$. Note that
\begin{equation} \label{equivalence on Tangent space}
    \phi = (\phi_{1},\phi_{2}) \in T_{(0,z_{u,s_{2}}^{\lambda_{2}})}\mathcal{N}_{\nu} \iff T_{z_{u,s_{2}}^{\lambda_{2}}}\mathcal{N}_{2}.
\end{equation}
Next, we determine the character of $(0,z_{u,s_{2}}^{\lambda_{2}})$ as critical point of $J_{\nu}|_{\mathcal{N}_{\nu}}$.
\begin{proposition} \label{Semitrivial solution as a minimizer}
There exits $\Bar{\nu}>0$ such that the following holds:
\begin{itemize}
    \item [(i)] if $0<\nu<\Bar{\nu}, ~(0,z_{u,s_{2}}^{\lambda_{2}})$ is a local minimum of $J_{\nu}$ constrained on $\mathcal{N}_{\nu}$.
    \item [(ii)] for any $\nu>\Bar{\nu}, ~(0,z_{u,s_{2}}^{\lambda_{2}})$ is a saddle point of $J_{\nu}$ constrained on $\mathcal{N}_{\nu}$.
\end{itemize}
\end{proposition}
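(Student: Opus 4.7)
\textbf{Proof proposal for Proposition \ref{Semitrivial solution as a minimizer}.} The plan is to examine the second variation of $J_\nu$ restricted to $\mathcal{N}_\nu$ at the semi-trivial critical point $(0,z)$, with $z := z_{\mu,s_{2}}^{\lambda_{2}}$, and identify a threshold value $\bar\nu$ (consistent with \eqref{ Nu Bar}) separating the positive-definite regime from the indefinite one. By the identification \eqref{equivalence on Tangent space}, a tangent vector $\phi=(\phi_1,\phi_2)\in T_{(0,z)}\mathcal{N}_\nu$ is precisely one whose second component lies in $T_z\mathcal{N}_2$ while $\phi_1\in\mathcal{D}^{s_1,2}(\mathbb{R}^N)$ is arbitrary. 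A direct computation of $J_\nu''(0,z)$, in which the mixed derivative of the coupling term $-\nu\int h\,u^{2}v$ vanishes at $u=0$ since it carries a factor $u$, yields
\begin{equation*}
\bigl\langle J_\nu''(0,z)\phi,\phi\bigr\rangle = \|\phi_1\|_{\lambda_1,s_1}^{2} \,-\, 2\nu \int_{\mathbb{R}^{N}} h(x)\,z\,\phi_1^{2}\,dx \,+\, \bigl\langle J_2''(z)\phi_2,\phi_2\bigr\rangle.
\end{equation*}
Since $z$ is a global minimizer of $J_2$ on $\mathcal{N}_2$, the last term is non-negative on $T_z\mathcal{N}_2$ and strictly positive transverse to the one-parameter dilation orbit $\{z_{\mu,s_{2}}^{\lambda_{2}}\}_\mu$.

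Next, set
\begin{equation*}
\bar\nu \,:=\, \tfrac{1}{2}\,\inf_{\phi_1\in\mathcal{D}^{s_1,2}(\mathbb{R}^N)\setminus\{0\}}\, \frac{\|\phi_1\|_{\lambda_1,s_1}^{2}}{\int_{\mathbb{R}^{N}} h(x)\,z\,\phi_1^{2}\,dx},
\end{equation*}
in agreement with \eqref{ Nu Bar}. Using $h\in L^{1}(\mathbb{R}^N)\cap L^{\infty}(\mathbb{R}^N)$ together with the decay of the fractional Aubin–Talenti profile $z$, the weighted quadratic map $\phi_1\mapsto \int hz\phi_1^{2}\,dx$ is weakly continuous on $\mathcal{D}^{s_1,2}(\mathbb{R}^N)$, while the numerator is weakly lower semicontinuous and coercive (coercivity follows from the fractional Hardy inequality together with $\lambda_1<\Lambda_{N,s_1}$); hence the infimum is attained by some $\phi_1^{*}\not\equiv 0$. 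For part (i), if $\nu<\bar\nu$, then by the very definition of $\bar\nu$ we have $\|\phi_1\|_{\lambda_1,s_1}^{2}-2\nu\!\int hz\phi_1^{2}\,dx \geq \bigl(1-\nu/\bar\nu\bigr)\|\phi_1\|_{\lambda_1,s_1}^{2}>0$ for every $\phi_1\neq 0$, which combined with the non-negativity of the $J_2''$-piece gives strict positivity of the quadratic form on every admissible direction transverse to the dilation orbit, so $(0,z)$ is a local minimum on $\mathcal{N}_\nu$. For part (ii), if $\nu>\bar\nu$, then $\phi=(\phi_1^{*},0)\in T_{(0,z)}\mathcal{N}_\nu$ produces a strictly negative value of the second variation, while $\phi=(0,\phi_2)$ with $\phi_2\in T_z\mathcal{N}_2$ transverse to the scaling direction produces a strictly positive value, so $(0,z)$ is a saddle point.

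The main obstacle I anticipate is the attainment of the Rayleigh infimum defining $\bar\nu$: one must establish weak continuity of the denominator $\phi_1\mapsto \int hz\phi_1^{2}\,dx$ on $\mathcal{D}^{s_1,2}(\mathbb{R}^N)$, and this requires careful control of possible concentration and of the tails via the $L^{1}$-integrability of $h$ coupled with the uniform boundedness and decay of $z$. A Lions-type concentration-compactness argument ruling out vanishing and dichotomy, in the spirit of the machinery already developed in Section \ref{S3}, should take care of this; a secondary technical point is to identify the kernel of $J_2''(z)|_{T_z\mathcal{N}_2}$ with the dilation direction, so that strict positivity of the quadratic form is only lost along an orbit on which $J_\nu$ is constant, which does not obstruct the local minimum conclusion in part (i).
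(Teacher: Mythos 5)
Your proposal is correct and follows essentially the same route as the paper: the same second-variation computation $J_\nu''(0,z)[(\phi_1,\phi_2)]^2=\|\phi_1\|_{\lambda_1,s_1}^2+J_2''(z)[\phi_2]^2-2\nu\int_{\mathbb{R}^N}h\,\phi_1^2 z\,dx$, the same Rayleigh-quotient definition of $\bar\nu$ as in \eqref{ Nu Bar}, positivity of the form for $\nu<\bar\nu$, and a negative direction of the type $(\phi_1,0)$ for $\nu>\bar\nu$. The only substantive difference is that you insist on attainment of the infimum defining $\bar\nu$, which is not needed (for part (ii) any $\phi_1$ with $\|\phi_1\|_{\lambda_1,s_1}^2/(2\int h\phi_1^2 z\,dx)<\nu$, guaranteed by the definition of the infimum, already gives a negative direction), while the paper instead simply asserts the coercivity bound $J_2''(z)[\phi_2]^2\geq C\|\phi_2\|_{\lambda_2,s_2}^2$ on $T_z\mathcal{N}_2$ without discussing the dilation degeneracy that you (rightly) flag.
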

\begin{proof}
$(i)$  Let us set 
\begin{equation} \label{ Nu Bar}
    \Bar{\nu}  = \inf\limits_{\phi \in \mathcal{D}^{s_{1},2}(\mathbb{R}^{N}),\phi \not\equiv 0} \frac{\|\phi\|_{\lambda_{1},s_1}^{2}}{2 \int_{\mathbb{R}^{N}} h(x) \phi^{2}z_{\mu,s_{2}}^{\lambda_{2}} dx}.
\end{equation}
Next, given $\phi = (\phi_{1},\phi_{2}) \in T_{(0,z_{\mu,s_{2}}^{\lambda_{2}})}\mathcal{N}_{\nu}$, we have
\begin{equation} \label{two two zero}
    J_{\nu}^{''}(0,z_{\mu,s_{2}}^{\lambda_{2}})[(\phi_{1},\phi_{2})]^{2} = \|\phi_{1}\|_{\lambda_{1},s_1}^{2} + J_{2}^{''}(z_{\mu,s_{2}}^{\lambda_{2}})[\phi_{2}]^{2} - 2\nu \int_{\mathbb{R}^{N}} h(x) \phi^{2}z_{\mu,s_{2}}^{\lambda_{2}} dx.
\end{equation}
As $z_{\mu,s_{2}}^{\lambda_{2}}$ is a minimum of $J_{2}$ on $\mathcal{N}_{2}$ and $\phi_{2} \in T_{z_{u,s_{2}}^{\lambda_{2}}}\mathcal{N}_{2}$, by (\ref{equivalence on Tangent space}), there exists $C>0$ such that 
\begin{equation}\label{two two one}
    J_{2}^{''}(z_{\mu,s_{2}}^{\lambda_{2}})[\phi_{2}]^{2} \geq C \|\phi_{2}\|_{\lambda_{2},s_2}^{2}
\end{equation}
Then if $\nu < \Bar{\nu}$, there exists $c>0$ such that $J_{\nu}^{''}(0,z_{u,s_{2}}^{\lambda_{2}})[(\phi_{1},\phi_{2})]^{2} \geq c (\|\phi_{1}\|_{\lambda_{1},s_1}^{2}+\|\phi_{2}\|_{\lambda_{2},s_2}^{2})$, which proves that $(0,z_{u,s_{2}}^{\lambda_{2}})$ is a local strict minimum of $J_{\nu}$ constrained on $\mathcal{N}_{\nu}$. \\
$(ii)$ First we note that by (\ref{two two zero}) and (\ref{two two one}),
\begin{equation} \label{two two two}
    J_{\nu}^{''}(0,z_{u,s_{2}}^{\lambda_{2}})[(0,\phi_{2})]^{2} =    J_{\nu}^{''}(z_{u,s_{2}}^{\lambda_{2}})[\phi_{2}]^{2} \geq C \|\phi_{2}\|_{\lambda_{2},s_2}^{2}.
\end{equation}
On the other hand, if we take $\phi=(\phi_{1},0)$ such that
$$\nu > \frac{\|\phi\|_{\lambda_{1},s_1}^{2}}{2 \int_{\mathbb{R}^{N}} h(x) \phi^{2}z_{u,s_{2}}^{\lambda_{2}} dx} > \Bar{\nu},$$
we get 
\begin{equation} \label{ two twenty three}
    J_{\nu}^{''}(0,z_{u,s_{2}}^{\lambda_{2}})[(\phi_{1},0)]^{2} = \|\phi_{1}\|_{\lambda_{1},s_1}^{2} - 2\nu \int_{\mathbb{R}^{N}} h(x) \phi^{2}z_{u,s_{2}}^{\lambda_{2}} dx < 0 ~~for~any~\nu>\Bar{\nu}.
\end{equation}
Thus by (\ref{two two two}) and (\ref{ two twenty three}) we conclude that $(0,z_{u,s_{2}}^{\lambda_{2}})$ is a saddle point of $J_{\nu}$ on $\mathcal{N}_{\nu}$.
\end{proof}
\begin{remark}
Although the pair $(z_{u,s_{1}}^{\lambda_{1}},0)$ is not a critical point of the energy functional $J_{\nu}$, this pair does belong to the Nehari manifold $\mathcal{N}_{\nu}$.
\end{remark} 

\subsection{The case \texorpdfstring{$N = \min\{6s_1, 6s_2\} $}{TEXT}} 
We assume some extra continuity assumptions on $h$ to deal with this critical case. Precisely,
more hypotheses on the function $h$ are supposed to address this case. In particular,
\begin{equation}\label{H one}
  0 \leq h \in L^{1}(\mathbb{R}^{N}) \cap L^{\infty}(\mathbb{R}^{N})
  ,~h~\text{continuous~near}~0~\text{and}~\infty,~\text{and}~h(0) = \lim\limits_{|x| \rightarrow +\infty}h(x) = 0. \tag\mathbb{H1}
\end{equation}
Adding to that, we will distinguish two different cases: one with $h$ radial case and the other when $h$ is non-radial. To deal with the $h$ non-radial case, we need to impose one extra assumption on $\nu$ i.e., it should be small enough.
Let us define the space of radial functions in $\mathbb{D}$ 
\[ \mathbb{D}_r := \mathcal{D}^{s_1,2}_r(\mathbb{R}^{N}) \times \mathcal{D}^{s_2,2}_r(\mathbb{R}^{N}) = \{(u,v) \in \mathbb{D} : u ~\text{and}~v~\text{are radially symmetric}\}. \]

\begin{lemma}\label{critical with h radial}
Assume that $N = \min\{6s_1, 6s_2\} $ and (\ref{H one}), and $h$ is a radial function.
\begin{enumerate}
    \item[(i)] If $\{(u_{n},v_{n})\} \subset \mathbb{D}_{r}$ is a PS sequence for $J_{\nu}$ at level $c \in \mathbb{R}$ such that c satisfies (\ref{energy level PS}), then the sequence $\{(u_{n},v_{n})\}$ admits a subsequence strongly converging in $\mathbb{D}$.
   \item[(ii)] If $S^{\frac{N}{2s_{1}}}(\lambda_{1}) \geq S^{\frac{N}{2s_{2}}}(\lambda_{2})$ and $\{(u_{n},v_{n})\} \subset \mathbb{D}_{r}$ is a PS sequence for $J_{\nu}^{+}$ at level $c \in \mathbb{R}$ such that c satisfies (\ref{c is less than sum}) and (\ref{c is not equal modified problem}), then there exists ${\nu}_{1} > 0$ and $(\Tilde{u},\Tilde{v}) \in \mathbb{D}_{r}$ such that $(u_{n},v_{n}) \rightarrow (\Tilde{u},\Tilde{v})~ \text{in}~ \mathbb{D}_{r}$ up to subsequence for every $0 < \nu \leq  {\nu}_{1}$. 
\end{enumerate}
\end{lemma}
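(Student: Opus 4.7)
The plan is to imitate the proofs of Lemmas \ref{PS compactness lemma} and \ref{PS compactness lemma second}, exploiting radial symmetry to neutralise the loss of the coercive Sobolev remainder that occurs in the critical dimension. First, by Lemma \ref{boundedness lemma}, the sequence $\{(u_n,v_n)\}$ is bounded in $\mathbb{D}$, hence in $\mathbb{D}_r$; weak closedness of $\mathbb{D}_r$ under the action of $O(N)$ yields a subsequence converging weakly to some $(\tilde u,\tilde v)\in\mathbb{D}_r$. Apply the concentration–compactness principles of Bonder et al., Chen et al., and Pucci–Temperini to obtain the atomic decompositions \eqref{Concentration compactness} and the inequalities \eqref{inequality with S}--\eqref{inequality with lambda}, together with the concentration at infinity \eqref{Concentration at infty} and the bounds \eqref{functional with first component at infty}.

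The decisive input from radial symmetry is that every limiting Radon measure in \eqref{Concentration compactness} inherits rotational invariance from the sequence. Consequently, if $\mu_j\delta_{x_j}$ were an atom with $x_j\neq 0$, its entire $O(N)$-orbit $\{Rx_j:R\in O(N)\}$ would be a sphere of atoms of equal mass, forcing the total variation to be infinite and contradicting the boundedness of $\|u_n\|_{2_{s_1}^*}$. Hence $\mathcal{J}\subset\{0\}$ and $\mathcal{K}\subset\{0\}$, so only the concentrations at the origin and at infinity remain to be discarded.

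For part~(i), assume without loss of generality $N=6s_1\leq 6s_2$. The identity \eqref{value of c} then reads $c=\tfrac{1}{6}\|(u_n,v_n)\|_{\mathbb{D}}^2+\tfrac{6s_2-N}{6N}\|v_n\|_{2_{s_2}^*}^{2_{s_2}^*}+o(1)$, and combining this with \eqref{Concentration compactness}, \eqref{functional with first component at origin}, \eqref{rho node inequalities}, \eqref{functional with first component at infty}, and \eqref{rho infty inequalities} produces, as in the proof of Lemma \ref{PS compactness lemma},
\[ c\;\geq\;\tfrac{1}{6}\bigl(S(\lambda_1)\rho_0^{2/2_{s_1}^*}+S(\lambda_1)\rho_\infty^{2/2_{s_1}^*}+S(\lambda_2)\bar\rho_0^{2/2_{s_2}^*}+S(\lambda_2)\bar\rho_\infty^{2/2_{s_2}^*}\bigr)+\tfrac{6s_2-N}{6N}(\bar\rho_0+\bar\rho_\infty). \]
A non-vanishing $\rho_0$ or $\rho_\infty$ thus forces $c\geq\tfrac{1}{6}S^{N/2s_1}(\lambda_1)=\tfrac{s_1}{N}S^{N/2s_1}(\lambda_1)$, and similarly for the $\bar\rho$'s, contradicting \eqref{energy level PS}. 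Therefore $\rho_0=\rho_\infty=\bar\rho_0=\bar\rho_\infty=0$, yielding strong convergence of $(u_n,v_n)$ in $L^{2_{s_1}^*}(\mathbb{R}^N)\times L^{2_{s_2}^*}(\mathbb{R}^N)$, and then testing $\langle J_\nu'(u_n,v_n)\mid(u_n-\tilde u,v_n-\tilde v)\rangle=o(1)$ upgrades this to strong convergence in $\mathbb{D}$.

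For part~(ii), I would transport the dichotomy argument from Lemma \ref{PS compactness lemma second} into the radial setting: the radiality reduction has already eliminated the sums over $\mathcal{J},\mathcal{K}$, so \eqref{c is less than sum} immediately yields \eqref{component convergence either or case}, i.e.\ at least one component converges strongly in its critical Lebesgue space, and then the split into Case~1 and Case~2 proceeds verbatim, with the condition \eqref{c is not equal modified problem} ruling out the Bhakta-type profile decomposition of the remaining component at the origin and at infinity. The main obstacle, just as in Lemma \ref{PS compactness lemma second}, is the endgame in Case~2 where one shows $\tilde v\not\equiv 0$: the Hölder estimate \eqref{Holders inequality} together with \eqref{inequality with sigma} yields an inequality of the form $\sigma_1+C\nu\sigma_1^{(N-2s_1)/N}\geq S(\lambda_1)\sigma_1^{(N-2s_1)/N}$, which by Lemma \ref{Abdelloui fractional version} produces the threshold $\nu_1>0$ such that $\sigma_1\geq(1-\epsilon)S^{N/2s_1}(\lambda_1)$ for $0<\nu\leq\nu_1$, contradicting \eqref{three three three}. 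This smallness of $\nu$ is exactly where the restriction $0<\nu\leq\nu_1$ in the statement is consumed.
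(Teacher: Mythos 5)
Your overall strategy coincides with the paper's: radial symmetry confines the possible concentration points to $\{0,\infty\}$ (your $O(N)$-orbit argument is the same observation the paper makes via the countability of the atomic set), and one then reruns Lemmas \ref{PS compactness lemma} and \ref{PS compactness lemma second}. However, there is a genuine gap: you nowhere use hypothesis (\ref{H one}), and it is precisely the ingredient that makes the rerun possible. The dichotomies (\ref{rho node inequalities}) and (\ref{rho infty inequalities}) that your energy estimate relies on are \emph{not} free consequences of the concentration--compactness inequalities; they require the reverse inequalities $\mu_0-\lambda_1\gamma_0-\rho_0\le 0$ and $\mu_\infty-\lambda_1\gamma_\infty-\rho_\infty\le 0$, obtained by testing $J_\nu'(u_n,v_n)$ with $(u_n\Psi_{0,\epsilon},0)$ and $(u_n\Psi_{\infty,\epsilon},0)$, and in that computation one must show that the coupling term
\[
\nu\int_{\mathbb{R}^N}h(x)\,u_n^2 v_n\,\Psi_{0,\epsilon}\,\mathrm{d}x
\]
vanishes in the double limit $n\to\infty$, $\epsilon\to 0$ (and likewise at infinity). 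In the subcritical case this was (\ref{Fifth integral}), proved via H\"older with the exponent $1-\frac{2}{2_{s_1}^*}-\frac{1}{2_{s_2}^*}>0$ and $h\in L^1$. In the critical dimension $N=\min\{6s_1,6s_2\}$ one has $\frac{2}{2_{s_1}^*}+\frac{1}{2_{s_2}^*}\le 1$ with \emph{equality} when $s_1=s_2$, so the factor $\bigl(\int h\Psi_{0,\epsilon}\bigr)^{1-\frac{2}{2_{s_1}^*}-\frac{1}{2_{s_2}^*}}$ degenerates to $1$ and the $L^1$-smallness of $h$ near the concentration point no longer helps; the remaining factors converge to quantities containing $\rho_0 h(0)$ and $\bar\rho_0 h(0)$, which need not vanish. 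This is exactly why the lemma strengthens (\ref{condition on h}) to (\ref{H one}): the conditions $h(0)=\lim_{|x|\to\infty}h(x)=0$ together with continuity of $h$ near $0$ and $\infty$ are what force (\ref{three three nine}) and (\ref{three four zero}), and hence the dichotomies at $0$ and $\infty$. Without this step your part~(i) and the ``verbatim'' transfer in part~(ii) are not justified.

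A secondary, more cosmetic point: the smallness threshold in part~(ii) that you attribute solely to the Lemma \ref{Abdelloui fractional version} endgame also absorbs the constraint coming from the above coupling estimate in the paper's companion Lemma \ref{critical case with h non radial}; in the radial case treated here the threshold indeed arises as you describe, so that part of your outline is consistent with the paper once the missing vanishing of the coupling term is supplied.
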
  
\begin{proof}
 We observe that the functions $\{(u_{n},v_{n})\} \subset \mathbb{D}_{r}$ are radial and therefore we can not have concentrations at points other than $0~\text{or}~\infty$, otherwise, we will get a contradiction to the concentration–compactness principle by Bonder \cite{Bonder2018} as the set of concentration points is not a countable set.\

 Now if we want to avoid concentration at the points $0$ and $\infty$, by following the proof of Lemma \ref{PS compactness lemma} and Lemma \ref{PS compactness lemma second}, it is sufficient to show that (see (\ref{functional with first component}))

 \begin{align}\label{three three nine}
     \lim\limits_{\epsilon\rightarrow 0}\limsup\limits_{n \rightarrow +\infty}\int_{\mathbb{R}^{N}}h(x) u_{n}^{2} v_{n} \Psi_{0,\epsilon}(x)\,\mathrm{d}x = 0,
 \end{align}
\begin{align}\label{three four zero}
     \lim\limits_{R\rightarrow +\infty}\limsup\limits_{n \rightarrow +\infty}\int_{|x|>R}h(x)u_{n}^{2} v_{n}\Psi_{\infty,\epsilon}(x)\,\mathrm{d}x = 0,
\end{align}
where the cut-off function $\Psi_{0,\epsilon}$ is centred at $0$ satisfying (\ref{test function phi at j}) and the cut-off function $\Psi_{\infty,\epsilon}$ supported near $\infty$ satisfying (\ref{phi at infinity}). For any $s_1,s_2 \in (0,1)$, the assumption $N = \min\{6s_1, 6s_2\}$ implies that $\frac{2}{2_{s_{1}}^{*}} + \frac{1}{2_{s_{2}}^{*}} \leq 1$ and the equality holds if and only if $s_1 = s_2$.

If $\frac{2}{2_{s_{1}}^{*}} + \frac{1}{2_{s_{2}}^{*}} < 1$, by using the assumption on $h$ in \eqref{H one} and the H\"older's inequality, we get
  \begin{align*}
      \int_{\mathbb{R}^{N}}h(x)u_{n}^{2} v_{n}\Psi_{0,\epsilon}(x)\,\mathrm{d}x &= \int_{\mathbb{R}^{N}}(h(x)\Psi_{0,\epsilon}(x))^{1-\frac{2}{2_{s_{1}}^{*}} - \frac{1}{2_{s_{2}}^{*}}} (h(x) \Psi_{0,\epsilon}(x))^{\frac{2}{2_{s_{1}}^{*}} + \frac{1}{2_{s_{2}}^{*}} }u_{n}^{2} v_{n}\,\mathrm{d}x\\
      &\leq \bigg( \int_{\mathbb{R}^{N}}h(x)\Psi_{0,\epsilon}(x)\,dx\bigg)^{1-\frac{2}{2_{s_{1}}^{*}} - \frac{1}{2_{s_{2}}^{*}}}  \bigg(\int_{\mathbb{R}^{N}}h(x)|u_{n}|^{2_{s_{1}}^{*}}\Psi_{0,\epsilon}(x)\,\mathrm{d}x\bigg)^{\frac{2}{2_{s_{1}}^{*}}}\\ 
      &~~~~~~~~~ \bigg(\int_{\mathbb{R}^{N}}h(x)|v_{n}|^{2_{s_{2}}^{*}}\Psi_{0,\epsilon}(x)\,\mathrm{d}x\bigg)^{\frac{1}{2_{s_{2}}^{*}}}.
  \end{align*}
Now from (\ref{Concentration compactness}) and (\ref{H one}), we have
\begin{align*}
   \lim\limits_{n \rightarrow +\infty} \int_{\mathbb{R}^{N}}h(x)|u_{n}|^{2_{s_{1}}^{*}}\Psi_{0,\epsilon}(x)\,\mathrm{d}x &= \int_{\mathbb{R}^{N}}h(x)|\Tilde{u}|^{2_{s_{1}}^{*}}\Psi_{0,\epsilon}(x)\,\mathrm{d}x + \rho_{0}h(0) \\
    & \leq \int_{|x|\leq \epsilon}h(x)|\Tilde{u}|^{2_{s_{1}}^{*}} \,\mathrm{d}x, ~\text{since}~h(0)=0.
\end{align*}
and 
\begin{align*}
    \lim\limits_{n \rightarrow +\infty}\int_{\mathbb{R}^{N}}h(x)|v_{n}|^{2_{s_{2}}^{*}}\Psi_{0,\epsilon}(x)\,\mathrm{d}x &= \int_{\mathbb{R}^{N}}h(x)|\Tilde{v}|^{2_{s_{2}}^{*}}\Psi_{0,\epsilon}(x)\,\mathrm{d}x + \Bar{\rho}_{0}h(0) \\
    & \leq \int_{|x|\leq \epsilon}h(x)|\Tilde{v}|^{2_{s_{2}}^{*}} \,\mathrm{d}x, ~\text{since}~h(0)=0.
\end{align*}
By Combining the above three inequalities, we have
\begin{align*}
     \lim\limits_{\epsilon\rightarrow 0}\limsup\limits_{n \rightarrow +\infty}\int_{\mathbb{R}^{N}}h(x)u_{n}^{2} v_{n}\Psi_{0,\epsilon}(x)\,\mathrm{d}x & \leq \lim\limits_{\epsilon \rightarrow 0} \Bigg[ \bigg( \int_{|x|\leq \epsilon}h(x)\,\mathrm{d}x\bigg)^{1-\frac{2}{2_{s_{1}}^{*}} - \frac{1}{2_{s_{2}}^{*}}}  \bigg(\int_{|x|\leq \epsilon}h(x)|\Tilde{u}|^{2_{s}^{*}} dx\bigg)^{\frac{2}{2_{s}^{*}}}\\ &\qquad \bigg(\int_{|x|\leq \epsilon}h(x)|\Tilde{v}|^{2_{s}^{*}} \,\mathrm{d}x\bigg)^{\frac{1}{2_{s}^{*}}}\Bigg]\\
     &=0.
 \end{align*}
If $\frac{2}{2_{s_{1}}^{*}} + \frac{1}{2_{s_{2}}^{*}} = 1$, by using the fact that $h \in L^{\infty}(\mathbb{R}^{N})$ and the H\"older's inequality, we get
  \begin{align}\label{Holder inequality with phi at origin}
      \int_{\mathbb{R}^{N}}h(x)u_{n}^{2} v_{n}\Psi_{0,\epsilon}\,\mathrm{d}x \leq \bigg(\int_{\mathbb{R}^{N}}h(x)|u_{n}|^{2_{s_{1}}^{*}}\Psi_{0,\epsilon}\,\mathrm{d}x\bigg)^{\frac{2}{2_{s_{1}}^{*}}} \bigg(\int_{\mathbb{R}^{N}}h(x)|v_{n}|^{2_{s_{2}}^{*}}\Psi_{0,\epsilon}\,\mathrm{d}x\bigg)^{\frac{1}{2_{s_{2}}^{*}}}.
  \end{align}
Again using the above approach, we get \begin{small}
\begin{align*}
     \lim\limits_{\epsilon\rightarrow 0}\limsup\limits_{n \rightarrow +\infty}\int_{\mathbb{R}^{N}}h(x)u_{n}^{2} v_{n}\Psi_{0,\epsilon}(x)\,\mathrm{d}x & \leq \lim\limits_{\epsilon \rightarrow 0} \Bigg[ \bigg(\int_{|x|\leq \epsilon}h(x)|\Tilde{u}|^{2_{s}^{*}} \,\mathrm{d}x\bigg)^{\frac{2}{2_{s}^{*}}} \bigg(\int_{|x|\leq \epsilon}h(x)|\Tilde{v}|^{2_{s}^{*}} \,\mathrm{d}x\bigg)^{\frac{1}{2_{s}^{*}}}\Bigg] =0.
 \end{align*}\end{small}
Similarly, we can prove (\ref{three four zero}) by using the assumption that $\lim\limits_{|x|\rightarrow +\infty} h(x) = 0$.
\end{proof}
Next, we want to prove the Palais-Smale compactness condition for the case when $h$ is non-radial. For this purpose, we further assume that the parameter $\nu$ is sufficiently small and $ s_{1} = s_{2}=s$.
\begin{lemma}\label{critical case with h non radial}
Let us assume that $N=6s $ and (\ref{H one}), and $\{(u_{n},v_{n})\}$ be a (PS) sequence in $\mathbb{D}$ for $J_{\nu}$ at level $c \in \mathbb{R}$ such that c satisfies (\ref{energy level PS}). Then, there exist $(\Tilde{u},\Tilde{v}) \in \mathbb{D} ~\text{and}~ \nu_0>0$ such that $(u_{n},v_{n}) \rightarrow (\Tilde{u},\Tilde{v}) ~\text{in}~ \mathbb{D}$ up to subsequence for every $0<\nu \leq \nu_0$.
\end{lemma}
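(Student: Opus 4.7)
The plan is to follow the scheme of Lemma \ref{PS compactness lemma} and Lemma \ref{critical with h radial}, with a refined treatment of the coupling term at concentration points. By Lemma \ref{boundedness lemma}, the sequence $\{(u_n,v_n)\}$ is bounded in $\mathbb{D}$, hence up to a subsequence $(u_n,v_n) \rightharpoonup (\tilde u,\tilde v)$ weakly in $\mathbb{D}$, a.e.\ in $\mathbb{R}^N$ and strongly in $L^{q_1}_{\mathrm{loc}} \times L^{q_2}_{\mathrm{loc}}$ for subcritical exponents. Applying the fractional concentration-compactness principle exactly as in \eqref{Concentration compactness}, I obtain at most countable sets $\{x_j\}_{j \in \mathcal J}$, $\{y_k\}_{k \in \mathcal K}$ and nonnegative numbers $\mu_j,\rho_j,\bar\mu_k,\bar\rho_k,\mu_0,\rho_0,\gamma_0,\bar\mu_0,\bar\rho_0,\gamma_\infty,\rho_\infty$ etc., with the Sobolev-type lower bounds \eqref{inequality with S}, the Hardy-type bounds \eqref{inequality with lambda} and their Hardy--Sobolev improvements \eqref{functional with first component at origin}, \eqref{functional with first component at infty}.

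The concentrations at $0$ and at $\infty$ are handled exactly as in Lemma \ref{critical with h radial}: since $h$ is continuous at $0$ and $\infty$ with $h(0)=\lim_{|x|\to\infty}h(x)=0$, the coupling term vanishes in the limit $\epsilon\to 0$ after testing against the cut-offs $\Psi_{0,\epsilon}$ and $\Psi_{\infty,\epsilon}$ (this uses $N=6s$ exactly via the Hölder equality $\tfrac{2}{2_s^*}+\tfrac{1}{2_s^*}=1$ together with $h(0)=h(\infty)=0$). Consequently one obtains the usual dichotomies $\rho_0\in\{0\}\cup[S^{N/2s}(\lambda_1),\infty)$, $\bar\rho_0\in\{0\}\cup[S^{N/2s}(\lambda_2),\infty)$, and similarly for $\rho_\infty,\bar\rho_\infty$, and the assumption \eqref{energy level PS} forces all four to vanish by the same energy-comparison argument used in Lemma \ref{PS compactness lemma}.

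The main new difficulty, and the heart of the non-radial critical case, is to rule out interior concentration at any point $x_j\ne 0,\infty$ (or $y_k\ne 0,\infty$); since $h$ need not vanish there, the coupling term does not automatically go away. Here the smallness of $\nu$ enters. Testing $J_\nu'(u_n,v_n)$ against $(u_n \Psi_{j,\epsilon},0)$ and arguing as in \eqref{functional with first component}--\eqref{Fifth integral}, all terms except the coupling integral are dealt with exactly as before. For the coupling integral, because $N=6s$ I apply Hölder with exponents $(\tfrac{2_s^*}{2},2_s^*)$:
\begin{equation*}
\int_{\mathbb{R}^N} h(x) u_n^2 v_n \Psi_{j,\epsilon}\,\mathrm{d}x
\;\le\; \|h\|_\infty \Bigl(\int_{\mathbb{R}^N} u_n^{2_s^*}\Psi_{j,\epsilon}\,\mathrm{d}x\Bigr)^{\!2/2_s^*}
\Bigl(\int_{\mathbb{R}^N} v_n^{2_s^*}\Psi_{j,\epsilon}\,\mathrm{d}x\Bigr)^{\!1/2_s^*}.
\end{equation*}
Passing to the limit first in $n$ and then in $\epsilon\to 0$ gives the bound $2\nu\|h\|_\infty \rho_j^{2/2_s^*}\bar\rho_{x_j}^{1/2_s^*}$, where $\bar\rho_{x_j}$ denotes the (possibly zero) $v$-concentration at $x_j$. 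Combined with $\mu_j\ge S\rho_j^{2/2_s^*}$ this yields
\begin{equation*}
S\rho_j^{2/2_s^*} \;\le\; \rho_j + 2\nu\|h\|_\infty \rho_j^{2/2_s^*}\bar\rho_{x_j}^{1/2_s^*},
\end{equation*}
and an analogous estimate obtained by testing $(0,v_n\Psi_{j,\epsilon})$ bounds $\bar\rho_{x_j}$. Feeding both into the non-local version of Abdellaoui's lemma (Lemma \ref{Abdelloui fractional version}) gives, for $\nu$ smaller than some explicit $\nu_0>0$, the lower bound $\rho_j\ge(1-o(1))S^{N/2s}(\lambda_1)$ whenever $\rho_j>0$, and likewise for $\bar\rho_{y_k}$. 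Substituting into the energy identity \eqref{value of c} and the chain \eqref{ inequalities with c} then produces $c\ge (1-o(1))\tfrac{s}{N}S^{N/2s}(\lambda_i)$ at each concentration point, which (for $\nu$ small enough) contradicts \eqref{energy level PS}. Hence $\mathcal J=\mathcal K=\emptyset$, so $u_n\to\tilde u$ in $L^{2_s^*}$ and $v_n\to\tilde v$ in $L^{2_s^*}$. The identity $\|(u_n-\tilde u,v_n-\tilde v)\|_\mathbb{D}^2 = \langle J_\nu'(u_n,v_n)\,|\,(u_n-\tilde u,v_n-\tilde v)\rangle + o_n(1)$, together with the weak-to-strong passage for the lower order and Hardy terms, upgrades the convergence to strong convergence in $\mathbb{D}$. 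The principal obstacle is the non-vanishing of the coupling term at interior concentration points, which is precisely what the smallness of $\nu$ is tailored to defeat via Lemma \ref{Abdelloui fractional version}.
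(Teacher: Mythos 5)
Your proposal is correct and follows essentially the same route as the paper: exclude concentration at $0$ and $\infty$ exactly as in the radial lemma using $h(0)=\lim_{|x|\to\infty}h(x)=0$, and rule out interior concentration points by Hölder-estimating the coupling term and absorbing it for $\nu$ small. The only (harmless) differences are algebraic: the paper adds the two tested inequalities and exploits $2_s^*=3$ to obtain a single dichotomy $\rho_j+\bar\rho_j=0$ or $\rho_j+\bar\rho_j\ge \bigl(S/(1+2_s^*\nu\tilde C)\bigr)^{N/2s}$, whereas you absorb the coupling term componentwise via Lemma \ref{Abdelloui fractional version} with $\delta=2$ (for which you should note that the constant $D=2\|h\|_\infty\bar\rho_{x_j}^{1/2_s^*}$ is uniformly bounded thanks to the PS-boundedness, and that the resulting lower bound involves the plain Sobolev constant $S^{N/2s}$ rather than $S^{N/2s}(\lambda_1)$, which only strengthens the contradiction with \eqref{energy level PS}).
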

\begin{proof}
First, we observe that the concentrations at points $0, \infty$ can be excluded due to \eqref{three three nine} and \eqref{three four zero} given in proof of Lemma \ref{critical with h radial}. Therefore, we have only to take care of concentration points $x_{j} \neq 0,\infty$. Furthermore, we can also assume that the index $j \in \mathcal{J} \cap \mathcal{K}$. On contrary suppose that the concentration occurs at $x_{j} \in \mathbb{R}^{N}$ with $j \in \mathcal{J} \cap \mathcal{K}^c$ or $x_{k} \in \mathbb{R}^{N}$ with $k \in \mathcal{K} \cap  \mathcal{J}^c$, then it is easy to prove as done before that 
\begin{align*}
     \lim\limits_{\epsilon\rightarrow 0}\limsup\limits_{n \rightarrow +\infty}\int_{\mathbb{R}^{N}}h(x)u_{n}^{2} v_{n}\Psi_{j,\epsilon}\,\mathrm{d}x = 0,
 \end{align*}
 for a smooth cut-off function $\Psi_{j,\epsilon}$ centred at $x_{j}$ satisfying (\ref{test function phi at j}). Thus, this concludes that concentrations can not occur at $x_{j} \in \mathbb{R}^{N}$ with $j \in \mathcal{J} \cap \mathcal{K}^c$ or $x_{k} \in \mathbb{R}^{N}$ with $k \in \mathcal{K} \cap  \mathcal{J}^c$.
 
 Now, we test the functional $J'_{\nu}(u_{n},v_{n})$ with $(u_{n}\Psi_{j,\epsilon}, 0)$ with the assumption that $j \in \mathcal{J}\cap\mathcal{K}$ and we obtain

\begin{align} \label{three four two}
       0 &= \lim_{n \rightarrow +\infty} \big\langle J'_{\nu}(u_{n},v_{n})| (u_{n}\Psi_{j,\epsilon}, 0)\big\rangle \notag\\
       &= \lim_{n \rightarrow +\infty} \bigg( \iint_{\mathbb{R}^{2N} } \frac{|u_{n}(x)-u_{n}(y)|^{2}}{|x-y|^{N+2s}} \Psi_{j,\epsilon}(x, \mathrm{d}x \mathrm{d}y
       + \notag\\ &\quad+\iint_{\mathbb{R}^{2N} } \frac{(u_{n}(x)-u_{n}(y))(\Psi_{j,\epsilon}(x)-\Psi_{j,\epsilon}(y))}{|x-y|^{N+2s}} u_{n}(y)\,\mathrm{d}x \mathrm{d}y  -\lambda_{1} \int_{\mathbb{R}^{N}}\frac{u_{n}^{2}}{|x|^{2s}}\Psi_{j,\epsilon}(x)\, \mathrm{d}x -\notag\\ &\quad- \int_{\mathbb{R}^{N}} |u_{n}|^{2_{s}^{*}}\Psi_{j,\epsilon}(x)\,\mathrm{d}x -2 \nu \int_{\mathbb{R}^{N}} h(x)u_{n}^{2} v_{n}\Psi_{j,\epsilon}(x)\,\mathrm{d}x \bigg)\notag\\
       &= \int_{\mathbb{R}^{N}} \Psi_{j,\epsilon}\, \mathrm{d}\mu - \lambda_{1} \int_{\mathbb{R}^{N}}\Psi_{j,\epsilon}\, \mathrm{d}\gamma - \int_{\mathbb{R}^{N}}\Psi_{j,\epsilon}\, \mathrm{d}\rho - 2\nu \lim_{n \rightarrow + \infty}  \int_{\mathbb{R}^{N}} h(x)u_{n}^{2} v_{n}\Psi_{j,\epsilon}(x)\,\mathrm{d}x. 
\end{align} 
Further, we test the functional $J'_{\nu}(u_{n},v_{n})$ with $(0,v_{n}\Psi_{j,\epsilon})$ and we have the following
\begin{align} \label{three four three}
       0 &= \lim_{n \rightarrow +\infty} \big<\langle J'_{\nu}(u_{n},v_{n})| (0,v_{n}\Psi_{j,\epsilon})\big\rangle \notag\\
       &= \lim_{n \rightarrow +\infty} \bigg( \iint_{\mathbb{R}^{2N} } \frac{|v_{n}(x)-v_{n}(y)|^{2}}{|x-y|^{N+2s}} \Psi_{j,\epsilon}(x)\,\mathrm{d}x \mathrm{d}y
       +\\ &\quad +\iint_{\mathbb{R}^{2N} } \frac{(v_{n}(x)-v_{n}(y))(\Psi_{j,\epsilon}(x)-\Psi_{j,\epsilon}(y))}{|x-y|^{N+2s}} v_{n}(y)\,\mathrm{d}x \mathrm{d}y -\lambda_{1} \int_{\mathbb{R}^{N}}\frac{v_{n}^{2}}{|x|^{2s}}\Psi_{j,\epsilon}(x)\, \mathrm{d}x -\notag\\&\quad- \int_{\mathbb{R}^{N}} |v_{n}|^{2_{s}^{*}}\Psi_{j,\epsilon}(x)\,\mathrm{d}x - 2\nu  \int_{\mathbb{R}^{N}} h(x)u_{n}^{2} v_{n}\Psi_{j,\epsilon}(x)\,\mathrm{d}x \bigg) \notag\\
       &= \int_{\mathbb{R}^{N}} \Psi_{j,\epsilon}\,\mathrm{d}\Bar{\mu} - \lambda_{1} \int_{\mathbb{R}^{N}}\Psi_{j,\epsilon}\, \mathrm{d}\Bar{\gamma} - \int_{\mathbb{R}^{N}}\Psi_{j,\epsilon}\, \mathrm{d}\Bar{\rho} - 2\nu  \lim_{n \rightarrow + \infty}  \int_{\mathbb{R}^{N}} h(x)u_{n}^{2} v_{n}\Psi_{j,\epsilon}(x)\,\mathrm{d}x.
\end{align} 
By assumption $h \in L^{\infty}(\mathbb{R}^{N})$ and using the H\"{o}lder inequality (\ref{Holder inequality with phi at origin}), there exists some constant $\Tilde{C}>0$ such that the following inequality holds.
\begin{align}\label{three four four}
     \lim\limits_{\epsilon\rightarrow 0}\limsup\limits_{n \rightarrow +\infty}\int_{\mathbb{R}^{N}}h(x)u_{n}^{2} v_{n}\Psi_{j,\epsilon}\,\mathrm{d}x \leq \Tilde{C}\rho_{j}^{\frac{2}{2_{s}^{*}}} \Bar{\rho_{j}}^{{\frac{1}{2_{s}^{*}}}}.
 \end{align}
 Hence, by letting $\epsilon \rightarrow 0$, from (\ref{three four two}), (\ref{three four three}) and (\ref{three four four}) we get
 \begin{align} \label{368}
     \mu_{j} - \rho_{j} - 2\nu  \Tilde{C}\rho_{j}^{\frac{2}{2_{s}^{*}}} \Bar{\rho_{j}}^{{\frac{1}{2_{s}^{*}}}} \leq 0,\end{align}
     \begin{align} \label{369}
     \Bar{\mu}_{j} - \Bar{\rho}_{j} - \nu \Tilde{C}\rho_{j}^{\frac{2}{2_{s}^{*}}} \Bar{\rho_{j}}^{{\frac{1}{2_{s}^{*}}}} \leq 0.
 \end{align}
 Then, by (\ref{inequality with S}) together with \eqref{368} and \eqref{369}, we find
 \begin{align*}
     S\bigg(\rho_{j}^{\frac{2}{2_{s}^{*}}} + \Bar{\rho}_{j}^{\frac{2}{2_{s}^{*}}}\bigg) \leq \rho_{j} + \Bar{\rho_{j}} + 2_{s}^{*}\nu \Tilde{C}\rho_{j}^{\frac{2}{2_{s}^{*}}} \Bar{\rho_{j}}^{{\frac{1}{2_{s}^{*}}}}.
 \end{align*}
 Therefore,
 $$S\bigg( \rho_{j} + \Bar{\rho}_{j}\bigg)^{\frac{2}{2_{s}^{*}}} \leq (\rho_{j} + \Bar{\rho}_{j}) (1+ 2_{s}^{*}\nu \Tilde{C}).$$
consequently, we obtain that either $\rho_{j} + \Bar{\rho}_{j} = 0$ or $\rho_{j} + \Bar{\rho}_{j} \geq \bigg( \frac{S}{1+ 2_{s}^{*}\nu \Tilde{C}}\bigg)^{\frac{N}{2s}}$. If we have a concentration at some point $x_j$, then following the arguments of Lemma \ref{PS compactness lemma} we get
\begin{align*}
    c &\geq \frac{1}{6} \bigg(\mu_{j}+\Bar{\mu}_{j}\bigg) + \frac{6s -N}{6N} (\rho_{j}+\Bar{\rho}_{j})\\
    & \geq S \frac{1}{6} \bigg( \rho_{j} + \Bar{\rho}_{j}\bigg)^{\frac{2}{2_{s}^{*}}} + \frac{6s-N}{6N}(\rho_{j}+\Bar{\rho}_{j}) \\
    & \geq \frac{s}{N} \bigg( \frac{S}{1+ 2_{s}^{*}\nu \Tilde{C}}\bigg)^{\frac{N}{2s}} 
\end{align*}
If we assume that $\nu >0$ is sufficiently small, then
\begin{align*}
    c \geq \frac{s}{N} \bigg( \frac{S}{1+ 2_{s}^{*}\nu \Tilde{C}}\bigg)^{\frac{N}{2s}} \geq \frac{s}{N} \min \{ S(\lambda_{1}), S(\lambda_{2})\}^{\frac{N}{2s}},
\end{align*}
which gives a contradiction to the hypothesis on level $c$. This implies that $ \rho_{j} = 0 =\Bar{\rho}_{j}$. Hence, the result follows from \eqref{three four four}.
\end{proof}

\section{Proofs of main results}\label{S4}
In this section, we will give the proofs of main theorems of the article showing positive ground and bound state solutions of the system \eqref{main problem}.

\begin{proof} [Proof of theorem \ref{First theorem}]
The proposition \ref{Semitrivial solution as a minimizer} states that the pair $(0, z_{\mu,s_2}^{\lambda_2})$ is a saddle point of $J_\nu$ constrained on $\mathcal{N}_\nu$
for $\nu > \Bar{{\nu}}$. Moreover, $( z_{\mu,s_1}^{\lambda_1},0)$ is not a critical point of $J_\nu$ on $\mathcal{N}_\nu$. Therefore, we can write
\begin{align*} 
        \Tilde{c}_{\nu} &= \inf\limits_{(u,v) \in \mathcal{N}_{\nu}} J_{\nu}(u_{},v_{}) < \min\big\{ J_{\nu}(z_{\mu,s_{1}}^{\lambda_{1}},0), J_{\nu}(0,z_{\mu,s_{2}}^{\lambda_{2}}) \big\} 
\end{align*} and hence from above we get the following inequality
\begin{align} \label{four one}
        \Tilde{c}_{\nu} < \min\bigg\{\frac{s_{1}}{N}S^{\frac{N}{2s_{1}}}(\lambda_{1}), \frac{s_{2}}{N}S^{\frac{N}{2s_{2}}}(\lambda_{2})\bigg\}.
\end{align}
 For the case $N < \min\{6s_1,6s_2 \}$, we use Lemma \ref{PS compactness lemma} to ensure the existence of $(\Tilde{u},\Tilde{v}) \in \mathbb{D}$ such that $\Tilde{c}_{\nu} = J_{\nu}(\Tilde{u},\Tilde{v})$. Next, we will show that the functions $\Tilde{u}$ and $\Tilde{v}$ are indeed positive. { For that purpose, we follow the argument with $\alpha=2, \ \beta=1$ in \cite[Theorem 4.1, page 24]{RMS2022}. We define the function $\psi(t): (0, \infty) \rightarrow \mathbb{R}$ given by $\psi(t) = J_{\nu}(tu,tv),$ for all $t>0.$
Then $\psi'''(t)<0$ for all $t>0$. Therefore the function $\psi'(t)$ is strictly concave for $t>0$. Also, we have $\lim\limits_{t \rightarrow 0}\psi'(t) = 0$ and $\lim\limits_{t \rightarrow +\infty}\psi'(t) = - \infty$. Moreover, the function $\psi'(t)>0$ for $t>0$ small enough. Hence, $\psi'(t)$ has a unique global maximum point at $t=t_0$ and $\psi'(t)$ has a unique root at $t_1 > t_0$ and $\psi''(t)<0$ for $t>t_0$, in particular $\psi''(t_1)<0$. Also, from the equation \eqref{second order derivative} and $\psi(t)$, we observe that $(tu,tv) \in \mathcal{N}_\nu$ if and only if $\psi''(t)<0$.\\}
Now we consider the function $(|\Tilde{u}|,|\Tilde{v}|) \in \mathbb{D}$, then from the above arguments there exists a unique $t_2>0$ such that $t_2(|\Tilde{u}|,|\Tilde{v}|) = (t_2|\Tilde{u}|,t_2|\Tilde{v}|) \in \mathcal{N}_{\nu}$ and $t_2$ satisfies the following algebraic equation

\begin{align*} 
     \|(|\Tilde{u}|,|\Tilde{v}|)\|_{\mathbb{D}}^{2} =  t_2^{2_{s_{1}}^{*} -2} \|\Tilde{u}\|_{{2_{s_{1}}^{*}}}^{2_{s_{1}}^{*}} +  t_2^{2_{s_{2}}^{*} -2} \|\Tilde{v}\|_{{2_{s_{2}}^{*}}}^{2_{s_{2}}^{*}} + 3\nu t_2 \int_{\mathbb{R}^{N}} h(x) \Tilde{u}^{2} |\Tilde{v}|\,\mathrm{d}x.
\end{align*}
Also we know that $(\Tilde{u},\Tilde{v}) \in \mathcal{N}_{\nu}$, therefore 
\begin{align*}
\begin{split}
    \|(\Tilde{u},\Tilde{v})\|_{\mathbb{D}}^{2} 
    = \|\Tilde{u}\|_{{2_{s_{1}}^{*}}}^{2_{s_{1}}^{*}} + \|\Tilde{v}\|_{{2_{s_{2}}^{*}}}^{2_{s_{2}}^{*}} + 3\nu  \int_{\mathbb{R}^{N}} h(x) \Tilde{u}|^{2} \Tilde{v} \,\mathrm{d}x.
\end{split}
\end{align*}
Now from the inequality $\|(|\Tilde{u}|,|\Tilde{v}|)\|_{\mathbb{D}}^{} \leq  \|(\Tilde{u},\Tilde{v})\|_{\mathbb{D}}^{}$, one finds that $t_2 \leq 1.$ Since $(\Tilde{u},\Tilde{v}) \in \mathcal{N}_{\nu}$ and $(\Tilde{u},\Tilde{v})$ is the unique maximum point of $\psi(t) = J_{\nu}(t\Tilde{u},t\Tilde{v}), \forall ~t>t_0$. We can deduce that
\[\Tilde{c}_{\nu} = J_{\nu}(\Tilde{u},\Tilde{v}) = \max\limits_{t>t_0}J_{\nu}(t\Tilde{u},t\Tilde{v}) \geq J_{\nu}(t_2\Tilde{u},t_2\Tilde{v}) \geq J_{\nu}(t_2|\Tilde{u}|,t_2|\Tilde{v}|) \geq \Tilde{c}_{\nu}. \]  
Thus, we can assume that $\Tilde{u}\geq 0$ and $\Tilde{v}\geq 0$ in $\mathbb{R}^{N}$. Moreover, $\Tilde{u}, \Tilde{v}$ are not identically equal to $0$. On contrary, if we suppose that $\Tilde{u} \equiv 0$, then $\Tilde{v}$ is a solution to the problem (\ref{PDE with u zero}) and further $\Tilde{v} = z_{\mu,s_{2}}^{\lambda_{2}}$, which is not possible due to the inequality (\ref{four one}). Following the same argument for $\Tilde{v} \equiv 0$, we get a contradiction to (\ref{four one}).
Furthermore, we conclude that $\Tilde{u}>0$ and $\Tilde{v}>0$ using the maximum principle in $\mathbb{R}^{N} \backslash\{0\}$ \cite[Theorem 1.2]{Pezzo2017}. Hence, the existence of a positive ground state solution $(\Tilde{u},\Tilde{v}) \in \mathcal{N}_{\nu}$ is proved.\

In the case of $N = \min\{6s_1, 6s_2 \}$, we follow the same argument as in the subcritical case and part (i) of Lemma \ref{critical with h radial} to conclude the existence of a positive ground state $(\Tilde{u},\Tilde{v})$.
\end{proof}

\begin{proof} [Proof of theorem \ref{second theorem}]
Since $S^{\frac{N}{2s_{2}}}(\lambda_{2}) \geq S^{\frac{N}{2s_{1}}}(\lambda_{1}),~s_{2} \geq s_{1}$ and $(z_{\mu,s_{1}}^{\lambda_{1}},0)$ is not a critical point of $J_{\nu}$ on $\mathcal{N}_{\nu}$. we have
\[ \Tilde{c}_{\nu}< J_{\nu}(z_{\mu,s_{1}}^{\lambda_{1}},0) = \frac{s_{1}}{N}S^{\frac{N}{2s_{1}}}(\lambda_{1}) = \min\{\frac{s_{1}}{N} S^{\frac{N}{2s_{1}}}(\lambda_{1}),\frac{s_{2}}{N} S^{\frac{N}{2s_{2}}}(\lambda_{2}) \},\]
where $\Tilde{c}_{\nu}$ is defined by (\ref{ground state level}). In the case of $N< \min\{ 6s_1,6s_2\}$, the Lemma \ref{PS compactness lemma} ensures the existence $(\Tilde{u},\Tilde{v}) \in \mathcal{N}_{\nu}$ such that $\Tilde{c}_{\nu} = J_{\nu}(\Tilde{u},\Tilde{v})$. Now we can assume that $\Tilde{u},\Tilde{v}\geq 0$ and $\Tilde{u},\Tilde{v}$ are not identically $0$ using the same argument as in Theorem \ref{First theorem}. Then, we use the maximum principle by Pezzo and Quaas  \cite[Theorem 1.2]{Pezzo2017} to conclude that $(\Tilde{u},\Tilde{v})$ is a positive ground state solution of (\ref{main problem}).\\
In the case of $N = \min\{6s_1,6s_2 \}$ and $h$ radial, we use part (i) of Lemma \ref{critical with h radial} and deduce the existence of a positive ground state solution $(\Tilde{u},\Tilde{v})$ of (\ref{main problem}).\
\end{proof}

\begin{proof} [Proof of theorem \ref{third theorem}]
Let us assume by contradiction that there exists $\{\nu_{n}\} \searrow 0$ such that $\Tilde{c}_{\nu_{n}} < J_{\nu_{n}}(0, z_{\mu,s_{2}}^{\lambda_{2}})$. Since $S^{\frac{N}{2s_{1}}}(\lambda_{1})> S^{\frac{N}{2s_{2}}}(\lambda_{2}),s_{1} \geq s_{2}$, then
\begin{equation} \label{four two}
    \Tilde{c}_{\nu_{n}} < \min\{\frac{s_{1}}{N} S^{\frac{N}{2s_{1}}}(\lambda_{1}),\frac{s_{2}}{N} S^{\frac{N}{2s_{2}}}(\lambda_{2}) \} = \frac{s_{2}}{N} S^{\frac{N}{2s_{2}}}(\lambda_{2}),
\end{equation}
where $\Tilde{c}_{\nu_{n}}$ is given in (\ref{ground state level}) with $\nu = \nu_{n}$. If $N < \min\{ 6s_1,6s_2\}$, the (PS) condition holds by Lemma \ref{PS compactness lemma} at level $\Tilde{c}_{\nu_{n}}$. For the case $N = \min\{6s_1, 6s_2\},$ we apply part (i) of Lemma \ref{critical with h radial} for $h$ radial to reach the same conclusion. Hence, for each $n \in \mathbb{N}$, there exists $(\Tilde{u}_{n},\Tilde{v}_{n}) \in \mathbb{D}$ which solves \eqref{main problem} such that $\Tilde{c}_{\nu_{n}} = J_{\nu_{n}}(\Tilde{u}_{n},\Tilde{v}_{n})$.  By following the same argument as in Theorem \ref{First theorem}, we can assume that $\Tilde{u}_{n} \geq 0$ and $\Tilde{v}_{n} \geq 0$. Moreover, $\Tilde{u}_{n} \not\equiv 0$ and $\Tilde{v}_{n} \not\equiv 0$ in $\mathbb{R}^{N}$ as the assumption either  $\Tilde{u}_{n}\equiv 0$ or  $\Tilde{v}_{n}\equiv 0$  contradicts (\ref{four two}). Indeed, we can conclude by the maximum principle of Pezzo and Quaas  \cite[Theorem 1.2]{Pezzo2017} that $\Tilde{u}_{n} > 0$ and $\Tilde{v}_{n} > 0$ in $\mathbb{R}^{N} \backslash \{0\}$. Further, Let us define the following integrals
$$ \sigma_{s_{1},n} = \int_{\mathbb{R}^{N}} \Tilde{u}_{n}^{2_{s_{1}}^{*}}\,\mathrm{d}x~~~~~~~\text{and}~~~~~\sigma_{s_{2},n} = \int_{\mathbb{R}^{N}} \Tilde{v}_{n}^{2_{s_{2}}^{*}}\,\mathrm{d}x.$$
By (\ref{energy functional on Nehari manifold}), we obtain
\begin{equation} \label{four three}
    \Tilde{c}_{\nu_{n}} = J_{\nu_{n}}(\Tilde{u}_{n},\Tilde{v}_{n}) = \frac{s_{1}}{N} \sigma_{s_{1},n} + \frac{s_{2}}{N} \sigma_{s_{2},n} +  \bigg( \frac{\nu_{n}}{2}\bigg)\int_{\mathbb{R}^{N}}h(x)\Tilde{u}_{n}^{2} \Tilde{v}_{n}\,\mathrm{d}x.
\end{equation}
From (\ref{four two}), (\ref{four three}) and \eqref{condition on h}, we obtain that
\begin{equation} \label{four four}
    \sigma_{s_{1},n} + \sigma_{s_{2},n} < S^{\frac{N}{2s_{2}}}(\lambda_{2}).
\end{equation}
We combine the definition of $S(\lambda_1)$ with the first equation in the system \eqref{main problem} as we have given that $(\Tilde{u}_{n},\Tilde{v}_{n})$ is a solution to (\ref{main problem}). we deduce
\begin{equation} \label{four five}
    S(\lambda_{1}) (\sigma_{s_{1},n})^{\frac{N-2s_{1}}{N}} \leq \sigma_{s_{1},n} + 2\nu_{n} \int_{\mathbb{R}^{N}}h(x)\Tilde{u}_{n}^{2} \Tilde{v}_{n}^{}\,\mathrm{d}x.
\end{equation}
Now by the H\"older's inequality and the inequality (\ref{four four}), one finds that
\[\int_{\mathbb{R}^{N}}h(x)\Tilde{u}_{n}^{2} \Tilde{v}_{n}\,\mathrm{d}x \leq C(h) \bigg(\int_{\mathbb{R}^{N}}|\Tilde{u}_{n}|^{2_{s_{1}}^{*}}\,\mathrm{d}x\bigg)^{\frac{2}{2_{s_{1}}^{*}}} \bigg(\int_{\mathbb{R}^{N}}|\Tilde{v}_{n}|^{2_{s_{2}}^{*}}\,\mathrm{d}x\bigg)^{\frac{1}{2_{s_{2}}^{*}}} \hspace{8mm}\]
\[\leq  C(h) (S(\lambda_{2}))^{ \frac{N-2s_{2}}{4s_{2}}} (\sigma_{s_{1},n})^{\frac{N-2s_{1}}{N}}.\]
Using the above expression in (\ref{four five}), we get the following
\[ S(\lambda_{1}) (\sigma_{s_{1},n})^{\frac{N-2s_{1}}{N}} < \sigma_{s_{1},n} + 2\nu_{n}   C(h) (S(\lambda_{2}))^{ \frac{N-2s_{2}}{4s_{2}}} (\sigma_{s_{1},n})^{\frac{N-2s_{1}}{N}}.\]
Since $S^{\frac{N}{2s_{1}}}(\lambda_{1})> S^{\frac{N}{2s_{2}}}(\lambda_{2})$, then there exists $\epsilon>0$ such that 
\begin{equation} \label{four six}
    (1-\epsilon) S^{\frac{N}{2s_{1}}}(\lambda_{1}) \geq S^{\frac{N}{2s_{2}}}(\lambda_{2}).
\end{equation}
By using Lemma \ref{Abdelloui fractional version} with $\sigma = \sigma_{s_{1},n}$, there exists a $\nu_0 = \nu_0(\epsilon)>0$ such that 
\begin{equation*}
    \sigma_{s_{1},n} \geq (1-\epsilon) S^{\frac{N}{2s_{1}}}(\lambda_{1}) ~~\text{for~any}~0<\nu_{n}<\nu_0.
\end{equation*}
By using (\ref{four six}), one finds that $\sigma_{s_{1},n} > S^{\frac{N}{2s_{2}}}(\lambda_{2}),$ which gives a contradiction to the inequality (\ref{four four}). Hence, we have  
\begin{equation}\label{four eight}
     \Tilde{c}_{\nu_{}} = \frac{s_{2}}{N} S^{\frac{N}{2s_{2}}}(\lambda_{2}) = J_{\nu_{}}(0, z_{\mu,s_{2}}^{\lambda_{2}}),
\end{equation}
provided $\nu$ sufficiently small.
Thus, the pair $(0, z_{\mu,s_{2}}^{\lambda_{2}})$ is a ground state of (\ref{main problem}) for $\nu$ small enough.
\end{proof}

\begin{proof} [Proof of theorem \ref{analogus 1}]
The proof is direct by using the approach of Theorem \ref{second theorem} and Lemma \ref{critical case with h non radial}.
\end{proof}

\begin{proof} [Proof of theorem \ref{analogous 2}]
The proof follows the approach of Theorem \ref{third theorem} and Lemma \ref{critical case with h non radial}.
\end{proof}

\begin{proof} [Proof of theorem \ref{Bound state}]
  We start with constructing a Mountain pass level so that the functional $J_{\nu}^{+}$ restricted on ${\mathcal{N}_{\nu}^{+}}$ satisfies the Mountain pass geometry, and the Palais-Smale condition is also satisfied at this level. 
So we consider the set of paths connecting continuously $(z_{\mu,s}^{\lambda_{1}},0)$ to $(0,z_{\mu,s}^{\lambda_{2}})$, namely
\[ \Sigma_{\nu} = \big\{ \varphi(t) = (\varphi_{1}(t),\varphi_{2}(t)) \in C^{0}([0,1],\mathcal{N}_{\nu}^{+}): \varphi(0)=(z_{\mu,s}^{\lambda_{1}},0)~\text{and}~\varphi(1)= (0,z_{\mu,s}^{\lambda_{2}}) \big\},\]
and define the associated Mountain pass level as
$$ \mathcal{C}_{\mathcal{MP}} = \inf\limits_{\varphi \in \Sigma_{\nu}} \max\limits_{t \in [0,1]} J_{\nu}^{+}(\varphi(t)).$$
Assumption (\ref{inequalities in bound state theorem}) implies that
\begin{equation} \label{Separability condition implies}
  \frac{2s}{N}S^{\frac{N}{2s}}(\lambda_{2}) > \frac{s}{N}S^{\frac{N}{2s}}(\lambda_{1}).   
\end{equation}
Further, by the monotonicity of $S(\lambda)$, we can choose $\epsilon>0$ sufficiently small such that
\begin{equation} \label{four ten}
    \frac{2s}{N}(1-\epsilon) \bigg( \frac{S(\lambda_{1})+S(\lambda_{2})}{2}\bigg)^{\frac{N}{2s}} > \frac{2s}{N}S^{\frac{N}{2s}}(\lambda_{2}) > \frac{s(1+\epsilon)}{N}S^{\frac{N}{2s}}(\lambda_{1}).
\end{equation}
Now we claim the existence of a $\nu_0 = \nu_0(\epsilon)>0$ such that the following inequality 
\begin{equation} \label{four elevan}
    \max\limits_{t \in [0,1]} J_{\nu}^{+}(\varphi(t)) \geq \frac{2s}{N}(1-\epsilon) \bigg( \frac{S(\lambda_{1})+S(\lambda_{2})}{2}\bigg)^{\frac{N}{2s}} ~\text{with}~\varphi\in \Sigma_{\nu},
\end{equation}
holds for every $0<\nu<\nu_0$.
If $\varphi=(\varphi_{1},\varphi_{2}) \in \Sigma_{\nu}$, then by using identity (\ref{equivalent norm for modified problem}), we have
\begin{align} \label{four one two}
    \begin{split}
        \iint_{\mathbb{R}^{2N}} \frac{|\varphi_{1}(x)-\varphi_{1}(y)|^{2} + |\varphi_{2}(x)-\varphi_{2}(y)|^{2}}{|x-y|^{N+2s_{}}} \,\mathrm{d}x \mathrm{d}y - \lambda_{1} \int_{\mathbb{R}^{N}}\frac{\varphi_{1}^{2}}{|x|^{2s}}\,\mathrm{d}x - \lambda_{2} \int_{\mathbb{R}^{N}}\frac{\varphi_{2}^{2}}{|x|^{2s}}\,\mathrm{d}x\\
        = \int_{\mathbb{R}^{N}} \big( (\varphi_{1}^{+}(t))^{2_{s}^{*}} + (\varphi_{2}^{+}(t))^{2_{s}^{*}}\big)\, \mathrm{d}x +3 \nu \int_{\mathbb{R}^{N}}h(x)(\varphi_{1}^{+}(t))^{2} \varphi_{2}^{+}(t))\,\mathrm{d}x,
    \end{split}
\end{align}
and we use (\ref{restricted functonal on nehari manifold of modified problem}) to get
\begin{align}\label{four one three}
\begin{split}
    J_{\nu}^{+}(\varphi(t)) = \frac{s}{N} \int_{\mathbb{R}^{N}} \big( (\varphi_{1}^{+}(t))^{2_{s}^{*}} + (\varphi_{2}^{+}(t))^{2_{s}^{*}}\big)\, \mathrm{d}x \hspace{3cm}\\
    +  \bigg( \frac{\nu}{2} \bigg)\int_{\mathbb{R}^{N}}h(x)(\varphi_{1}^{+}(t))^{2} \varphi_{2}^{+}(t)\,\mathrm{d}x.
\end{split}
\end{align}
Now we define $\sigma_{s}(t) = (\sigma_{1,s}(t),\sigma_{2,s}(t))$ with $\sigma_{j,s}(t) = \int_{\mathbb{R}^{N}}(\varphi_{j}^{+}(t))^{2_{s}^{*}}\, \mathrm{d}x$ for $j = 1,2$. Observe that if $\sigma_{j,s}^{}(t) > 2 S^{\frac{N}{2s}}(\lambda_{j})$, then (\ref{four elevan}) holds. Therefore, we assume that $\sigma_{j,s}^{}(t) \leq 2 S^{\frac{N}{2s}}(\lambda_{j})$, $j =1,2$ for all $t \in [0,1]$. We combine the definition of $S(\lambda)$ with (\ref{four one two}) and obtain \begin{small}
\begin{align}\label{four one four}
    \begin{split}
        S(\lambda_{1})(\sigma_{1,s}(t))^{\frac{N-2s}{N}} + S(\lambda_{2})(\sigma_{2,s}(t))^{\frac{N-2s}{N}} &\leq \iint_{\mathbb{R}^{2N}} \frac{|\varphi_{1}(x)-\varphi_{1}(y)|^{2} + |\varphi_{2}(x)-\varphi_{2}(y)|^{2}}{|x-y|^{N+2s_{}}}\, \mathrm{d}x \mathrm{d}y \\
       & \qquad - \lambda_{1} \int_{\mathbb{R}^{N}}\frac{\varphi_{1}^{2}}{|x|^{2s}}\,\mathrm{d}x - \lambda_{2} \int_{\mathbb{R}^{N}}\frac{\varphi_{2}^{2}}{|x|^{2s}}\,\mathrm{d}x \hspace{3cm}\\
        &= \sigma_{1,s}(t) + \sigma_{2,s}(t)
        +3\nu \int_{\mathbb{R}^{N}}h(x)(\varphi_{1}^{+}(t))^{2}\varphi_{2}^{+}(t)\,\mathrm{d}x. 
    \end{split}
\end{align}\end{small}
Using the H\"older’s inequality, we get
\begin{equation} \label{four one five}
    \int_{\mathbb{R}^{N}}h(x)(\varphi_{1}^{+}(t))^{2} \varphi_{2}^{+}(t)) \, \mathrm{d}x \leq  ~C(h) (\sigma_{1,s}(t))^{\frac{N-2s}{N}} (\sigma_{2,s}(t))^{\frac{N-2s}{2N}}.
\end{equation}
Also, by the definition of $\Sigma_\nu$ and since $\varphi=(\varphi_{1},\varphi_{2}) \in \Sigma_{\nu}$, we have
\[ \sigma_{s}(0) = \bigg( \int_{\mathbb{R}^{N}} (z_{\mu,s}^{\lambda_{1}})^{2_{s}^{*}}\,\mathrm{d}x,0\bigg)~~~~\text{and}~~~\sigma_{s}(1) = \bigg(0, \int_{\mathbb{R}^{N}} (z_{\mu,s}^{\lambda_{2}})^{2_{s}^{*}}\, \mathrm{d}x\bigg).\]
Thus, by the continuity of $\sigma_{s}$, there is a $t_0 \in (0,1)$ such that $\sigma_{1,s}(t_0) = \Tilde{\sigma}_{s} = \sigma_{2,s}(t_0)$. Combining (\ref{four one four}) and (\ref{four one five}), and taking $t=t_0$, we deduce the following
\[ (S(\lambda_{1})+ S(\lambda_{2}))\Tilde{\sigma}_{s}^{\frac{N-2s}{N}} \leq 2\Tilde{\sigma}_{s} +3C\nu \Tilde{\sigma}_{s}^{\frac{3}{2}\frac{N-2s}{N}}.\]
Now by using Lemma \ref{Abdelloui fractional version}, there exists a $\nu_0 = \nu_0(\epsilon)$ such that
\begin{equation} \label{four one six}
    \Tilde{\sigma}_{s} \geq (1-\epsilon) \bigg( \frac{S(\lambda_{1})+S(\lambda_{2})}{2}\bigg)^{\frac{N}{2s}}~~~\text{for~every}~0<\nu\leq \nu_0.
\end{equation}
Consequently, we combine (\ref{four one three}) and (\ref{four one six}) to get 
\[\max\limits_{t \in [0,1]} J_{\nu}^{+}(\varphi(t)) \geq \frac{s}{N}(\sigma_{1,s}(t_0)+\sigma_{2,s}(t_0)) \geq \frac{2s}{N}(1-\epsilon) \bigg( \frac{S(\lambda_{1})+S(\lambda_{2})}{2}\bigg)^{\frac{N}{2s}},\]
which proves claim (\ref{four elevan}). Moreover, by (\ref{four ten}) and (\ref{four elevan}), one can state that
\begin{equation} \label{four one seven}
    \mathcal{C}_{\mathcal{MP}} > \frac{s(1+\epsilon)}{N}S^{\frac{N}{2s}}(\lambda_{1}) = (1+\epsilon) J_{\nu}^{+}(z_{\mu,s}^{\lambda_{1}},0).
\end{equation}
Thus, the functional $J_{\nu}^{+}$ admits a Mountain-Pass-geometry on $\mathcal{N}_{\nu}$.

Now we show that the Palais-Smale compactness condition is satisfied at the Mountain pass level $\mathcal{C}_{\mathcal{MP}}$. We consider $\varphi(t) = (\varphi_{1}(t),\varphi_{2}(t)) = \big((1-t)^{1/2}z_{\mu,s}^{\lambda_{1}}, t^{1/2}z_{\mu,s}^{\lambda_{2}}\big)$ for $t \in [0,1]$. By the definition of the Nehari manifold, there exists a continuous positive function $\eta: [0,1] \rightarrow (0, +\infty
)$ such that the $\eta\varphi \in \mathcal{N}_{\nu}^{} \cap \mathcal{N}_{\nu}^{+}$ for $t \in [0,1]$. We notice that $\eta(0) = \eta(1) =1$.\

Now we define
\[ \sigma_{s}(t) = (\sigma_{1,s}(t),\sigma_{2,s}(t)) = \bigg(\int_{\mathbb{R}^{N}}(\eta\varphi_{1}^{}(t))^{2_{s}^{*}}\,\mathrm{d}x,~ \int_{\mathbb{R}^{N}}(\eta\varphi_{2}^{}(t))^{2_{s}^{*}}\,\mathrm{d}x \bigg).\]
Then, we have
\begin{equation}\label{four one eight}
    \sigma_{1,s}(0) = \int_{\mathbb{R}^{N}} (z_{\mu,s}^{\lambda_{1}})^{2_{s}^{*}}\,\mathrm{d}x = S^{\frac{N}{2s}}(\lambda_{1})~~~\text{and}~~~\sigma_{2,s}(1) = \int_{\mathbb{R}^{N}} (z_{\mu,s}^{\lambda_{2}})^{2_{s}^{*}}\,\mathrm{d}x = S^{\frac{N}{2s}}(\lambda_{2}).
\end{equation}
Since $\eta\varphi(t) \in \mathcal{N}_{\nu}^{+} \cap \mathcal{N}_{\nu}^{}$, by using the algebraic equation (\ref{Algebraic equation}), we obtain
\begin{align*}
    \|\big((1-t)^{1/2}z_{\mu,s}^{\lambda_{1}}, t^{1/2}z_{\mu,s}^{\lambda_{2}}\big)\|_{\mathbb{D}}^{2} = \eta^{2_{s}^{*} -2}(t) \big((1-t)^{2_{s}^{*}/2}\sigma_{1,s}(0) + t^{2_{s}^{*}/2}\sigma_{2,s}(1)\big) \hspace{3cm}\\
    + 3\nu \eta(t) (1-t) t^{1/2} \int_{\mathbb{R}^{N}}h(x)(z_{\mu,s}^{\lambda_{1}})^{2} z_{\mu,s}^{\lambda_{2}}\,\mathrm{d}x,
\end{align*}
and therefore,
\begin{equation}\label{four one nine}
    \eta^{2_{s}^{*} -2}(t) < \frac{\|(\varphi_{1}(t), \varphi_{2}(t))\|_{\mathbb{D}}^{2}}{ \int_{\mathbb{R}^{N}} \big( (\varphi_{1}^{}(t))^{2_{s}^{*}} + (\varphi_{2}^{}(t))^{2_{s}^{*}}\big) \,\mathrm{d}x} = \frac{(1-t)\sigma_{1,s}(0) + t\sigma_{2,s}(1)}{(1-t)^{2_{s}^{*}/2}\sigma_{1,s}(0) + t^{2_{s}^{*}/2}\sigma_{2,s}(1)}
\end{equation}
for every $t \in (0,1)$. It is followed by the definition of $\eta$, (\ref{two one four}) and (\ref{four one nine}) that
\begin{align} \label{four two zero}
    \begin{split}
        J_{\nu}^{+}(\eta\varphi(t)) &= \frac{1}{6}\|\eta\varphi(t)\|_{\mathbb{D}}^{2}  + \frac{6s-N}{6N} \eta^{2_{s}^{*}}(t) \bigg(\int_{\mathbb{R}^{N}} \big( (\varphi_{1}^{}(t))^{2_{s}^{*}} + (\varphi_{2}^{}(t))^{2_{s}^{*}}\big)\, \mathrm{d}x \bigg)\\
       & =  \frac{\eta^{2}(t)}{6} [(1-t)\sigma_{1,s}(0) + t\sigma_{2,s}(1)] + \frac{6s-N}{6N} \eta^{2_{s}^{*}}(t) [(1-t)^{2_{s}^{*}/2}\sigma_{1,s}(0) + t^{2_{s}^{*}/2}\sigma_{2,s}(1)] \\
       & < \frac{s\eta^{2_{}^{}}(t)}{N} [(1-t)\sigma_{1,s}(0) + t\sigma_{2,s}(1)] \hspace{4.7cm}
    \end{split}
\end{align}
Then, by (\ref{four one nine}) and (\ref{four two zero}), and for every $t \in (0,1)$, we obtain that
\[ J_{\nu}^{+}(\eta\varphi(t)) < G(t) : = \frac{s}{N}[(1-t)\sigma_{1,s}(0) + t\sigma_{2,s}(1)] \bigg[\frac{(1-t)\sigma_{1,s}(0) + t\sigma_{2,s}(1)}{(1-t)^{2_{s}^{*}/2}\sigma_{1,s}(0) + t^{2_{s}^{*}/2}\sigma_{2,s}(1)} \bigg]^{\frac{N-2s}{2s}}.\]
Clearly, the function $G(t)$ is maximum at point $t = \frac{1}{2}$. Also, from (\ref{four one eight}), we have
\[ G \bigg(\frac{1}{2}\bigg) = \frac{s}{N}(\sigma_{1,s}(0)+ \sigma_{2,s}(1)) = \frac{s}{N} ( S^{\frac{N}{2s}}(\lambda_{1}) + S^{\frac{N}{2s}}(\lambda_{2})).\]
we conclude 
$$\mathcal{C}_{\mathcal{MP}} \leq \max\limits_{t \in [0,1]} J_{\nu}^{+}(\eta\varphi(t)) < \frac{s}{N} ( S^{\frac{N}{2s}}(\lambda_{1}) + S^{\frac{N}{2s}}(\lambda_{2})).$$
If $S^{\frac{N}{2s}}(\lambda_{1})>S^{\frac{N}{2s}}(\lambda_{2})$, using the separability condition (\ref{Separability condition implies}) and the inequality (\ref{four one seven}), it follows that
$$ \frac{s}{N} S^{\frac{N}{2s}}(\lambda_{2}) < \frac{s}{N} S^{\frac{N}{2s}}(\lambda_{1})<\mathcal{C}_{\mathcal{MP}}< \frac{s}{N} ( S^{\frac{N}{2s}}(\lambda_{1}) + S^{\frac{N}{2s}}(\lambda_{2}))< \frac{3s}{N} S^{\frac{N}{2s}}(\lambda_{2}).$$
From the above expression, it is clear that the Mountain pass level $\mathcal{C}_{\mathcal{MP}}$ satisfies the assumptions of Lemma \ref{PS compactness lemma second} and Lemma \ref{critical with h radial}. Therefore, by the Mountain-Pass theorem, the functional $J_{\nu}^{+}|_{\mathcal{N}_{\nu}^{+}}$ admits a Palais-Smale sequence $\{(u_n,v_n)\} \subset \mathcal{N}_{\nu}^{+}$ at level $\mathcal{C}_{\mathcal{MP}}$. Moreover, by Lemma \ref{PS compactness lemma second}, $(u_n,v_n) \rightarrow (\Tilde{u},\Tilde{v})$ in $\mathbb{D}$.
Indeed,  $(\Tilde{u},\Tilde{v})$ is also a critical point of $J_{\nu}^{+}$ on $\mathbb{D}$. Further, we have $\Tilde{u},\Tilde{v}\geq 0$ in $\mathbb{R}^{N}$ and $\Tilde{u},\Tilde{v} \neq (0,0)$. Indeed, we can conclude by the maximum principle of Pezzo and Quaas  \cite[Theorem 1.2]{Pezzo2017} that $\Tilde{u}_{} > 0$ and $\Tilde{v}_{} > 0$ in $\mathbb{R}^{N} \backslash \{0\}$. Hence, $(\Tilde{u},\Tilde{v})$ is a bound state solution to the system \eqref{main problem}.
To deal with the critical case, i.e., $N = 6s$, we follow the same approach for the compactness of the Palais-Smale sequence using Lemma \ref{critical with h radial}.
\end{proof}
\section*{Acknowledgments}
RK wants to thank the support of the CSIR fellowship, file no. 09/1125(0016)/2020--EMR--I for his Ph.D. work. TM acknowledges the support of the Start up Research Grant from DST-SERB, sanction no. SRG/2022/000524. AS was supported by the DST-INSPIRE Grant DST/INSPIRE/04/2018/002208. Last but not least, RK would like to extend his sincere thanks to Dr. Alejandro Ortega (UC3M) for some helpful discussions. 
\bibliography{Ref}
\bibliographystyle{abbrv}
\begin{enumerate}
    \item[E-mail:] kumar.174@iitj.ac.in
    \item[E-mail:] tuhina@iitj.ac.in
    \item[E-mail:] abhisheks@iitj.ac.in
\end{enumerate}
\end{document}